\numberwithin{equation}{section}
\newcommand{\Qq}{\mathbb{Q}}
\newcommand{\Rr}{\mathbb{R}}
\newcommand{\Center}{\operatorname{center}}
\newcommand{\Exc}{\operatorname{Exc}}
\newcommand{\mld}{{\rm{mld}}}
\newcommand{\Supp}{\operatorname{Supp}}
\newcommand{\mult}{\operatorname{mult}}
\newcommand{\Dd}{\mathcal{D}}
\newtheorem{thm}{Theorem}[section]
\newtheorem{lem}[thm]{Lemma}
\newtheorem{claim}[thm]{Claim}
\theoremstyle{definition}
\newtheorem{defn}[thm]{Definition}
\newtheorem{ques}[thm]{Question}
\theoremstyle{definition}
\newtheorem{rem}[thm]{Remark}
\newtheorem{deflem}[thm]{Definition-Lemma}
\newtheorem{ex}[thm]{Example}
\theoremstyle{definition}
\begin{document}

\title{Divisors computing minimal log discrepancies on lc surfaces}
\author{Jihao Liu and Lingyao Xie}

\address{Department of Mathematics, The University of Utah, Salt Lake City, UT 84112, USA}
\email{lingyao@math.utah.edu}

\address{Department of Mathematics, The University of Utah, Salt Lake City, UT 84112, USA}
\email{jliu@math.utah.edu}

\subjclass[2010]{Primary 14E30, 
Secondary 14B05.}
\date{\today}

\begin{abstract}
Let $(X\ni x,B)$ be an lc surface germ. If $X\ni x$ is klt, we show that there exists a divisor computing the minimal log discrepancy of $(X\ni x,B)$ that is a Koll\'ar component of $X\ni x$. If $B\not=0$ or $X\ni x$ is not Du Val, we show that any divisor computing the minimal log discrepancy of $(X\ni x,B)$ is a potential lc place of $X\ni x$.
\end{abstract}

\maketitle

\tableofcontents

\section{Introduction}

The \emph{minimal log discrepancy (mld)} is an invariant that provides a sophisticated measure of the singularities of an algebraic variety. It not only plays an important role in the study of singularities but is also a central object in the minimal model program. Shokurov proved that the ascending chain condition (ACC) conjecture for mlds and the lower-semicontinuity (LSC) conjecture for mlds imply the termination of flips \cite{Sho04}. For papers related to these conjectures, we refer the readers to \cite{Sho96,Amb99,Sho00,EMY03,EM04,Sho04,Kaw11,Kaw14,Kaw15,MN18,Nak16a,Nak16b,Kaw18,Liu18,HLQ20,CH20,HL20,NS20}.

Very recently, there has been studies on the mld from the perspective of K-stability theory. In particular, in \cite{HLS19,HLQ20}, \emph{normalized volumes} \cite{Li18} and \emph{Koll\'ar components} (in some references, called \emph{reduced components}) have played essential roles to prove some important cases of the ACC conjecture for mlds. Since the structure of the Koll\'ar components are very well-studied \cite{Sho96,Pro00,Kud01,Xu14,LX20}, we may propose the following natural folklore question:

\begin{ques}\label{ques: div compute mld is kc}
Let $(X\ni x,B)$ be an lc germ of dimension $\geq 2$ such that $X\ni x$ is klt. Under what conditions will there exist a divisor $E$ over $X\ni x$  such that $a(E,X,B)=\mld(X\ni x,B)$ and $E$ is a Koll\'ar component of $X\ni x$?
\end{ques}

In the paper, we show that Question \ref{ques: div compute mld is kc} always has a positive answer in dimension $2$:

\begin{thm}\label{thm: dim 2 compute mld is kc}
Let $(X\ni x,B)$ be an lc surface germ such that $X\ni x$ is klt. Then there exists a divisor $E$ over $X\ni x$ such that $a(E,X,B)=\mld(X\ni x,B)$ and $E$ is a Koll\'ar component of $X\ni x$.
\end{thm}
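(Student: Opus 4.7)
The strategy is to realize the desired Koll\'ar component by contracting all but one exceptional curve of a log resolution, and then verify the plt condition from the structure of the dual graph.

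First, take the minimal log resolution $f\colon Y\to X$ of $(X\ni x,B)$ with $f$-exceptional prime divisors $E_1,\ldots,E_n$ over $x$, and compute $a_i:=a(E_i,X,B)$. A direct calculation shows that blowing up a node $p=E_a\cap E_b$ not lying on the strict transform of $B$ produces a new exceptional divisor $F$ with $a(F,X,B)=a_a+a_b$, and a similar monotonicity holds in general. Hence further blowups cannot strictly decrease log discrepancies, and $\mld(X\ni x,B)=\min_i a_i$, achieved by some $E:=E_{i_0}$. Now contract all other $E_j$'s on $Y$: by Mumford's theorem their intersection matrix is negative definite, and hence the contractibility criterion for negative-definite configurations on normal surfaces yields a projective birational morphism $\pi\colon Z\to X$ with $\Exc(\pi)=E$; the same negative definiteness forces $-E$ to be $\pi$-ample. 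To show $E$ is a Koll\'ar component of $X\ni x$, the only remaining condition is that $(Z,E)$ be plt.

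Under the standard dictionary between surface singularities and their weighted dual graphs, $(Z,E)$ is plt at a singular point of $Z$ lying on $E$ if and only if the corresponding connected component of $\Gamma_f\setminus\{E\}$ is a chain of smooth rational curves---the contracted singularity is then cyclic quotient and plt-compatible with $E$ passing through it. For klt surface singularities, the dual graph of the minimal resolution is either a chain (cyclic quotient case) or a tree with a unique valence-three branch vertex $V$. Consequently, the ``chain-friendly'' vertices of $\Gamma_f$ (those whose complement is a disjoint union of chains) are every vertex in the chain case, and exactly $V$ together with its three neighbors in the branched case.

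The main obstacle is therefore to show that some mld-computing divisor lies at a chain-friendly position, possibly after a further refinement of $f$. I would address this via case analysis on $\Gamma_f$, exploiting the linear relations $(K_Y+B_Y)\cdot E_i=0$ that globally constrain the $a_i$. When the minimum is attained only at a non-chain-friendly vertex, one expects to produce a chain-friendly mld-computing divisor by further modifying $Y$: either by blowing up a node adjacent to a vertex with small log discrepancy (approaching an lc place), which can create a new exceptional divisor sharing the mld but placed at a different graph position, or by using the freedom afforded by the boundary $B$ to shift the location of the minimum. Carrying out this combinatorial analysis, while bookkeeping the contribution of $B$---which may itself introduce additional branching in $\Gamma_f$ beyond the minimal resolution of $X$---is the technical heart of the argument.
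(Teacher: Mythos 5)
Your key criterion is incorrect, and the step you defer is the actual content of the theorem. You assert that $(Z,E)$ is plt at a contracted point if and only if the corresponding connected component of $\Gamma_f\setminus\{E\}$ is a chain of smooth rational curves. Chain-ness is necessary but not sufficient: one also needs the strict transform of $E$ to meet that chain at a \emph{tail}, which is precisely the dichotomy between Lemma \ref{lem: a type is plt} and Lemma \ref{lem: d type is lc not plt} (cf. \cite[Theorem 4.15]{KM98}). Concretely, let $X\ni x$ be a $D_4$ Du Val singularity and $B=0$, so every exceptional curve of the minimal resolution computes $\mld=1$, and take $E$ to be one of the three tails. Removing $E$ leaves a single chain of three $(-2)$-curves, but $E$ is attached to its \emph{middle} vertex; contracting that chain gives an $A_3$ point $z\in Z$ with $E\ni z$, and the crepant pullback of $K_Z+E$ has coefficient $1$ along the middle curve, so $(Z,E)$ is lc but not plt and $E$ is not a Koll\'ar component. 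Thus your list of ``chain-friendly'' vertices (the fork together with its three neighbors) is wrong: by Theorem \ref{thm: de type kc} the fork is the \emph{only} mld-computing Koll\'ar component in the $D/E$ case, while your criterion would certify its neighbors, and your argument would stop at a tail of $D_4$ and reach a false conclusion.

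Even with the corrected criterion, the ``technical heart'' you leave as a plan is exactly what must be proved. One needs (i) that for $D/E$-type plt germs the mld is computed by a divisor on the minimal resolution and is attained at the fork; this is Lemmas \ref{lem: de type mld reached at minimal resolution} and \ref{lem: Discrepancy of the fork is minimal}, proved via the negativity comparison of Lemma \ref{lem: intersection number compare coefficients}, not by ``blowing up a node adjacent to a vertex with small log discrepancy'' (such blow-ups only give sums $a_a+a_b$ and do not relocate the minimum); and (ii) a treatment of the non-plt case $\mld(X\ni x,B)=0$, where the log resolution of $(X,B)$ can have essentially arbitrary branching coming from $B$; the paper sidesteps this combinatorics by perturbing to $(X,(1-\epsilon)B)$, applying the plt case, and checking the resulting Koll\'ar component still computes $\mld=0$ (Theorem \ref{thm: lc not klt kc}), whereas your sketch offers no substitute. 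There are also smaller gaps: if $(X\ni x,B)$ is log smooth your minimal log resolution has no exceptional divisor over $x$ (this case needs Theorem \ref{thm: Kaw17 1.1} or a separate argument), and the existence of the contraction $\pi\colon Z\to X$ in the quasi-projective category is better justified by MMP extraction of a divisor with log discrepancy at most $1$ than by Grauert/Artin contractibility. As written, the proposal combines a false plt criterion with an unproven central lemma, so it does not establish the theorem.
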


Regrettably, Question \ref{ques: div compute mld is kc} does not always have a positive answer in dimension $\geq 3$ even when $B=0$ due to Example \ref{ex: threefold mld not kc}.

For smooth surfaces, a modified version of Question \ref{ques: div compute mld is kc} was proved by Blum \cite[Theorem 1.2]{Blu16} and Kawakita \cite[Remark 3]{Kaw17}, who show that any divisor computing the mld is a potential lc place (see Definition \ref{defn: potential lc place} below) of the ambient variety, while Kawakita additionally shows that any such divisor is achieved by a weighted blow-up \cite[Theorem 1]{Kaw17}. With this in mind, we may ask the following folklore  question:

\begin{ques}\label{ques: div compute mld are lcp}
Let $(X\ni x,B)$ be an lc germ. Under what conditions will every divisor that computes the mld be a potential lc place?
\end{ques}

In this paper, we also answer Question \ref{ques: div compute mld are lcp} for surfaces:

\begin{thm}\label{thm: when all divisors computing mld are potential lc places}
Let $(X\ni x,B)$ be an lc surface germ. Then every divisor $E$ over $X\ni x$ such that $a(E,X,B)=\mld(X\ni x,B)$ is a potential lc place of $X\ni x$ if and only if $(X\ni x,B)$ is \textbf{not} of the following types:
\begin{enumerate}
    \item $B=0$ and $X\ni x$ is a $D_m$-type Du Val singularity for some integer $m\geq 5$, or
    \item $B=0$ and $X\ni x$ is an $E_m$-type Du Val singularity for some integer $m\in\{6,7,8\}$.
\end{enumerate}
\end{thm}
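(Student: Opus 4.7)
The plan is to work on the minimal resolution $\pi : Y \to X$ of the surface germ and analyze the weighted dual graph $\mathcal{G}$ of the exceptional locus, splitting into cases according to the type of the singularity.

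For the ``if'' direction, I would first observe that if $(X\ni x, B)$ is not klt, then $\mld(X\ni x, B) = 0$ and any divisor $E$ computing the mld is an lc place of $(X,B)$ itself, hence a potential lc place with $B' = B$. Next, if $X\ni x$ is klt and either $X\ni x$ is not Du Val or $B \neq 0$, the pullback $K_Y + B_Y = \pi^*(K_X + B)$ has some $E_j$ appearing with strictly positive coefficient in $B_Y$; a negativity-lemma argument on $\mathcal{G}$ then forces the mld-computing divisors to lie at ``extremal'' positions of the graph. For each such divisor, Theorem \ref{thm: dim 2 compute mld is kc} combined with a direct construction of $B'$ using general curves on $X$ through $x$ with appropriate tangent directions produces the required witness that the divisor is a potential lc place. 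Finally, when $X\ni x$ is Du Val and $B = 0$, the mld equals $1$ and the mld-computing divisors are precisely the exceptional components on any crepant model; for types $A_n$ (all $n$) and $D_4$, a direct verification using the structure of the intersection matrix of $\mathcal{G}$ exhibits an effective boundary realizing each such divisor as an lc place.

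For the ``only if'' direction, for each excluded type I would construct an explicit mld-computing divisor that is not a potential lc place. For $D_m$ with $m \geq 5$, take $E_{i_0}$ to be the unique trivalent vertex of $\mathcal{G}$; for $E_6, E_7, E_8$ take a specific internal vertex chosen to fail the combinatorial criterion below. Assume $E_{i_0}$ is a potential lc place via some $(X, B')$ and pull back to $Y$ to obtain $K_Y + E_{i_0} + \sum_{j \neq i_0} a_j E_j + \widetilde{B'} = \pi^*(K_X + B')$ with $0 \leq a_j \leq 1$ and $\widetilde{B'} \geq 0$. Intersecting with each $E_k$ and using $K_Y = \pi^* K_X$ on Du Val singularities gives a system of linear inequalities on the Cartan-type intersection matrix of $\mathcal{G}$ which, on each excluded graph, has no admissible solution with $a_{i_0} = 1$, yielding the required contradiction.

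The main obstacle is the combinatorial analysis for the Du Val $D$ and $E$ graphs: one must enumerate all divisors computing the mld (including those arising from further crepant blow-ups over the minimal resolution) and, for each, solve the associated constrained linear system on a Cartan-type matrix. A subsidiary difficulty is uniformly treating divisors that do not appear on the minimal resolution; here one uses that any crepant extraction over a Du Val singularity is again Du Val, and proceeds by induction on the height of the extraction above $x$ to reduce to the minimal-resolution case. For the klt case with $B \neq 0$, tracking how perturbations of $B$ select the mld-computing divisors from among the vertices of $\mathcal{G}$ and showing they must be extremal vertices will also require careful bookkeeping.
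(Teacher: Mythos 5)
Your ``only if'' construction targets the wrong divisor in the $D$ case, and this is fatal to that half of the argument. For a Du Val singularity of type $D_m$ the trivalent vertex (the fork) \emph{is} a potential lc place --- in fact a Koll\'ar component: extracting it leaves only $A$-type chains meeting the extracted curve in a tail, so the extracted pair is plt (this is Theorem \ref{thm: de type kc} in the paper), and concretely the coefficient vector $(1,\dots,1,\tfrac12,\tfrac12)$ (value $1$ along the long chain and the fork, $\tfrac12$ on the two short branches) solves exactly the linear system you set up, with $a_{i_0}=1$ at the fork. So your claimed non-solvability fails. The genuine counterexamples for $D_m$, $m\geq 5$, are the two curves forming the length-one branches adjacent to the fork (and for $E_6,E_7,E_8$ every non-fork vertex); for those your intersection-matrix method does produce a contradiction, which is essentially how the paper argues via \cite[Theorem 4.15]{KM98} in Theorem \ref{thm: de type potential lc place du val}. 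You also need to justify (as the paper does) that the only divisors computing the mld of a Du Val germ are the exceptional curves of the minimal resolution, but that part is routine.

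The ``if'' direction has two problems. First, ``$(X\ni x,B)$ not klt $\Rightarrow \mld(X\ni x,B)=0$'' is false: a plt pair with a reduced boundary component through $x$, e.g.\ $(\mathbb{A}^2\ni 0, L)$ with $L$ a line, is not klt but has mld $=1$. The correct splitting is: $X\ni x$ not klt (then $B=0$ and every mld-computing divisor is an lc place), $X$ klt but the pair not plt (then mld $=0$), and the dlt case, which reduces to plt pairs. Second, and more seriously, the core case ($X$ klt, $B\neq 0$ or $X$ not Du Val) is not actually proved. Your claim that a negativity argument forces mld-computing divisors to sit at ``extremal positions'' of the minimal-resolution graph is false for smooth and $A$-type germs with $B\neq 0$: there the divisors computing the mld typically live arbitrarily high above the minimal resolution (weighted blow-ups), and showing they are potential lc places of $X\ni x$ is precisely the content of Kawakita's theorem (Theorem \ref{thm: Kaw17 1.1}) together with the dual-graph/chain analysis of Sections 3--4 (Lemmas \ref{lem: blow-up not intersection is chain}, \ref{lem: km98 4.15 generalization}, \ref{lem: a type is plt}, and for $D/E$-type the fact that such divisors do lie on the minimal resolution, Lemma \ref{lem: de type mld reached at minimal resolution}, plus the branch-length constraints of Lemma \ref{lem: Discrepancy of the fork is minimal}). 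Invoking Theorem \ref{thm: dim 2 compute mld is kc} cannot help here, since it is only an existence statement about one mld-computing divisor, whereas the theorem to be proved quantifies over all of them; and ``a direct construction of $B'$ using general curves with appropriate tangent directions'' is exactly the step that needs proof, not a proof.
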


We say a few words about the intuition of Questions \ref{ques: div compute mld is kc} and \ref{ques: div compute mld are lcp}. Roughly speaking, a Koll\'ar component always admits a log Fano structure that is compatible with the local singularity, and a potential lc place always admits a log Calabi-Yau structure that is compatible with the local singularity. These structures allow us to use results in global birational geometry to study the behavior of the divisor and the local geometry of the singularity. On the other hand, we usually do not know whether a divisor calculating the mld is admits those good structures or not, and therefore, many powerful tools in global geometry are difficult to be applied to the study on the mlds of a singularity.

Therefore, getting a satisfactory answer for either Question \ref{ques: div compute mld is kc} or Question \ref{ques: div compute mld are lcp} could provide us possibilities to apply global geometry results to tackle the ACC conjecture or the LSC conjecture for mlds. In particular, since Koll\'ar components are well-studied in K-stability theory, with a satisfactory answer for Question \ref{ques: div compute mld is kc}, there are strong potentials for K-stability theory results to be applied to the study on mlds.

\medskip

Theorem \ref{thm: dim 2 compute mld is kc} and Theorem \ref{thm: when all divisors computing mld are potential lc places} follow from the following classification result on divisors computing mlds on lc surfaces:
\begin{thm}\label{thm: classification divisors computing mld on surfaces}
Let $(X\ni x,B)$ be an lc surface germ and $\mathcal{C}$ the set of all prime divisors over $X\ni x$ which compute $\mld(X\ni x,B)$. 
\begin{enumerate}
\item If $(X\ni x,B)$ is dlt, then:
\begin{enumerate}
    \item If $X\ni x$ is smooth or an $A$-type singularity, then any element of $\mathcal{C}$ is a Koll\'ar component of $X\ni x$.
    \item If $X\ni x$ is a $D_m$-type singularity for some integer $m\geq 4$ or an $E_m$-type singularity for some integer $m\in\{6,7,8\}$, let $f: Y\rightarrow X$ be the minimal resolution of $X\ni x$ and $\Dd(f)$ the dual graph of $f$. Then:
    \begin{enumerate}
        \item There exists a unique element $E\in\mathcal{C}$ that is  a Koll\'ar component of $X\ni x$, and $E$ is the unique fork of $\Dd(f)$.
        \item $\mathcal{C}\subset\Exc(f)$.
        \item If $B\not=0$ or $X\ni x$ is not Du Val, then:
        \begin{enumerate}
            \item Any element of $\mathcal{C}$ is a potential lc place of $X\ni x$.
            \item If $X\ni x$ is an $E_m$-type singularity, then $\mathcal{C}$ only contains the unique fork of $\Dd(f)$.
        \end{enumerate}
        \item If $B=0$ and $X\ni x$ is Du Val, then:
        \begin{enumerate}
            \item $\mathcal{C}=\Exc(f)$.
            \item If $X\ni x$ is a $D_m$-type singularity, then an element $F\in\mathcal{C}$ is a potential lc place of $X\ni x$ if and only if either $F$ is the fork of $\Dd(f)$, or the two branches which do contain $F$ both have length 1.
            \item If $X\ni x$ is an $E_m$-type singularity, then an element $F\in\mathcal{C}$ is a potential lc place of $X\ni x$ if and only if $F$ is the fork of $\Dd(f)$.
        \end{enumerate}
    \end{enumerate}
\end{enumerate}
\item If $(X\ni x,B)$ is not dlt but $X\ni x$ is klt, then:
\begin{enumerate}
    \item Any element of $\mathcal{C}$ is a potential lc place of $X\ni x$.
    \item There exists an element of $\mathcal{C}$ that is a Koll\'ar component of $X\ni x$.
    \item If $X\ni x$ is smooth, then any element of $\mathcal{C}$ is a Koll\'ar component of $\mld(X\ni x,B)$
\end{enumerate}
\item If $X\ni x$ is not klt, then any element of $\mathcal{C}$ is a potential lc place of $X\ni x$.
\end{enumerate}
\end{thm}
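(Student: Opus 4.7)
My plan is to analyze the minimal resolution $f \colon Y \to X$ of $X\ni x$ and leverage the classification of its dual graph $\Dd(f)$. Writing
\[
K_Y + f_*^{-1}B + \sum_{i=1}^n (1-a_i)E_i = f^*(K_X+B),
\]
where $E_i$ are the exceptional divisors of $f$ and $a_i = a(E_i,X,B)$ are their log discrepancies, I would use the fact that for a surface germ with $X\ni x$ klt, $\Dd(f)$ is a tree of smooth rational curves classified combinatorially: chains in the $A$-type case, and trees with a unique trivalent vertex --- the \emph{fork} --- in the $D$- and $E$-type cases.

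I would first dispose of parts (3) and (2). If $X\ni x$ is not klt, or if $(X,B)$ is klt but not dlt, then some divisor is an lc place of $(X,B)$, so $\mld(X\ni x,B) = 0$ and every $E\in\mathcal{C}$ is already an lc place of $(X,B)$, hence trivially a potential lc place. For (2)(b), I would produce a Koll\'ar component in $\mathcal{C}$ by extracting a single suitable exceptional divisor of the minimal resolution using the klt hypothesis on $X\ni x$; for (2)(c), I would invoke Kawakita's theorem \cite{Kaw17}, which identifies every divisor computing an mld on a smooth surface with a weighted blow-up, hence a Koll\'ar component.

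The bulk of the argument is part (1). For (1)(a), the smooth or $A$-type case, the dual graph is empty or a chain; the smooth subcase is Kawakita's theorem, and for $A$-type I would use the toric/chain structure plus an explicit discrepancy computation to exhibit each element of $\mathcal{C}$ as a Koll\'ar component. For the $D$/$E$-type case (1)(b), let $E_0$ denote the unique fork. I would proceed in three steps.
\textbf{Step A (proof of (ii))}: show $\mathcal{C}\subset\Exc(f)$ via the blow-up formula $a(E',Y,B_Y) = 2 - \mult_p(B_Y)$ applied to any point $p$ on $Y$, combined with the dlt bound on $\mult_p(B_Y)$, showing that the resulting log discrepancy strictly exceeds $\min_i a_i$.
\textbf{Step B (proof of (i))}: show $E_0$ is a Koll\'ar component by contracting all branches of $\Dd(f)$ away from $E_0$ and verifying that the resulting pair is plt with $-E_0$ relatively ample, while checking that extracting any non-fork $E_i$ alone would violate plt.
\textbf{Step C (proof of (iii))}: when $B\ne 0$ or $X\ni x$ is not Du Val, use the strict inequalities among the $a_i$ to construct explicit boundaries $D$ making each $E_i\in\mathcal{C}$ an lc place, and use the asymmetry of $E$-type branches to force $\mathcal{C}=\{E_0\}$.

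For (1)(b)(iv), the Du Val case with $B=0$, all $a_i = 1$, so $\mathcal{C}=\Exc(f)$ trivially. The characterization of potential lc places reduces to a linear feasibility problem on the dual graph: for each candidate $E_i$, find coefficients $b_j$ of a boundary $D$ such that $a(E_i,X,D)=0$ while $0\le a(E_j,X,D)\le 1$ for all other $j$. Solving this on each diagram yields the length conditions of the theorem: for $D_m$, feasibility holds exactly when $E_i$ is the fork or the endpoint of a length-$1$ branch; for $E_m$, only for the fork. The main obstacle is this last step --- constructing an explicit lc boundary $D$ in the feasible cases and proving infeasibility in the others; it is a finite but delicate verification on $D_m$, $E_6$, $E_7$, $E_8$, where the asymmetry of branch lengths in $E_m$ and the exceptional role of length-$1$ branches in $D_m$ drive the dichotomy.
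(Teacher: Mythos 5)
Your overall skeleton (reduce to the minimal resolution, split by dlt/plt versus not, treat $A$ versus $D/E$ via the dual graph) is the paper's skeleton, but at the two places where the real work happens your plan either omits the key idea or would fail. First, in (2)(b) you propose to ``extract a single suitable exceptional divisor of the minimal resolution'' of $X\ni x$. This cannot work as stated: when $(X\ni x,B)$ is not plt the elements of $\mathcal{C}$ are lc places of $(X,B)$, and these are in general not among the divisors of the minimal resolution of $X\ni x$ at all (e.g.\ $X$ smooth, $B$ two lines through $x$: the minimal resolution is the identity). The paper's proof (Theorem \ref{thm: lc not klt kc}) instead perturbs the boundary to $(1-\epsilon)B$ with $\epsilon$ small, applies the plt-case results (Theorems \ref{thm: a type kc} and \ref{thm: de type kc}) to get a Koll\'ar component $E$ of $X\ni x$ with $a(E,X,(1-\epsilon)B)$ below the smallest positive log discrepancy of $(X,B)$, and concludes $a(E,X,B)=0$; this perturbation trick is the missing idea. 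Second, your Step A (the proof of $\mathcal{C}\subset\Exc(f)$) relies on the first-blow-up formula $a=2-\mult_p$ at points $p\in Y$ plus a ``dlt bound'' on $\mult_p(B_Y)$. Controlling the first blow-up does not control the mld of the log pullback at $p$: divisors obtained by longer towers of blow-ups can have strictly smaller log discrepancy, and at a node $F_i\cap F_k$ the first blow-up only gives $a_i+a_k-\mult_p B_Y$, which need not exceed $\min_j a_j$. Moreover $\mathcal{C}\subset\Exc(f)$ is simply false for $A$-type germs, so any correct argument must use the fork structure; in the paper this enters through the key Lemma \ref{lem: km98 4.15 generalization} (at most one exceptional curve meets $B_Y$ with degree $\geq 1$, and only as a tail of an $A$-type chain), combined with the inductive Lemma \ref{lem: interseciton<1 imply mld does not need blow up} and the auxiliary extraction argument in Lemma \ref{lem: de type mld reached at minimal resolution}. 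None of this is visible in your sketch, and the same objection applies to your (1)(a) plan for $A$-type, where the hard point is precisely that the whole extraction graph of a divisor computing the mld is a chain in the right position (Lemmas \ref{lem: exc is a chain}--\ref{lem: blow-up not intersection is chain two tails}), not an ``explicit discrepancy computation.''

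There is also a concrete error in your announced outcome of the Du Val feasibility analysis in (1)(b)(iv): for $D_m$ with $m\geq 5$ you claim feasibility exactly for the fork and ``the endpoint of a length-$1$ branch,'' but the truth is the opposite: the potential lc places are the fork and the curves of the long branch (equivalently, those $F$ whose two complementary branches have length $1$), while the tips of the two length-$1$ branches are not potential lc places. A direct check on $D_5$ (all curves $(-2)$): extracting a short-branch tip forces the crepant coefficient of the fork in the pullback to be $6/5>1$, so the extracted pair is not lc, whereas extracting any long-branch curve gives coefficients $\leq 1$. The paper obtains this from the criterion that $F$ is a potential lc place iff the extraction $(W,F_W)$ is lc (Lemma \ref{lem: kc is plc place}) together with \cite[Theorem 4.15]{KM98}; if you carry out your linear feasibility computation you will recover that statement, not the one you assert. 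Finally, your Step B establishes that the fork is a Koll\'ar component and that non-fork extractions are not plt, but (1)(b)(i) also requires that the fork actually computes $\mld(X\ni x,B)$; this is the content of the negativity-lemma computation in Lemma \ref{lem: Discrepancy of the fork is minimal} and should not be passed over.
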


We hope that our results could inspire people to tackle Questions \ref{ques: div compute mld is kc} and \ref{ques: div compute mld are lcp}.

\begin{rem}
Some complementary examples of our main theorems are given in Section 6.
\end{rem}

\begin{rem}
Although the study on minimal log discrepancies was traditionally considered over $\mathbb C$, recently there has been some studies on the structure of minimal log discrepancies over fields of arbitrary characteristics (cf. \cite{Shi19,Che20,Ish20}). In this paper, the results hold over fields of arbitrary characteristics.
\end{rem}

\noindent\textbf{Acknowledgement}.  The authors would like to thank Christopher D. Hacon for useful discussions and encouragements. The first author would like to thank Harold Blum, Jingjun Han, Yuchen Liu, Chenyang Xu, and Ziquan Zhuang for inspiration to Questions \ref{ques: div compute mld is kc} and \ref{ques: div compute mld are lcp} and useful discussions. The authors would like to thank Jingjun Han for useful comments. Special thank to Ziquan Zhuang who kindly share Example \ref{ex: threefold mld not kc} after the first version of the paper was posted on arXiv. The authors were partially supported by NSF research grants no: DMS-1801851, DMS-1952522 and by a grant from the Simons Foundation; Award Number: 256202.

\section{Preliminaries}

We adopt the standard notation and definitions in \cite{KM98}.
\begin{defn}\label{defn: positivity}
	A \emph{pair} $(X,B)$ consists of a normal quasi-projective variety $X$ and an $\Rr$-divisor $B\ge0$ such that $K_X+B$ is $\Rr$-Cartier. If $B\in [0,1]$, then $B$ is called a \emph{boundary}.
	
	Let $E$ be a prime divisor on $X$ and $D$ an $\mathbb R$-divisor on $X$. We define $\mult_{E}D$ to be the \emph{multiplicity} of $E$ along $D$. Let $\phi:W\to X$
	be any log resolution of $(X,B)$ and let
	$$K_W+B_W:=\phi^{*}(K_X+B).$$
	The \emph{log discrepancy} of a prime divisor $D$ on $W$ with respect to $(X,B)$ is $1-\mult_{D}B_W$ and it is denoted by $a(D,X,B).$
	For any non-negative real number $\epsilon$, we say that $(X,B)$ is \emph{lc} (resp. \emph{klt}) if $a(D,X,B)\ge0$ (resp. $>0$) for every log resolution $\phi:W\to X$ as above and every prime divisor $D$ on $W$. We say that $(X,B)$ is \emph{dlt} if $a(D,X,B)>0$ for some log resolution $\phi:W\to X$ as above and every prime divisor $D$ on $W$. We say that $(X,B)$ is \emph{plt} if $a(D,X,B)>0$ for any exceptional prime divisor $D$ over $X$. 
	
	A \emph{germ} $(X\ni x,B)$ consists of a pair $(X,B)$ and a closed point $x\in X$. If $B=0$, the germ $(X\ni x,B)$ is usually represented by $X\ni x$. We say that $(X\ni x,B)$ is \emph{lc} (resp. \emph{klt}, \emph{dlt}, \emph{plt}) if $(X,B)$ is \emph{lc} (resp. \emph{klt}, \emph{dlt}, \emph{plt}) near $x$. We say that $(X\ni x,B)$ is \emph{smooth} if $X$ is smooth near $x$.  We say that $(X\ni x,B)$ is \emph{log smooth} if $X$ is log smooth near $x$.  
\end{defn}

\begin{defn}\label{defn: mld and alct} The \emph{minimal log discrepancy (mld)} of an lc germ $(X\ni x,B)$ is
	$$\mld(X\ni x,B):=\min\{a(E,X,B)\mid E \text{ is a prime divisor over } X\ni x\}.$$
\end{defn}

\begin{defn}[Plt blow-ups]\label{defn: reduced component}
	Let $(X\ni x,B)$ be a klt germ. A \emph{plt blow-up} of $(X\ni x,B)$ is a blow-up $f: Y\rightarrow X$ with the exceptional divisor $E$ over $X\ni x$, such that $(Y,f^{-1}_*B+E)$ is plt near $E$, and $-E$ is ample over $X$. The divisor $E$ is called a \emph{Koll\'ar component} (in some references, \emph{reduced component}) of $(X\ni x,B)$.
\end{defn}

\begin{defn}[Potential lc place]\label{defn: potential lc place}
Let $(X\ni x,B)$ be an lc germ. A \emph{potential lc place} of $(X\ni x,B)$ is a divisor $E$ over $X\ni x$, such that there exists $G\geq 0$ on $X$ such that $(X\ni x,B+G)$ is lc and $a(E,X,B+G)=0$.
\end{defn}

\begin{defn}
    A \emph{surface} is a normal quasi-projective variety of dimension $2$. A surface germ $X\ni x$ is called \emph{Du Val} if $\mld(X\ni x,0)=1$. 
    
Let $X\ni x$ be a klt surface germ of type $A$ (resp. $D,E$) and $m\geq 1$ (resp. $m\geq 4,m\in\{6,7,8\}$) an integer. Let $f$ be the minimal resolution of $X\ni x$. We say that $X\ni x$ is an $A_m$ (resp. $D_m,E_m$)-\emph{type singularity} if $\Exc(f)$ contains exactly $m$ prime divisors.
\end{defn}

For surfaces, to check that an extraction is a plt blow-up, we only need to control the singularity as the anti-ample requirement is automatically satisfied. The following lemma is well-known and we will use it many times:

\begin{lem}\label{lem: kc is plc place}
Let $(X\ni x,B)$ be a klt surface germ, $f: Y\rightarrow X$ an extraction of a prime divisor $E$, and $B_Y:=f^{-1}_*B$. Then:
\begin{enumerate}
    \item If $(Y,B_Y+E)$ is plt near $E$, then $E$ is a Koll\'ar component of $(X\ni x,B)$.
    \item If $(Y,B_Y+E)$ is lc near $E$, then $E$ is a potential lc place of $(X\ni x,B)$.
\end{enumerate} 
In particular, any Koll\'ar component of $(X\ni x,B)$ is a potential lc place of $(X\ni x,B)$.
\end{lem}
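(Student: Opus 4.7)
The plan is to address parts (1) and (2) separately; the final assertion will then be immediate.

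For (1), the plt hypothesis is already half the definition of a plt blow-up, so the only missing content is the $f$-ampleness of $-E$. Since $f$ is a proper birational morphism of normal surfaces whose entire exceptional locus equals the single prime divisor $E$, the standard negative-definiteness of the intersection form on the exceptional locus gives $E^{2}<0$, hence $(-E)\cdot E>0$. As every irreducible curve contracted by $f$ is $E$ itself, relative Nakai--Moishezon on the one-dimensional fiber $E$ yields that $-E$ is $f$-ample. Combined with the plt hypothesis, this is exactly the definition of a plt blow-up, so $E$ is a Koll\'ar component of $(X\ni x,B)$.

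For (2), write $K_{Y}+B_{Y}+aE=f^{*}(K_{X}+B)$, where $a:=\mult_{E}f^{*}(K_{X}+B)$; the klt hypothesis on $(X\ni x,B)$ gives $a(E,X,B)=1-a>0$, so $a<1$. The plan is to construct an effective $\mathbb{R}$-Cartier divisor $G$ on $X$ with $\mult_{E}f^{*}G=1-a$ (forcing $a(E,X,B+G)=0$) and with $(X\ni x,B+G)$ lc. Fix an effective Cartier divisor $H$ on a neighborhood of $x$ passing through $x$, and set $c:=\mult_{E}f^{*}H>0$. For a large integer $m$ choose sufficiently general $H_{1},\dots,H_{m}$ linearly equivalent to $H$ through $x$, and set $t:=(1-a)/(mc)$ and $G:=t(H_{1}+\cdots+H_{m})$. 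Then $\mult_{E}f^{*}G=tmc=1-a$ by construction, so the pullback of $K_{X}+B+G$ has coefficient $1$ on $E$, giving $a(E,X,B+G)=0$.

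It remains to verify that $(X\ni x,B+G)$ is lc, equivalently that $(Y,B_{Y}+E+t\sum_{i}f^{-1}_{*}H_{i})$ is lc near $E$. Fix a log resolution $g:W\to Y$ of $(Y,B_{Y}+E)$; only finitely many prime divisors $F\subset W$ lie over a neighborhood of $E$, and each satisfies $a(F,Y,B_{Y}+E)\geq 0$ by hypothesis. By Bertini, for a sufficiently general choice of the $H_{i}$, their strict transforms on $W$ meet $\Exc(g)$ and each other transversally, so the log discrepancies of the perturbed pair differ from those of $(Y,B_{Y}+E)$ by a quantity proportional to $t$. Taking $m$ large makes $t$ small enough to preserve all finitely many inequalities, so lc-ness holds near $E$, and $E$ is a potential lc place of $(X\ni x,B)$. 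The ``in particular'' assertion is then immediate: a Koll\'ar component $E$ has $(Y,B_{Y}+E)$ plt (hence lc) near $E$, so part (2) applies.

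The main technical step is the Bertini-plus-continuity argument in (2); this is routine because lc-ness can be tested on a single log resolution involving only finitely many discrepancies, but it requires a careful choice of the $H_{i}$ and of the parameter $m$. The discrepancy arithmetic in (2) and the negativity computation in (1) are otherwise bookkeeping.
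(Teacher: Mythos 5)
Your part (1) is fine and is a legitimate alternative to the paper's argument (the paper uses $\Qq$-factoriality of the klt surface $X$ and \cite[Lemma 3.6.2(3)]{BCHM10} to get an anti-ample exceptional divisor, which must be a multiple of $E$); your route via negative definiteness and relative Nakai--Moishezon works, though you should note that $E$ is $\Rr$-Cartier near $E$ because the plt hypothesis forces $Y$ to be $\Qq$-factorial there.

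Part (2), however, has a genuine gap. Your divisor $G$ is built from \emph{general} members $H_i\sim H$ through $x$, and the Bertini step ("their strict transforms meet $\Exc(g)$ and each other transversally") fails: every $H_i$ passes through $x$, so every strict transform $f^{-1}_*H_i$ must meet $E$, and the linear system of such strict transforms typically has a nonempty base locus on $E$ that no choice of general member avoids. Moreover, taking $m$ large does not make the perturbation small, since the total coefficient $mt=(1-a)/c$ is fixed. A concrete failure: let $X=\mathbb A^2\ni 0$, $B=0$, and let $E$ be the exceptional divisor of the $(1,k)$-weighted blow-up, $k\geq 2$, so $a(E,X,0)=1+k$ and $(Y,E)$ is plt near $E$. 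A general curve $H_i$ through $0$ has $\mult_E f^*H_i=1$ and its strict transform passes through the $A_{k-1}$-point of $Y$ on $E$ (the common base point), so your $G=t\sum H_i$ with $\mult_E f^*G=1+k$ satisfies $\mult_0 G=1+k>2$; hence $(X,G)$ is not lc at $0$ and the construction breaks, while $E$ is of course still a potential lc place (take $G$ to be the two coordinate lines). The point is that a valid $G$ must be \emph{special} (adapted to $E$, e.g.\ with the right weighted tangency), and the natural way to produce it is upstairs: the paper uses that $-E$, hence $-(K_Y+B_Y+E)$, is ample over $X$, chooses a \emph{general} $G_Y\sim_{\Rr,X}-(K_Y+B_Y+E)$ so that $(Y,B_Y+E+G_Y)$ is lc near $E$ and $K_Y+B_Y+E+G_Y\sim_{\Rr,X}0$, and sets $G:=f_*G_Y$; genuine generality near $E$ is available on $Y$ but not for divisors on $X$ constrained to pass through $x$. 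You should redo (2) along these lines (or otherwise construct $G$ with the correct multiplicity $\mult_E f^*G=a(E,X,B)$ without forcing $\mult_x G$ too large).
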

\begin{proof}
Since $(X\ni x,B)$ is a klt surface germ, $X$ is $\Qq$-factorial, so there exists an $f$-exceptional divisor $F\geq 0$ such that $-F$ is ample over $X$ \cite[Lemma 3.6.2(3)]{BCHM10}. Since $f$ only extracts $E$, $-E$ is ample over $X$. This implies (1).

Since $-E$ is ample over $X$, $-(K_Y+B_Y+E)$ is ample over $X$. We may pick a general $G_Y\sim_{\mathbb R,X}-(K_Y+B_Y+E)$ such that $(Y,B_Y+E+G_Y)$ is lc near $E$ and $K_Y+B_Y+E+G_Y\sim_{\mathbb R,X}0$. Let $G:=f_*G_Y$, then $(X\ni x,B+G)$ is lc and $E$ is a potential lc place of $(X\ni x,B)$. 
\end{proof}

\begin{defn}[Dual graph]\label{defn: crt}
Let $n$ be a non-negative integer, and $C=\cup_{i=1}^nC_i$ a collection of irreducible curves on a smooth surface $U$. We define the \emph{dual graph} $\Dd(C)$ of $C$ as follows.
\begin{enumerate}
    \item The vertices $v_i=v_i(C_i)$ of $\Dd(C)$ correspond to the curves $C_i$.
    \item For $i\neq j$,the vertices $v_i$ and $v_j$ are connected by $C_i\cdot C_j$ edges.
\end{enumerate}
In addition,
\begin{enumerate}
 \setcounter{enumi}{2}
    \item if we label each $v_i$ by the integer $e_i:=-C_i^2$, then $\Dd(C)$ is called the \emph{weighted dual graph} of $C$.
\end{enumerate}

A \emph{fork} of a dual graph is a curve $C_i$ such that $C_i\cdot C_j\geq 1$ for exactly three different $j\not=i$. A \emph{tail} of a dual graph is a curve $C_i$ such that $C_i\cdot C_j\geq 1$ for at most one $j\not=i$.

For any birational morphism $f: Y\rightarrow X$ between surfaces, let $E=\cup_{i=1}^nE_i$ be the reduced exceptional divisor for some non-negative integer $n$. We define $\Dd(f):=\Dd(E)$.

When we have a dual graph, we sometimes label $C_i$ near the vertex $v_i$. We sometimes use black dots in the dual graph to emphasize the corresponding curves that are not exceptional.
\end{defn}

\begin{defn}
Let $\Dd$ be a dual graph. If $\Dd$ looks like the following
\begin{center}
    \begin{tikzpicture}
                                                    
         \draw (2.1,0) circle (0.1);
         \node [below] at (2.1,-0.1) {\footnotesize$C_{m_1}$};
         \draw[dashed] (2.2,0)--(3.2,0);
         \draw (3.3,0) circle (0.1);
         \node [below] at (3.3,-0.1) {\footnotesize$C_1$};
         \draw (3.4,0)--(3.8,0);

         \draw (3.9,+1.5) circle (0.1);
         \draw (3.9,0) circle (0.1);
         \draw (3.9,0.6) circle (0.1);
         \draw (4.5,0) circle (0.1);
         \node [below] at (4.7,-0.1) {\footnotesize$C_{m_2+1}$};
         \draw (5.7,0) circle (0.1);
         \node [below] at (5.7,-0.1) {\footnotesize$C_m$};

         \draw [dashed](3.9,+0.7)--(3.9,+1.4);
         \draw (3.9,0.1)--(3.9,0.5);
         \draw (4.0,0)--(4.4,0);
         \draw [dashed](4.6,0)--(5.6,0);

         \node [right] at (4,1.5) {\footnotesize$C_{m_2}$};
         \node [below] at (3.9,-0.1) {\footnotesize$C_0$};
         \node [right] at (4,0.6) {\footnotesize$C_{m_1+1}$};
    \end{tikzpicture}
\end{center}
for some integers $m>m_2>m_1$, then $\cup_{i=1}^{m_1}C_i$, $\cup_{i=m_1+1}^{m_2}C_i$, and $\cup_{i=m_2+1}^{m}C_i$ will be called the \emph{branches} of $\Dd$. The \emph{length} of a branch is the number of irreducible curves in this branch.
\end{defn}

We will use the following lemmas many times in this paper:
 
\begin{lem}\label{lem: intersection number surface}
Let $X\ni x$ be a smooth germ, $f: Y\rightarrow X$ the smooth blow-up of $X\ni x$ with exceptional divisor $E$, and $C$ and $D$ two $\Rr$-divisors on $X$ without common irreducible components. Then
\begin{align*}
    (C\cdot D)_x&=\sum_{y\in f^{-1}(x)}(f^{-1}_*C\cdot f^{-1}_*D)_y+(f^{-1}_*C\cdot E)(f^{-1}_*D\cdot E)\\
    &=\sum_{y\in f^{-1}(x)}(f^{-1}_*C\cdot f^{-1}_*D)_y+\mult_xC\cdot\mult_xD.
\end{align*}
\end{lem}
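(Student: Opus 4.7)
The plan is to compute everything from the projection formula and the standard pullback formula for a point blow-up on a smooth surface. Let $m := \mult_x C$ and $n := \mult_x D$. Since $X\ni x$ is smooth and $f$ is the blow-up at $x$, the exceptional divisor $E \cong \mathbb{P}^1$ satisfies $E^2 = -1$, and for any Weil divisor $G$ on $X$ we have the classical decomposition
\begin{equation*}
f^{*}C = f^{-1}_{*}C + mE, \qquad f^{*}D = f^{-1}_{*}D + nE.
\end{equation*}
Moreover, since the strict transform of a curve of multiplicity $m$ at $x$ meets $E$ transversely (counted with multiplicities) in a length-$m$ scheme, one has $f^{-1}_{*}C \cdot E = m$ and $f^{-1}_{*}D \cdot E = n$; equivalently, this follows from $f^{*}C\cdot E = 0$ (projection formula, since $f_{*}E = 0$) together with $E^{2}=-1$. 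Extending by $\mathbb{R}$-linearity, the same identities hold for $\mathbb{R}$-divisors.

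Next I would apply the projection formula to localize the computation. Because $C$ and $D$ share no components, the cycle-theoretic intersection $f^{*}C\cdot f^{*}D$ is supported on the fiber over $x$ together with the strict transforms, and summing its contributions at the points of $f^{-1}(x)$ recovers $(C\cdot D)_{x}$. Expanding,
\begin{align*}
(C\cdot D)_{x} &= (f^{*}C)\cdot(f^{*}D)\big|_{f^{-1}(x)}\\
 &= \bigl(f^{-1}_{*}C + mE\bigr)\cdot\bigl(f^{-1}_{*}D + nE\bigr)\big|_{f^{-1}(x)}\\
 &= \sum_{y\in f^{-1}(x)}\bigl(f^{-1}_{*}C\cdot f^{-1}_{*}D\bigr)_{y} + n\,(f^{-1}_{*}C\cdot E) + m\,(f^{-1}_{*}D\cdot E) + mn\,E^{2}.
\end{align*}
Substituting $f^{-1}_{*}C\cdot E = m$, $f^{-1}_{*}D\cdot E = n$, and $E^{2}=-1$, the last three terms collapse to $mn + mn - mn = mn$, yielding the second displayed equality of the lemma. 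The first equality is then just the identity $(f^{-1}_{*}C\cdot E)(f^{-1}_{*}D\cdot E) = mn$ read backwards.

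There is essentially no obstacle beyond bookkeeping. The only point that deserves care is the claim $f^{-1}_{*}C\cdot E = \mult_{x}C$: one justifies it for a reduced irreducible curve by a local equation computation in the blow-up chart (if $C = (g = 0)$ with $g$ of order $m$ at $x$, then in the chart where $E = (u=0)$ one has $f^{*}g = u^{m}\tilde g$, so the strict transform meets $E$ in a length-$m$ subscheme), and extends by $\mathbb{R}$-linearity to arbitrary $\mathbb{R}$-divisors, which also gives the parallel identity for $D$.
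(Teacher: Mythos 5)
Your argument is correct and essentially the same as the paper's: both rest on the decomposition $f^{*}C=f^{-1}_{*}C+(\mult_{x}C)E$, the identity $f^{-1}_{*}C\cdot E=\mult_{x}C$ obtained from $f^{*}C\cdot E=0$, and the projection formula after shrinking to a neighborhood of $x$. The only cosmetic difference is that the paper applies the projection formula to $f^{*}C\cdot f^{-1}_{*}D$ (pulling back just one divisor), so the $E^{2}=-1$ cancellation you perform when expanding both pullbacks never appears explicitly.
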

\begin{proof}
Possibly shrinking $X$ to a neighborhood of $x$ and shrinking $Y$ to a neighborhood over $x$, we may assume that $(C\cdot D)_x=C\cdot D$ and $\sum_{y\in f^{-1}(x)}(f^{-1}_*C\cdot f^{-1}_*D)_y=f^{-1}_*C\cdot f^{-1}_*D$. Thus
$$0=f^*C\cdot E=(f^{-1}_*C+(\mult_xC)E)\cdot E=f^{-1}_*C\cdot E-\mult_xC$$
and
$$0=f^*D\cdot E=(f^{-1}_*D+(\mult_xC)E)\cdot E=f^{-1}_*D\cdot E-\mult_xD.$$
By the projection formula, 
$$C\cdot D=f^*C\cdot f^{-1}_*D=f^{-1}_*C\cdot f^{-1}_*D+(\mult_xC)E\cdot f^{-1}_*D=f^{-1}_*C\cdot f^{-1}_*D+\mult_xC\cdot \mult_xD.$$
\end{proof}

\begin{lem}[{cf. \cite[Lemma 3.41, Corollary 4.2]{KM98}}]\label{lem: intersection number compare coefficients}
Let $U$ be a smooth surface and $C=\cup_{i=1}^mC_i$ a connected proper curve on $U$. Assume that the intersection matrix $\{(C_i\cdot C_j)\}_{1\leq i,j\leq m}$ is negative definite. Let $A=\sum_{i=1}^m a_iC_i$ and $H=\sum_{i=1}^m b_iC_i$ be $\Rr$-linear combinations of the curves $C_i$. Assume that $H\cdot C_i\leq A\cdot C_i$ for every $i$, then either $a_i=b_i$ for each $i$ or $a_i<b_i$ for each $i$.
\end{lem}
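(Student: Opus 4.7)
The setup is linear algebra on the exceptional curves, so the plan is to translate everything into a single divisor $D:=A-H=\sum d_iC_i$ (with $d_i=a_i-b_i$) and show: if $D\cdot C_i\ge 0$ for every $i$, then either $D=0$ or every $d_i<0$. Note that under the convention $d_i=a_i-b_i$ this is exactly the conclusion that either $a_i=b_i$ for all $i$ or $a_i<b_i$ for all $i$.

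First I would split $D$ according to the sign of its coefficients: write $D=D_+-D_-$, where $D_+:=\sum_{d_i>0}d_iC_i$ and $D_-:=\sum_{d_i<0}(-d_i)C_i$, so that $D_+,D_-\ge 0$ have disjoint supports. The point is to compute
$$D_+\cdot D_+ \;=\; D_+\cdot D \;+\; D_+\cdot D_-.$$
The first term equals $\sum_{d_i>0}d_i(D\cdot C_i)\ge 0$ by hypothesis, and the second is $\ge 0$ because $D_+$ and $D_-$ have no common components, so $D_+\cdot D_-$ is a sum of intersection numbers between distinct curves in the connected configuration $C$, all of which are $\ge 0$. Hence $D_+\cdot D_+\ge 0$. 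On the other hand the intersection matrix $\{C_i\cdot C_j\}$ is negative definite, so $D_+\cdot D_+\le 0$ with equality iff $D_+=0$. Combining, $D_+=0$, i.e.\ $d_i\le 0$ for every $i$.

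It remains to exclude the mixed case where some $d_i=0$ while others are $<0$; this is where connectedness of $C=\cup C_i$ enters, and is the only delicate step. Suppose for contradiction that $D\ne 0$ but some $d_{i_0}=0$. Set $I:=\{i\mid d_i=0\}$ and $J:=\{i\mid d_i<0\}$; both are nonempty. Since $C$ is connected, I can choose $i_0\in I$ and $j_0\in J$ with $C_{i_0}\cdot C_{j_0}>0$. Then, using $d_{i_0}=0$,
$$D\cdot C_{i_0}\;=\;\sum_{j\ne i_0}d_j(C_j\cdot C_{i_0})\;\le\; d_{j_0}(C_{j_0}\cdot C_{i_0})\;<\;0,$$
since every summand is $\le 0$ (as $d_j\le 0$ and $C_j\cdot C_{i_0}\ge 0$ for $j\ne i_0$) and the $j_0$-term is strictly negative. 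This contradicts $D\cdot C_{i_0}\ge 0$, completing the proof.

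The main obstacle is really just this last connectedness argument; the first half (showing $d_i\le 0$ for all $i$) is a clean consequence of negative definiteness together with the disjoint-support decomposition, essentially an application of the Hodge index/Zariski decomposition idea in the negative-definite setting. I expect no serious computational difficulty, and the statement follows without needing anything beyond the negative-definiteness and connectedness hypotheses already given.
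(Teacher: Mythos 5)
Your proof is correct and is essentially the standard negativity-lemma argument: decompose $D=A-H$ into parts of positive and negative coefficients, use negative definiteness to kill the positive part, then use connectedness to rule out the mixed case. The paper gives no proof of its own (it cites \cite[Lemma 3.41, Corollary 4.2]{KM98}), and your argument is the same one found there, so there is nothing to correct.
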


\section{Divisors computing mlds over smooth surface germs}

In this section, we study the behavior of divisors computing mlds over a smooth surface germ. The following Definition-Lemma greatly simplify the notation in the rest of the paper and we will use it many times.

\subsection{Definitions and lemmas}

\begin{deflem}\label{deflem: fE}
Let $(X\ni x,B)$ be a smooth lc surface germ and $E$ a divisor over $X\ni x$. By \cite[Lemma 2.45]{KM98}, there exists a unique positive integer $n=n(E)$ and a unique sequence of smooth blow-ups
$$X_E:=X_{n,E}\xrightarrow{f_{n,E}}X_{n-1,E}\xrightarrow{f_{n-1,E}}\dots\xrightarrow{f_{1,E}}X_{0,E}:=X$$
such that $E$ is on $X_E$ and each $f_{i,E}$ is the smooth blow-up at $\Center_{X_{i-1,E}}E$. We define $f_E:=f_{1,E}\circ f_{2,E}\dots\circ f_{n,E}$, $E^i$ the exceptional divisor of $f_{i,E}$ for each $i$, and $E^i_j$ the strict transform of $E^i$ on $X_{j,E}$ for any $i\leq j$. In particular, $E^i_i=E^i$ for each $i$ and $E=E^n=E^n_n$.
\end{deflem}

\begin{rem}\label{rem: elementary lemma dual graph smooth blow up}
We need the following facts many times, which are elementary and we omit the proof. Let $(X\ni x,B)$ be a smooth lc surface germ, $E$ a divisor over $X\ni x$, and $n:=n(E)$. Then for any $i\leq j$ such that $i,j\in\{1,2,\dots,n\}$,
\begin{enumerate}
\item $E^i_j$ is a smooth rational curve,
    \item $\cup_{k=1}^iE^k_i$ is simple normal crossing,
    \item $(E^i)^2=-1$, and
    \item $(E^i_j)^2\leq -2$ when $i<j$.
\end{enumerate}
\end{rem}

\begin{lem}\label{lem: surface blow-up a divisor is 2-multiplicity}
Let $(X\ni x,B)$ be a smooth surface germ and $f: Y\rightarrow X$ the smooth blow-up at $x$ with exceptional divisor $E$. Then $a(E,X,B)=2-\mult_xB$. In particular, $\mld(X\ni x,B)\leq 2-\mult_xB$.
\end{lem}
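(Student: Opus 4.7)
The plan is to compute the log discrepancy $a(E,X,B)$ directly from the definition by writing out the pullback formula $K_Y + B_Y = f^*(K_X+B)$ and extracting the coefficient of $E$ in $B_Y$.

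First I would record the two standard pullback identities for a smooth point blow-up on a smooth surface: the canonical class formula $K_Y = f^*K_X + E$, which holds because the blow-up of a smooth surface at a point has a unique exceptional $(-1)$-curve with discrepancy $1$; and the formula $f^*B = f^{-1}_*B + (\mult_x B)\,E$, which follows from the fact that if $B$ is locally cut out by an equation of multiplicity $\mult_x B$ at $x$, then pulling it back picks up $E$ with exactly that multiplicity (this is essentially the same computation used in Lemma \ref{lem: intersection number surface}).

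Combining the two gives
\[
f^*(K_X+B) = K_Y + f^{-1}_*B + (\mult_x B - 1)\,E,
\]
so by Definition \ref{defn: positivity} the log discrepancy of $E$ with respect to $(X,B)$ is
\[
a(E,X,B) = 1 - (\mult_x B - 1) = 2 - \mult_x B,
\]
as claimed. For the ``in particular'' statement, $E$ is a prime divisor over $X\ni x$, so by Definition \ref{defn: mld and alct} one has $\mld(X\ni x,B) \leq a(E,X,B) = 2 - \mult_x B$.

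There is no real obstacle here; the only point worth a moment's care is to check that $(X\ni x,B)$ being lc makes the formula meaningful (so that $2-\mult_x B \geq 0$, which is automatic from lc-ness and the displayed pullback). The entire argument is a one-line discrepancy computation plus invoking the definition of mld.
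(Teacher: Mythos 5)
Your proposal is correct and is essentially the paper's proof: the paper simply cites \cite[Lemma 2.29]{KM98}, which is exactly the standard computation you wrote out, namely $K_Y=f^*K_X+E$ together with $f^*B=f^{-1}_*B+(\mult_xB)E$, giving $a(E,X,B)=2-\mult_xB$, and the mld bound then follows directly from Definition \ref{defn: mld and alct}. (Your closing remark about lc-ness is unnecessary, since the formula holds without that hypothesis, but it does no harm.)
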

\begin{proof}
This immediately follows from \cite[Lemma 2.29]{KM98}.
\end{proof}

\begin{lem}\label{lem: mult is leq intersection number}
Let $X\ni x$ be a smooth surface germ, $B\geq 0$ an $\Rr$-divisor on $X$ and $C$ a prime divisor on $X$. Then $(B\cdot C)_x\geq\mult_xB\cdot\mult_xC$.
\end{lem}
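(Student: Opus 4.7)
The plan is to reduce the inequality directly to the exact intersection formula supplied by Lemma~\ref{lem: intersection number surface}, applied to a single smooth blow-up at $x$. Since $(B\cdot C)_x$ is well-defined, we may first assume that $B$ and $C$ share no common component. Let $f\colon Y\to X$ be the smooth blow-up at $x$ with exceptional divisor $E$. Lemma~\ref{lem: intersection number surface} then gives
$$(B\cdot C)_x \;=\; \sum_{y\in f^{-1}(x)} (f^{-1}_*B \cdot f^{-1}_*C)_y \;+\; \mult_xB\cdot\mult_xC.$$

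The key step is to observe that every term of the sum over $y$ is non-negative. Since $B\geq 0$ and $C\geq 0$ are effective with no common component, the same holds for their strict transforms $f^{-1}_*B$ and $f^{-1}_*C$ on $Y$; at a smooth point of $Y$, the local intersection number of two effective $\mathbb{R}$-divisors with no common component is non-negative. Dropping this non-negative sum yields the desired inequality. There is no substantial obstacle here: Lemma~\ref{lem: mult is leq intersection number} is essentially a one-line corollary of Lemma~\ref{lem: intersection number surface}, the content being the identification of $\mult_xB\cdot\mult_xC$ as the exceptional contribution arising from a single blow-up at $x$.
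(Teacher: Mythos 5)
Your proof is correct, but it takes a different route from the paper: the paper disposes of this lemma by citing \cite[Exercise 5.4(a)]{Har77}, the classical statement that the local intersection multiplicity of two plane curves at a point is at least the product of their multiplicities, whereas you derive the inequality internally from Lemma \ref{lem: intersection number surface}, blowing up once at $x$ and discarding the non-negative sum $\sum_{y\in f^{-1}(x)}(f^{-1}_*B\cdot f^{-1}_*C)_y$. Your argument is sound: Lemma \ref{lem: intersection number surface} is proved by the projection formula alone, so there is no circularity; the implicit hypothesis $C\not\subset\Supp B$ (needed anyway for $(B\cdot C)_x$ to be defined) ensures the strict transforms are effective with no common component, so each local term at a smooth point of $Y$ is indeed non-negative. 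What your approach buys is a self-contained, one-blow-up proof that handles $\mathbb{R}$-divisors directly (no need to extend Hartshorne's integral statement by linearity) and makes transparent that $\mult_xB\cdot\mult_xC$ is exactly the exceptional contribution; what the paper's citation buys is brevity. Both are essentially the same underlying computation, since the blow-up formula is one standard way to prove Hartshorne's exercise.
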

\begin{proof}
It immediately follows from \cite[Excercise 5.4(a)]{Har77}
\end{proof}

\begin{lem}\label{lem: interseciton<1 imply mld does not need blow up}
Let $a\in [0,1]$ be a real number and $(X\ni x,\Delta:=B+aC)$ a smooth surface germ, where $B\geq 0$ is an $\Rr$-divisor and $C$ is a prime divisor such that $C\not\subset\Supp B$ and $C$ is smooth at $x$. Assume that $(B\cdot C)_{x}<1$, then $\mld(X\ni x,\Delta)>1-a$.
\end{lem}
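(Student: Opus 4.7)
The plan is to prove by induction on $n(E)$ (Definition-Lemma~\ref{deflem: fE}) the stronger statement that $a(E,X,\Delta)>1-a$ for every prime divisor $E$ over $X\ni x$; taking the infimum then yields the lemma. As a preliminary observation, Lemma~\ref{lem: mult is leq intersection number} together with the hypothesis gives $\mult_x B\leq (B\cdot C)_x<1$, and $\mult_x C=1$ since $C$ is smooth at $x$.

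The base case $n(E)=1$ is immediate: $E$ is then the exceptional divisor of the smooth blow-up of $X$ at $x$, and Lemma~\ref{lem: surface blow-up a divisor is 2-multiplicity} gives $a(E,X,\Delta)=2-\mult_x B-a>1-a$.

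For the inductive step, let $f_1\colon X_1\to X$ be the smooth blow-up at $x$ with exceptional divisor $E^1$, let $x_1\in E^1$ be the center of $E$ on $X_1$, and write
\[
\Delta_1=B_1+aC_1+c_1E^1,\qquad c_1:=\mult_x B+a-1,
\]
so that $a(E,X,\Delta)=a(E,X_1,\Delta_1)$ and $n_{X_1}(E)=n(E)-1$. Using Lemma~\ref{lem: intersection number surface}, I obtain $(B_1\cdot C_1)_{x_1}=(B\cdot C)_x-\mult_x B<1-\mult_x B$ when $x_1$ is the (unique, transversal) point $E^1\cap C_1$, and $(B_1\cdot E^1)_{x_1}\leq B_1\cdot E^1=\mult_x B<1$ when $x_1\in E^1\setminus C_1$. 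In each configuration I construct an auxiliary pair on $(X_1\ni x_1)$ of exactly the form of the lemma---a smooth germ plus an effective boundary plus some coefficient $a'\in[0,1]$ times a smooth curve through $x_1$ (namely $C_1$ or $E^1$), with intersection product strictly less than $1$---and apply the inductive hypothesis to conclude $a(E,X_1,\Delta_1)>1-a'$; the bound $1-a'\geq 1-a$ holds either because $a'=a$ (in the $x_1\in C_1$ case) or because $a'=c_1$ and $1-c_1=2-\mult_x B-a>1-a$ (using $\mult_x B<1$). When $c_1\geq 0$, the term $c_1E^1$ is absorbed into the new effective boundary; when $c_1<0$, one uses $-c_1\cdot\mult_E E^1\geq 0$ to bound $a(E,X_1,\Delta_1)$ below by the log discrepancy of the pair with the $c_1E^1$ term dropped, to which IH applies directly.

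The main obstacle is the sign case analysis on $c_1=\mult_x B+a-1$: although $c_1<a\leq 1$ always, $c_1$ can be negative (precisely when $\mult_x B<1-a$), so the naive ``new boundary'' is not effective and one must argue via the domination above rather than by direct absorption. A more conceptual alternative bypassing the induction is inversion of adjunction for the smooth Cartier divisor $C$ on the smooth surface $X$: $(B\cdot C)_x<1$ makes $(C\ni x,B|_C)$ klt, hence $(X\ni x,B+C)$ is plt and $a(E,X,B+C)>0$ for every prime divisor $E$ over $X\ni x$; then $a(E,X,B+aC)=a(E,X,B+C)+(1-a)\cdot\mult_E f_E^*C>(1-a)\cdot 1=1-a$, since $x\in C$ forces the local equation of $C$ to lie in the maximal ideal at $x$ and hence $\mult_E f_E^*C\geq 1$.
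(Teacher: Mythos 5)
Your inductive argument is essentially the paper's own proof: the same induction on the number of smooth blow-ups, the same crepant coefficient $c_1=\mult_xB+a-1$ after one blow-up, and the same case division (sign of $c_1$, and whether $x_1$ lies on $C_1$ or only on $E^1$), with all estimates coming from Lemmas \ref{lem: surface blow-up a divisor is 2-multiplicity}, \ref{lem: mult is leq intersection number} and \ref{lem: intersection number surface}. The only step you leave compressed is the one the paper isolates as its Case 2: when $c_1\geq 0$ and $x_1\in E^1\cap C_1$, one must check $\bigl((B_1+c_1E^1)\cdot C_1\bigr)_{x_1}<(1-\mult_xB)+c_1=a\leq 1$ using transversality of $E^1$ and $C_1$; your ingredients give exactly this, so the argument is correct.

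Your second, non-inductive sketch is a genuinely different route that the paper does not take. Granting inversion of adjunction, it is correct and shorter: $(B\cdot C)_x<1$ gives $(C\ni x,B|_C)$ klt, hence $(X\ni x,B+C)$ plt, and then the affine dependence $a(E,X,B+aC)=a(E,X,B+C)+(1-a)\mult_E f_E^*C$ together with $\mult_E f_E^*C\geq 1$ (valid because the center of $E$ is $x\in C$) yields the strict bound $>1-a$ for every $E$ over $X\ni x$. What this buys is conceptual clarity and brevity; what it costs is the appeal to adjunction/connectedness-type machinery, which in positive characteristic (the paper explicitly claims its results over fields of arbitrary characteristic) requires a separate, surface-specific justification, whereas the paper's induction is purely elementary intersection theory on smooth blow-ups and is characteristic-free by inspection. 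Either way, both your main proof and the paper's implicitly use that the minimal log discrepancy of the (lc) germ is attained, so that the divisorwise bound $a(E,X,\Delta)>1-a$ passes to $\mld(X\ni x,\Delta)>1-a$.
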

\begin{proof}
We only need to show that for any positive integer $n$ and any sequence of smooth blow-ups
$$X_n\xrightarrow{f_n}X_{n-1}\xrightarrow{f_{n-1}}\dots\xrightarrow{f_1}X_0:=X$$
over $X\ni x$ with exceptional divisors $E_k:=\Exc(f_k)$ for each $k$, we have $a(E_k,X,\Delta)>1-a$.

In the following, we show that $a(E_k,X,\Delta)>1-a$ for each $k$ by applying induction on $n$. When $n=1$, by Lemmas \ref{lem: surface blow-up a divisor is 2-multiplicity} and \ref{lem: mult is leq intersection number}, 
$$a(E_1,X,\Delta)=2-\mult_x(B+aC)=2-a-\mult_xB\geq 2-a-(B\cdot C)_x>1-a.$$
Therefore, we may assume that $n\geq 2$, and when we blow-up at most $n-1$ times, each divisor we have extracted has log discrepancy $>1-a$. In particular, $a(E_k,X,\Delta)>1-a$ for any $k\in\{1,2,\dots,n-1\}$.

Let $x_1:=\Center_XE_n$, and $B_1,C_1,\Delta_1$ the strict transforms of $B,C$ and $\Delta$ on $X_1$ respectively. Then $x_1\in E_1$. There are three cases:

\medskip

\noindent\textbf{Case 1}. $a+\mult_xB-1<0$. By Lemma \ref{lem: intersection number surface},
$$(B_1\cdot C_1)_{x_1}\leq (B\cdot C)_x-B_1\cdot E_1=(B\cdot C)_x-\mult_xB<1-\mult_xB\leq 1.$$
By induction hypothesis for the germ $(X_1\ni x_1,\Delta_1)$ and blowing-up at most $n-1$ times, we have
$$a(E_n,X,\Delta)=a(E_n,X_1,\Delta_1+(a+\mult_xB-1)E_1)\geq a(E_n,X_1,\Delta_1)>1-a$$
and finish the proof for \textbf{Case 1}.

\medskip

\noindent\textbf{Case 2}. $a+\mult_xB-1\geq 0$ and $x_1\in E_1\cap C_1$. Let $\tilde B_1:=B_1+(a+\mult_xB-1)E_1$. Then 
$$K_{X_1}+\tilde B_1+aC_1=f_1^*(K_X+\Delta).$$
We have
$$(\tilde B_1\cdot C_1)_{x_1}=(B_1\cdot C_1)_{x_1}+(a+\mult_xB-1)(E_1\cdot C_1)_{x_1}.$$
Since $C$ is smooth at $x$, $E_1\cup C_1$ is snc at $x_1$, so $(E_1\cdot C_1)_{x_1}=1$. By Lemma \ref{lem: intersection number surface},
$$(B_1\cdot C_1)_{x_1}\leq (B\cdot C)_x-B_1\cdot E_1=(B\cdot C)_x-\mult_xB<1-\mult_xB.$$
Thus 
$$(\tilde B_1\cdot C_1)_{x_1}<1-\mult_xB+(a+\mult_xB-1)=a\leq 1.$$
By induction hypothesis for the germ $(X_1\ni x_1,\tilde B_1+aC_1)$ and blowing-up at most $n-1$ times,
$$a(E_n,X,\Delta)=a(E_n,X_1,\tilde B_1+aC_1)>1-a,$$
and we finish the proof for \textbf{Case 2}.

\medskip

\noindent\textbf{Case 3}. $a+\mult_xB-1\geq 0$, $x_1\in E_1$, but $x_1\not\in C_1$. By Lemma \ref{lem: intersection number surface},
$$(B_1\cdot E_1)_{x_1}\leq B_1\cdot E_1=\mult_xB\leq (B\cdot C)_x<1$$
By induction hypothesis for the germ $(X_1\ni x_1,B_1+(a+\mult_xB-1)E_1)$ and blowing-up at most $n-1$ times and apply Lemma \ref{lem: mult is leq intersection number}, 
$$a(E_n,X,\Delta)=a(E_n,X_1,B_1+(a+\mult_xB-1)E_1)\geq 1-(a+\mult_xB-1)>1-a,$$
and we finish the proof for \textbf{Case 3}.
\end{proof}

\begin{lem}\label{lem: two times intersction number}
Let $X\ni x$ be a smooth germ, $B\geq 0$ an $\Rr$-divisor on $X$ and $C$ a prime divisor on $X$ such that $C\not\subset\Supp B$ and $C$ is smooth at $x$. Let $g_1: X_1\rightarrow X$ be the smooth blow-up of $x$ with exceptional divisor $E_1$, $C_1:=(g_1^{-1})_*C$, and $B_1:=(g_1^{-1})_*B$. Let $g_2: X_2\rightarrow X_1$ be the smooth blow-up at $x_1:=C_1\cap E_1$ with exceptional divisor $E_2$ and $B_2:=(g_2^{-1})_*B_1$. Then
$$(B\cdot C)_x\geq 2(B_2\cdot E_2).$$
\end{lem}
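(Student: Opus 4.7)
The plan is to decompose $(B\cdot C)_x$ via a single application of Lemma \ref{lem: intersection number surface}, and then bound each summand from below by $B_2\cdot E_2$. The key observation is that after performing the decomposition, one gets exactly two summands, each of which separately dominates $B_2\cdot E_2$, which is why the factor of $2$ appears.

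First, I would apply Lemma \ref{lem: intersection number surface} to the blow-up $g_1 \colon X_1 \to X$ and the pair of curves $B, C$. Since $C$ is smooth at $x$, we have $\mult_x C = 1$ and $C_1$ meets $E_1$ transversely at the single point $x_1$; hence the sum over $g_1^{-1}(x)$ collapses to a single local term and we obtain
\[
(B\cdot C)_x = (B_1\cdot C_1)_{x_1} + \mult_x B.
\]
Next, exactly as in the proof of Lemma \ref{lem: intersection number surface}, the projection formula applied to $g_2$ together with $E_2^2 = -1$ gives $B_2 \cdot E_2 = \mult_{x_1} B_1$, and the same reasoning applied to $g_1$ gives $B_1 \cdot E_1 = \mult_x B$.

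Now I bound the two summands individually. For the first: by Lemma \ref{lem: mult is leq intersection number} applied on $X_1 \ni x_1$ to $B_1$ and $C_1$, together with the fact that $C_1$ is smooth at $x_1$ (since $C$ is smooth at $x$), we get
\[
(B_1\cdot C_1)_{x_1} \;\ge\; \mult_{x_1} B_1 \cdot \mult_{x_1} C_1 \;=\; \mult_{x_1} B_1 \;=\; B_2\cdot E_2.
\]
For the second: the local intersection $(B_1\cdot E_1)_{x_1}$ is bounded above by the global intersection $B_1\cdot E_1 = \mult_x B$, while Lemma \ref{lem: mult is leq intersection number} applied to $B_1$ and $E_1$ at $x_1$ (using that $E_1$ is smooth, so $\mult_{x_1} E_1 = 1$) gives $(B_1\cdot E_1)_{x_1} \ge \mult_{x_1} B_1 = B_2 \cdot E_2$. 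Chaining these yields $\mult_x B \ge B_2\cdot E_2$.

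Adding the two bounds gives $(B\cdot C)_x \ge 2(B_2\cdot E_2)$, as required. There is no real obstacle here; the only point worth being careful about is the collapse of the sum in Lemma \ref{lem: intersection number surface} to a single term, which genuinely uses that $C$ is smooth at $x$ (so $C_1$ meets $E_1$ only at $x_1$), and the automatic equality $\mult_{x_1} C_1 = 1$ in the step that bounds $(B_1\cdot C_1)_{x_1}$.
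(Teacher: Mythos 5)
Your proof is correct and follows essentially the same route as the paper: decompose $(B\cdot C)_x$ via Lemma \ref{lem: intersection number surface} applied to $g_1$ and then bound each of the two resulting terms below by $B_2\cdot E_2$. The only (harmless) difference is that for the second blow-up you use the identity $B_2\cdot E_2=\mult_{x_1}B_1$ together with Lemma \ref{lem: mult is leq intersection number} at $x_1$, whereas the paper simply applies Lemma \ref{lem: intersection number surface} a second time to $g_2$.
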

\begin{proof}
Let $C_2:=(g_2^{-1})_*C_1$ and $E_1':=(g_2^{-1})_*E_1$. By Lemma \ref{lem: intersection number surface},
$$B_1\cdot E_1=B_2\cdot E'_1+(E'_1\cdot E_2)(B_2\cdot E_2)\geq B_2\cdot E_2$$
and
$$(B_1\cdot C_1)_{x_1}=\sum_{y\in g_2^{-1}(x_1)}(B_2\cdot C_2)_y+(B_2\cdot E_2)(C_2\cdot E_2)\geq (B_2\cdot E_2)(C_2\cdot E_2)= B_2\cdot E_2.$$
Thus
$$(B\cdot C)_x=\sum_{y\in g_1^{-1}(x)}(B_1\cdot C_1)_{y}+B_1\cdot E_1\geq (B_1\cdot C_1)_{x_1}+B_1\cdot E_1\geq 2(B_2\cdot E_2).$$
\end{proof}

\subsection{Dual graph of \texorpdfstring{$f_E$}{}}

Roughly speaking, this subsection shows that the dual graph of $f_E$ is almost always a chain when $E$ is a divisor which computes the mld. We remark that \cite[Lemma 3.18]{HL20} shows that there exists such an $E$ such that the dual graph of $f_E$ is a chain, while we show that for any such $E$, the dual graph of $f_E$ is a chain.

We need the following result of Kawakita. Notice that the ``in particular" part of the following theorem is immediate from the construction in \cite[Remark 3]{Kaw17}.
\begin{thm}[{\cite[Theorem 1, Remark 3]{Kaw17}}]\label{thm: Kaw17 1.1}
Let $(X\ni x,B)$ be a smooth lc surface germ and $E$ a divisor over $X\ni x$ such that $a(E,X,B)=\mld(X\ni x,B)$. Then there exists a weighted blow-up of $X\ni x$ which extracts $E$. In particular, $E$ is a Koll\'ar component of $X\ni x$.
\end{thm}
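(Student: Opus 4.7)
The plan is to argue by induction on $n = n(E)$, the length of the canonical sequence of smooth blow-ups from Definition-Lemma \ref{deflem: fE}. When $n = 1$, the divisor $E = E^1$ is obtained by the ordinary blow-up at $x$, which coincides with the $(1,1)$-weighted blow-up, so the conclusion is immediate.

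For $n \geq 2$, the first and main goal is to establish that the dual graph $\Dd(f_E)$ is a chain, i.e.\ every vertex has degree at most two. The strategy is to use the minimality $a(E, X, B) = \mld(X \ni x, B)$ together with the numerical estimates in the previous subsection. Specifically, at each intermediate stage $X_{i,E}$ one lets $x_i := \Center_{X_{i,E}} E$ and asks which of the curves among $E^i_i, E^{i-1}_i, \ldots, E^1_i$ and the strict transform of $B$ pass through $x_i$; minimality forces $x_i$ to be either a smooth point of exactly one of these curves or a node of exactly two of them, and forbids any blow-up center from sitting on three or more components simultaneously. To rule out the bad configurations, one applies Lemma \ref{lem: interseciton<1 imply mld does not need blow up} to the pair obtained after the first blow-up: if the intersection number $(B_1 \cdot C)_{x_1} < 1$ for a suitable smooth component $C$ through $x_1$, then no further blow-up can reach a log discrepancy as low as $\mld(X\ni x,B)$, giving the desired contradiction. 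Lemma \ref{lem: two times intersction number} provides the quantitative comparison needed to propagate this control through successive blow-ups.

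Once $\Dd(f_E)$ is known to be a chain, one reads off a pair of coprime weights $(a, b)$ from the continued-fraction expansion encoded by the chain. Choosing local coordinates $(u, v)$ at $x$ adapted to the first blow-up (so that $\{u = 0\}$ is the smooth curve at $x$ singled out by the propagation of centers along the chain, and $\{v = 0\}$ is a transverse smooth curve), one identifies the divisorial valuation $\mathrm{ord}_E$ with the monomial valuation $v_{a,b}$ given by $v_{a,b}(u) = a$ and $v_{a,b}(v) = b$. Hence $E$ is extracted by the $(a,b)$-weighted blow-up of $X\ni x$. Because a weighted blow-up of a smooth surface germ with coprime weights produces a variety $Y$ whose only singularities are two cyclic-quotient points lying on $E \cong \Pp^1$, with $-E$ ample over $X$, the pair $(Y, E)$ is plt, so $E$ is a Koll\'ar component of $X \ni x$ by Definition \ref{defn: reduced component}.

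The main obstacle will be the chain-structure step: carrying out the case analysis that shows no branching can occur in $\Dd(f_E)$. This requires carefully tracking, at each blow-up, which of the strict transforms of previously appearing components pass through the new center, and repeatedly invoking Lemma \ref{lem: interseciton<1 imply mld does not need blow up} to contradict minimality whenever a center drifts into a configuration incompatible with the monomial structure. The remaining identification with an explicit weighted blow-up is essentially toric and should follow routinely from the Hirzebruch--Jung machinery once the chain is in place.
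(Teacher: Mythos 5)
The first thing to observe is that the paper does not prove this statement at all: it is quoted from Kawakita \cite[Theorem 1, Remark 3]{Kaw17}, and the ``in particular'' part is noted to be immediate from Kawakita's weighted blow-up construction together with Lemma \ref{lem: kc is plc place}. So what you propose is an independent proof of Kawakita's theorem. That is a legitimately different route, but it means every step must stand on its own, and the step you yourself flag as the main obstacle --- that $\Dd(f_E)$ is a chain --- is precisely where the proposal has a genuine gap. The observation that each center lies on at most two of the previously created curves is automatic from simple normal crossings and carries no content; the actual content is that the center $x_i$ is never a free point of the newest curve $E^i$ once $E^i$ already has two exceptional neighbours, i.e.\ that no fork is ever created, and your sketch does not supply an argument for this. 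The tools you invoke do not suffice as described: Lemma \ref{lem: interseciton<1 imply mld does not need blow up} only produces a contradiction with minimality when a suitable local intersection number is $<1$, so in a would-be fork situation it yields only the inequality $(B_{i}\cdot E^i)_{x_i}\geq 1$ (this is exactly Claim \ref{claim: local int geq 1 on exc divisors}); to convert that into a contradiction the paper needs the extra hypotheses $(B\cdot C)_x<2$, resp.\ $(B\cdot L)_x<1$ or $(B\cdot R)_x<1$, of Lemmas \ref{lem: blow-up not intersection is chain} and \ref{lem: blow-up not intersection is chain two tails}, and no such hypothesis is available in Theorem \ref{thm: Kaw17 1.1}, where $B$ is an arbitrary lc boundary. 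Worse, those two lemmas and Lemma \ref{lem: exc is a chain} are themselves proved in the paper by invoking Theorem \ref{thm: Kaw17 1.1}, so any appeal to them, or to the assertion that $\cup_k E^k_n$ is a chain, would be circular. Kawakita's exclusion of forks is a separate, nontrivial induction on the sequence of centers that your outline does not recover.

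By contrast, the final steps of your plan are fine in outline: once one knows that the centers consist of a string of free points followed only by satellite points (which is what the chain condition amounts to), a smooth curve through the free centers exists, the Hirzebruch--Jung continued-fraction bookkeeping identifies $\mathrm{ord}_E$ with a monomial valuation in adapted coordinates, hence $E$ is extracted by a weighted blow-up; and pltness of $(Y,E)$ for a weighted blow-up of a smooth germ (two cyclic quotient points on $E\cong\Pp^1$) together with Lemma \ref{lem: kc is plc place} gives the Koll\'ar component conclusion, which is exactly how the paper handles the ``in particular'' part. So the proposal would become a proof only after the fork-exclusion argument is actually carried out, and that argument cannot be borrowed from the lemmas of Section 3 as they stand.
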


\begin{lem}\label{lem: exc is a chain}
Let $(X\ni x,B)$ be a smooth lc surface germ and $E$ a divisor over $X\ni x$ such that $a(E,X,B)=\mld(X\ni x,B)$. Then $\Dd(f_E)$ is a chain.
\end{lem}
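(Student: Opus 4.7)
By Theorem~\ref{thm: Kaw17 1.1}, there is a weighted blow-up $g\colon Y\to X$ at $x$ with coprime positive weights $(a,b)$ extracting $E$. On $Y$, the curve $E\cong\mathbb{P}^1$ contains at most two singular points of $Y$ --- the torus-fixed points $P_1,P_2\in E$ (of indices $a,b$ respectively) --- each a cyclic quotient (toric) singularity; away from $\{P_1,P_2\}$, $Y$ is smooth. The minimal resolution of each $P_i$ is given by a classical Hirzebruch--Jung chain: its exceptional divisor is a chain of smooth rational curves, meeting $E$ at exactly one endpoint.

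Since $X_E=X_n$ is smooth and extracts $E$, the birational map $X_E\dashrightarrow Y$ over $X$ extends to a birational morphism $\varphi\colon X_E\to Y$ whose exceptional divisors are precisely $E^1_n,\dots,E^{n-1}_n$. By Remark~\ref{rem: elementary lemma dual graph smooth blow up}(4), $(E^i_n)^2\leq -2$ for all $i<n$, so $\varphi$ has no $(-1)$-exceptional curves and is therefore the minimal resolution of $Y$. It follows that the divisors $E^1_n,\dots,E^{n-1}_n$ form at most two disjoint chains, one resolving each $P_i$. Combined with $E$ itself --- which meets each of these chains at exactly one endpoint --- we conclude that $\Dd(f_E)$ is a single chain.

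The main step is the application of Theorem~\ref{thm: Kaw17 1.1}, which converts the global mld-computing hypothesis into the local toric structure of a weighted blow-up; the remaining inputs are the classical fact that Hirzebruch--Jung resolutions of cyclic quotient singularities are chains, together with the self-intersection bound from Remark~\ref{rem: elementary lemma dual graph smooth blow up}(4), which identifies $X_E$ with the minimal resolution of $Y$ without any explicit toric bookkeeping. If one wished to avoid invoking Hirzebruch--Jung as a black box, an alternative route is a direct local coordinate computation tracking $v_E$ through each blow-up of $f_E$ and showing that the centers of $E$ march in lockstep with the Euclidean algorithm on $(a,b)$, inserting each new $E^i$ either as a tail or between the two most recent exceptional divisors --- which preserves the chain structure at every step.
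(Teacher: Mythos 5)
Your proof is correct and follows essentially the same route as the paper's: invoke Theorem~\ref{thm: Kaw17 1.1} to realize $E$ by a weighted blow-up $Y\to X$, note that $Y$ has at most two cyclic quotient singularities along $E$ whose Hirzebruch--Jung resolutions are chains attached to $E$ at its two ends, and identify $f_E$ with the composition of the weighted blow-up and this minimal resolution. Your extra step using Remark~\ref{rem: elementary lemma dual graph smooth blow up}(4) to verify that $X_E\to Y$ is the minimal resolution is just a slightly more explicit justification of the identification the paper states directly.
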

\begin{proof}
By Theorem \ref{thm: Kaw17 1.1}, there exists a weighted blow-up $f: Y\rightarrow X$ which extracts $E$. $E$ contains at most two singular points of $Y$ which are cyclic quotient singularities, and locally analytically, $E$ is one coordinate line of each cyclic quotient singularity. Let  $g: W\rightarrow Y$ be the minimal resolution of $Y$ near $E$, then $f\circ g=f_E$ and $\Dd(f_E)$ is a chain.
\end{proof}

\begin{lem}\label{lem: blow-up not intersection is chain}
Let $a\in [0,1]$ be a real number. Assume that
\begin{enumerate}
\item $(X\ni x,\Delta:=B+aC)$ a smooth lc surface germ, where $B\geq 0$ is an $\Rr$-divisor and $C$ is a prime divisor,
\item  $C\not\subset\Supp B$ and $C$ is smooth at $x$, and
\item $(B\cdot C)_x<2$.
\end{enumerate}
Then any divisor $E$ over $X\ni x$ such that $a(E,X,\Delta)=\mld(X\ni x,\Delta)$ satisfies the following. Let $C_E:=(f_E^{-1})_*C$, then $C_E\cup\Exc(f_E)$ is a chain and $C_E$ is one tail of $C_E\cup\Exc(f_E)$. In particular, the dual graph of $C_E\cup\Exc(f_E)$ is the following:
\begin{center}
    \begin{tikzpicture}
                                                    
         \draw[fill=black] (1.5,0) circle (0.1);
         \draw (1.6,0)--(2,0);
         \draw (2.1,0) circle (0.1);
         \draw[dashed] (2.2,0)--(3.2,0);
         \draw (3.3,0) circle (0.1);
         \draw (3.4,0)--(3.8,0);

         \draw (3.9,0) circle (0.1);
         
         
    \end{tikzpicture}
\end{center}
Here $C_E$ is denoted by the black circle.
\end{lem}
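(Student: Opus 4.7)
The plan is to proceed by strong induction on $n := n(E)$ from Definition-Lemma~\ref{deflem: fE}. A preliminary observation is that $C_E$ meets $\Exc(f_E)$ at exactly one point: since $C$ is smooth at $x$ and $f_E|_{C_E}\colon C_E \to C$ is a birational morphism of smooth curves, it is an isomorphism, so $C_E \cap \Exc(f_E) = (f_E|_{C_E})^{-1}(x)$ is a single point. Combined with Lemma~\ref{lem: exc is a chain}, which tells us $\Dd(f_E)$ is a chain, the dual graph $C_E \cup \Exc(f_E)$ is either the desired chain (with $C_E$ at a tail) or a ``$Y$''-shape (with $C_E$ hanging off a non-tail vertex). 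The goal is to rule out the ``$Y$''-shape. The base case $n=1$ is immediate: $f_E$ is the simple blow-up at $x$, and transversality of $C_E = C_1$ and $E^1 = E$ gives the chain $C_E - E^1$.

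For the inductive step $n \geq 2$, let $x_1 := \Center_{X_1}E \in E^1$ and set $c := a + \mult_x B - 1 \in [-1,1]$ (the upper bound from $a(E^1,X,\Delta) = 1 - c \geq 0$). \textbf{Case I:} $x_1 \in C_1 \cap E^1$. Apply the inductive hypothesis to the modified pair $(X_1 \ni x_1, \tilde B_1 + a C_1)$, where $\tilde B_1 := B_1 + \max(c,0)E^1$, with $C_1$ playing the role of the new ``$C$''. The pair is lc (via direct pullback when $c \geq 0$, or because removing a negative $E^1$-contribution only raises log discrepancies when $c<0$), and by Lemma~\ref{lem: intersection number surface} together with transversality of $C_1$ and $E^1$, $(\tilde B_1 \cdot C_1)_{x_1} \leq (B\cdot C)_x + a - 1 < 1 + a \leq 2$. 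The inductive hypothesis yields that $C_E \cup \{E^2_n,\dots,E^n_n\}$ is a chain with $C_E$ at a tail. A second, symmetric application with $E^1$ playing the role of ``$C$'' (valid under the analogous bound $((B_1 + aC_1) \cdot E^1)_{x_1} \leq \mult_x B + a = c + 1 \leq 2$) shows that $E^1_n$ attaches at the opposite tail. Since the blow-up at $x_1$ separates $C_2$ and $E^1_2$, we have $C_n \cap E^1_n = \emptyset$, and concatenating yields the full chain $C_E - \{E^2_n,\dots,E^n_n\} - E^1_n$ with $C_E$ at a tail.

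\textbf{Case II:} $x_1 \in E^1 \setminus C_1$. Here all subsequent blow-ups occur over a point disjoint from $C$'s strict transform, so $C_n$ meets $\Exc(f_E)$ only at the preserved point $p_1 \in E^1_n$. Apply the inductive hypothesis on $X_1$ with $E^1$ (smooth at $x_1$, not in $\Supp B_1$) playing the role of ``$C$'' and pair $(X_1 \ni x_1, B_1 + \max(c,0)E^1)$. The key intersection bound is $(B_1 \cdot E^1)_{x_1} \leq B_1 \cdot E^1 = \mult_x B \leq (B \cdot C)_x < 2$, where the first step uses the projection formula and the last uses Lemma~\ref{lem: mult is leq intersection number}. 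The inductive conclusion places $E^1_n$ at a tail of $\Exc(f_E)$, and adjoining $C_E$ to $E^1_n$ extends the chain with $C_E$ as the new tail.

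The main obstacle is the bookkeeping when $c < 0$: the literal pullback pair $(X_1,\Delta_1)$ has a negative coefficient on $E^1$ and is not effective, so one must pass to the effective pair $(X_1,\tilde B_1 + aC_1)$ and verify that $E$ still computes its mld; the point is that the comparison $a(F,X_1,\tilde B_1 + aC_1) = a(F,X,\Delta) - c\,v_F(E^1)$ behaves uniformly in $F$ (with $-c > 0$ and $v_F(E^1)\ge 1$ for all $F$ over $x_1$), so the minimum is still attained by $E$ up to a controllable adjustment that does not affect the chain structure. A second subtlety is the boundary case $c = 1$ in the symmetric step of Case~I, where the intersection bound is only $\leq 2$ and not strict; there $a(E^1,X,\Delta) = 0$, forcing $E^1$ and $E$ to both be lc places, in which case the chain property can be recovered directly from Theorem~\ref{thm: Kaw17 1.1}, interpreting $E$ as extracted by a weighted blow-up in coordinates adapted to $C$.
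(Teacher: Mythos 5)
Your inductive set-up has a genuine gap at the crucial ``concatenation'' step of Case~I. The two facts you establish there --- (a) $C_E\cup\{E^2_n,\dots,E^n_n\}$ is a chain with $C_E$ at a tail, and (b) $E^1_n\cup\{E^2_n,\dots,E^n_n\}$ is a chain with $E^1_n$ at a tail (which, incidentally, is already contained in Lemma~\ref{lem: exc is a chain} and needs no second inductive application) --- do \emph{not} imply that the full graph is a chain. Both $C_E$ and $E^1_n$ could attach to the \emph{same} tail curve of the sub-chain, say $E^2_n$, at two different points; then (a), (b) and $C_n\cap E^1_n=\emptyset$ all hold, yet $E^2_n$ has three neighbours in $C_E\cup\Exc(f_E)$ and the graph is a fork. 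This fork configuration is exactly what the lemma must exclude, and excluding it is the whole content: it happens precisely when, after the centre has travelled along $C$, some blow-up centre moves to a point of the last exceptional curve away from the strict transforms of $C$ and of the previous curves. The paper kills this configuration quantitatively: at the first index $m$ where the chain structure breaks, the centre $x_{m-1}$ lies on $E^{m-1}$ off $C_{m-1}$, Claim~\ref{claim: local int geq 1 on exc divisors} forces $(B_{m-1}\cdot E^{m-1})_{x_{m-1}}\geq 1$ (else Lemma~\ref{lem: interseciton<1 imply mld does not need blow up} would contradict $E$ computing the mld), and Lemma~\ref{lem: two times intersction number} then doubles this back to $(B\cdot C)_x\geq 2$, contradicting hypothesis (3). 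Your proposal contains no analogue of this step, so the induction does not close. (Note also that your intended stronger statement ``$C_E$ and $E^1_n$ attach at \emph{opposite} tails'' is of the two-marked-curves type; the paper proves that kind of statement separately, as Lemma~\ref{lem: blow-up not intersection is chain two tails}, and again by the quantitative argument, under the stronger hypothesis that one of the two local intersection numbers is $<1$ --- your bound $c+1\leq 2$ would not suffice even if the concatenation logic were repaired.)

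There are two further unproved assertions. First, when $c:=a+\mult_xB-1<0$ you replace the crepant (non-effective) pull-back by the effective pair $(X_1,\tilde B_1+aC_1)$ and assert that $E$ still computes its mld because the correction ``behaves uniformly in $F$''; it does not: $a(F,X_1,\tilde B_1+aC_1)=a(F,X,\Delta)+(-c)\,\mult_F(\text{pull-back of }E^1)$, and the multiplicity factor varies with $F$ (it is $1$ for the blow-up of $x_1$ and large for deep divisors), so the minimizer can change and the induction hypothesis need not apply to $E$. (One could try to show instead that $c<0$ forces $n(E)=1$, but that requires an argument of the type of Lemma~\ref{lem: interseciton<1 imply mld does not need blow up} which you do not give.) Second, in the boundary case $c=1$ your appeal to Theorem~\ref{thm: Kaw17 1.1} ``in coordinates adapted to $C$'' is not justified: Kawakita's theorem produces a weighted blow-up in \emph{some} coordinate system, with no control on its relation to $C$, and Lemma~\ref{lem: exc is a chain} already extracts from it everything about $\Dd(f_E)$ alone; it says nothing about where $C_E$ attaches. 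For these reasons the proposal, as it stands, does not prove the lemma; the paper's contradiction argument via the minimal failure index and the intersection-number estimates is not optional bookkeeping but the core of the proof.
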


\begin{proof}
By the construction of $f_E$, if  $C_E\cup\Exc(f_E)$ is a chain, then $C_E$ is a tail of  $C_E\cup\Exc(f_E)$. Let $n:=n(E)$ and let $C_i$ be the strict transform of $C$ on $X_{i,E}$ for each $i\in\{0,1,\dots,n\}$. Since $C$ is smooth at $x$, by the construction of $f_E$, $C_i\cup_{k=1}^iE^k_i$ is simple normal crossing over a neighborhood of $x$ and its dual graph does not contain a circle for each  $i\in\{0,1,\dots,n\}$. Since $a(E,X,\Delta)=\mld(X\ni x,\Delta)$, $a(E^i,X,\Delta)\geq a(E,X,\Delta)$ for every $i\in\{1,2,\dots,n\}$. 

Suppose that the lemma does not hold, then there exists a positive integer $m\in\{1,2,\dots,n\}$, such that $C_i\cup_{k=1}^iE^k_i$ is a chain, $C_i$ is a tail of $C_i\cup_{k=1}^iE^k_i$  for any $i\in\{0,1,\dots,m-1\}$, and $C_m\cup_{k=1}^mE^k_m$ is not a chain. Since any graph without a circle that is not a chain contains at least $4$ vertices, $m\geq 3$. 

By Lemma \ref{lem: exc is a chain}, $\cup_{k=1}^nE^k_n$ is a chain, hence $\cup_{k=1}^mE^k_m$ is a chain. Since $C_m\cup_{k=1}^mE^k_m$ is not a chain,
$x_{m-1}:=\Center_{X_{m-1,E}}E^m\in E^{m-1}\backslash C_{m-1}$, and for any integer $i\in\{1,2,\dots,m-1\}$, $f_{i,E}$ is the smooth blow-up of a point $x_{i-1}\in C_{i-1}$, and if $i\in\{2,3,\dots,m-1\}$, then $f_{i,E}$ is the smooth blow-up of $C_{i-1}\cap E^{i-1}$. In particular, since $m\geq 3$, $x_{m-3}\in C_{m-3}$ and $x_{m-2}\in C_{m-2}\cap E^{m-2}$.

\begin{claim}\label{claim: local int geq 1 on exc divisors}
$(B_{m-1}\cdot E^{m-1})_{x_{m-1}}\geq 1$.
\end{claim}
\begin{proof}
Let $a_{m-1}:=\max\{0,1-a(E^{m-1},X,\Delta)\}$. Since $(X\ni x,\Delta)$ is lc, $a_{m-1}\in [0,1]$. If $(B_{m-1}\cdot E^{m-1})_{x_{m-1}}<1$, then by Lemma \ref{lem: interseciton<1 imply mld does not need blow up},
\begin{align*}
    \mld(X\ni x,\Delta)&=a(E^n,X,\Delta)=a(E^n,X^{m-1},B_{m-1}+(1-a(E^{m-1},X,\Delta))E^{m-1})\\
    &\geq a(E^n,X^{m-1},B_{m-1}+a_{m-1}E^{m-1})\\
    &\geq\mld(X_{m-1}\ni x_{m-1},B_{m-1}+a_{m-1}E^{m-1})\\
    &>1-a_{m-1}=a(E^{m-1},X,\Delta),
\end{align*}
a contradiction.
\end{proof}

\noindent\textit{Proof of Lemma \ref{lem: blow-up not intersection is chain} continued}. Since $m\geq 3$, by Lemmas \ref{lem: intersection number surface}, \ref{lem: two times intersction number}, and Claim \ref{claim: local int geq 1 on exc divisors}, 
$$(B\cdot C)_x\geq (B_{m-3}\cdot C_{m-3})_{x_{m-3}}\geq 2(B_{m-1}\cdot E^{m-1})\geq 2(B_{m-1}\cdot E^{m-1})_{x_{m-1}}\geq 2,$$
which contradicts our assumptions.
\end{proof}

\begin{lem}\label{lem: blow-up not intersection is chain two tails}
Let $l,r\in [0,1]$ be two real numbers. Assume that
\begin{enumerate}
\item $(X\ni x,\Delta:=B+lL+rR)$ a smooth lc surface germ, where $B\geq 0$ is an $\Rr$-divisor and $L,R$ are two different prime divisors.
\item  $L\not\subset\Supp B, R\not\subset\Supp B$, and $(X\ni x,L+R)$ is log smooth, and
\item either $(B\cdot L)_x<1$ or $(B\cdot R)_x<1$.
\end{enumerate}
Then any divisor $E$ over $X\ni x$ such that $a(E,X,\Delta)=\mld(X\ni x,\Delta)$ satisfies the following. Let $L_E:=(f_E^{-1})_*L$ and $R_E:=(f_E^{-1})_*R$, then $L_E\cup R_E\cup\Exc(f_E)$ is a chain and $L_E,R_E$ are the tails of $L_E\cup R_E\cup\Exc(f_E)$. In particular, the dual graph of $L_E\cup R_E\cup\Exc(f_E)$ is the following:
\begin{center}
    \begin{tikzpicture}
                                                    
         \draw[fill=black] (1.5,0) circle (0.1);
         \draw (1.6,0)--(2,0);
         \draw (2.1,0) circle (0.1);
         \draw[dashed] (2.2,0)--(3.2,0);
         \draw (3.3,0) circle (0.1);
         \draw (3.4,0)--(3.8,0);

         \draw[fill=black] (3.9,0) circle (0.1);
         
         
    \end{tikzpicture}
\end{center}
Here $L_E$ and $R_E$ are denoted by the left black circle and the right black circle respectively.
\end{lem}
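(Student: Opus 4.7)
The plan is to mimic the inductive structure of the proof of Lemma~\ref{lem: blow-up not intersection is chain}. By the symmetry of the hypothesis, we may assume $(B\cdot L)_x<1$. Since $(X\ni x,L+R)$ is log smooth, $L$ and $R$ meet transversally at $x$, so after the first blow-up $f_{1,E}$ the strict transforms $L_1,R_1$ are disjoint and each meets $E^1$ at a single point, making $P_1:=L_1\cup E^1\cup R_1$ a chain with $L_1,R_1$ as its tails. Suppose the conclusion fails, and let $m\geq 2$ be the smallest integer for which $P_m:=L_m\cup R_m\cup_{k=1}^mE^k_m$ is not a chain with $L_m,R_m$ as its tails. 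By Lemma~\ref{lem: exc is a chain}, $\cup_{k=1}^mE^k_m$ is itself a chain. Since $P_{m-1}$ is a chain with $L_{m-1},R_{m-1}$ as its tails, the curve $E^{m-1}_{m-1}$ (which always contains $x_{m-1}=\Center_{X_{m-1,E}}E$ by the minimality built into Definition-Lemma~\ref{deflem: fE}) is an interior vertex of $P_{m-1}$ with exactly two neighbors. Blowing up $x_{m-1}$ preserves the chain precisely when $x_{m-1}$ lies at an intersection of $E^{m-1}_{m-1}$ with a neighbor, so the failure forces $x_{m-1}$ to be a smooth point of $E^{m-1}_{m-1}$ avoiding every other curve of $P_{m-1}$; in particular $x_{m-1}\notin L_{m-1}\cup R_{m-1}$.

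The second step is to invoke the exact analog of Claim~\ref{claim: local int geq 1 on exc divisors}: since $x_{m-1}$ avoids $L_{m-1}$, $R_{m-1}$, and $E^k_{m-1}$ for $k<m-1$, the log pullback $\Delta_{m-1}$ of $\Delta$ to $X_{m-1,E}$ coincides locally at $x_{m-1}$ with $B_{m-1}+(1-a(E^{m-1},X,\Delta))E^{m-1}$. The verbatim argument of Claim~\ref{claim: local int geq 1 on exc divisors}, applied to the germ $(X_{m-1,E}\ni x_{m-1},B_{m-1}+a_{m-1}E^{m-1})$ with $a_{m-1}:=\max\{0,1-a(E^{m-1},X,\Delta)\}$ and using Lemma~\ref{lem: interseciton<1 imply mld does not need blow up}, then yields $(B_{m-1}\cdot E^{m-1})_{x_{m-1}}\geq 1$.

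The final step propagates this lower bound back to $\mult_xB$ by exploiting the monotonicity of multiplicity under blow-ups. The projection formula gives $B_i\cdot E^i=\mult_{x_{i-1}}B_{i-1}$ for every $i\geq 1$ (set $B_0:=B$ and $x_0:=x$); combined with Lemma~\ref{lem: mult is leq intersection number} applied at the smooth point $x_i\in E^i$ (so $\mult_{x_i}E^i=1$), this yields
\[
\mult_{x_{i-1}}B_{i-1}=B_i\cdot E^i\geq(B_i\cdot E^i)_{x_i}\geq\mult_{x_i}B_i.
\]
Chaining these inequalities from $i=1$ through $i=m-2$ (an empty chain when $m=2$) and applying the Claim, we obtain
\[
\mult_xB\geq\mult_{x_{m-2}}B_{m-2}=B_{m-1}\cdot E^{m-1}\geq(B_{m-1}\cdot E^{m-1})_{x_{m-1}}\geq 1.
\]
Finally, Lemma~\ref{lem: mult is leq intersection number} at $x$ gives $(B\cdot L)_x\geq\mult_xB\cdot\mult_xL=\mult_xB\geq 1$, contradicting $(B\cdot L)_x<1$. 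The main obstacle is making the Claim's adaptation airtight: one must carefully check that the distant components $L_{m-1}, R_{m-1}$ and the earlier exceptionals contribute nothing locally at $x_{m-1}$, which follows from the failure characterization established in the first paragraph; once this is in place, the intersection-theoretic monotonicity chain propagates mechanically and the proof concludes.
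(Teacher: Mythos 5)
Your proof is correct and follows the paper's strategy almost verbatim: you take the minimal index $m\geq 2$ at which the chain structure with tails $L,R$ fails, observe that failure forces $x_{m-1}\in E^{m-1}\setminus(L_{m-1}\cup R_{m-1})$, and then run the same claim as the paper (via Lemma \ref{lem: interseciton<1 imply mld does not need blow up}) to obtain $(B_{m-1}\cdot E^{m-1})_{x_{m-1}}\geq 1$. The only divergence is the concluding step: the paper finishes with the chains of local intersection numbers $(B\cdot L)_x\geq (B_{m-2}\cdot L_{m-2})_{x_{m-2}}\geq (B_{m-1}\cdot E^{m-1})_{x_{m-1}}$ and the analogue for $R$, whereas you propagate multiplicities, $\mult_xB\geq\mult_{x_1}B_1\geq\dots\geq\mult_{x_{m-2}}B_{m-2}=B_{m-1}\cdot E^{m-1}\geq 1$, and then use $(B\cdot L)_x\geq\mult_xB\cdot\mult_xL$ (and likewise for $R$). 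Your variant is in fact the more careful one: the paper's inequality $(B_{m-2}\cdot L_{m-2})_{x_{m-2}}\geq(B_{m-1}\cdot E^{m-1})_{x_{m-1}}$ implicitly requires $x_{m-2}\in L_{m-2}$ (and the second chain requires $x_{m-2}\in R_{m-2}$), which need not hold once $m\geq 3$, for instance when $x_{m-2}$ is the intersection of two exceptional curves; the multiplicity route gives $(B\cdot L)_x\geq 1$ and $(B\cdot R)_x\geq 1$ simultaneously without that assumption and so contradicts hypothesis (3) in all cases. The one caveat, shared with the paper, is the implicit assumption $x\in L\cap R$ (as in the application to Theorem \ref{thm: a type kc}), and your Claim step inherits exactly the paper's treatment of the coefficient $\max\{0,1-a(E^{m-1},X,\Delta)\}$, so no new gap is introduced there.
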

\begin{proof}
By the construction of $f_E$, if $L_E\cup R_E\cup\Exc(f_E)$ is a chain, then $L_E,R_E$ are the tails of $L_E\cup R_E\cup\Exc(f_E)$. Let $n:=n(E)$ and let $L_i,R_i$ be the strict transforms of $L_i,R_i$ on $X_{i,E}$ for each $i\in\{0,1,\dots,n\}$. Since $(X\ni x,L+R)$ is log smooth and by the construction of $f_E$, $L_i\cup R_i\cup_{k=1}^iE^k_i$ is simple normal crossing over a neighborhood of $x$ and its dual graph does not contain a circle for each $i\in\{0,1,\dots,n\}$. Since $a(E,X,\Delta)=\mld(X\ni x,\Delta)$, $a(E^i,X,\Delta)\geq a(E,X,\Delta)$ for every $i\in\{1,2,\dots,n\}$.

Suppose that the lemma does not hold, then there exists a positive integer $m\in\{1,2,\dots,n\}$, such that $L_i\cup R_i\cup_{k=1}^iE^k_i$ is a chain and $L_i,R_i$ are the tails of $L_i\cup R_i\cup_{k=1}^iE^k_i$ for any $i\in\{0,1,\dots,m-1\}$, and $L_m\cup R_m\cup_{k=1}^mE^k_m$ is not a chain. Since any graph without a circle that is not a chain contains at least $4$ points, $m\geq 2$. 

By Theorem \ref{thm: Kaw17 1.1}, $\cup_{k=1}^nE^k_n$ is a chain, hence $\cup_{k=1}^mE^k_m$ is a chain. By the construction of $f_E$ and the choice of $m$, $x_{m-1}:=\Center_{X_{m-1,E}}E^m\in E^{m-1}\backslash (L_{m-1}\cup R_{m-1})$.

\begin{claim}\label{claim: local int geq 1 on exc divisors second claim}
$(B_{m-1}\cdot E^{m-1})_{x_{m-1}}\geq 1$.
\end{claim}
\begin{proof}
Let $a_{m-1}:=\max\{0,1-a(E^{m-1},X,\Delta)\}$. Since $(X\ni x,\Delta)$ is lc, $a_{m-1}\in [0,1]$. If $(B_{m-1}\cdot E^{m-1})_{x_{m-1}}<1$, then by Lemma \ref{lem: interseciton<1 imply mld does not need blow up},
\begin{align*}
    \mld(X\ni x,\Delta)&=a(E^n,X,\Delta)=a(E^n,X^{m-1},B_{m-1}+(1-a(E^{m-1},X,\Delta))E^{m-1})\\
    &\geq a(E^n,X^{m-1},B_{m-1}+a_{m-1}E^{m-1})\\
    &\geq\mld(X_{m-1}\ni x_{m-1},B_{m-1}+a_{m-1}E^{m-1})\\
    &>1-a_{m-1}=a(E^{m-1},X,\Delta),
\end{align*}
a contradiction.
\end{proof}

\noindent\textit{Proof of Lemma \ref{lem: blow-up not intersection is chain two tails} continued}.
Since $m\geq 2$, by Lemma \ref{lem: intersection number surface} and Claim \ref{claim: local int geq 1 on exc divisors second claim}, 
$$(B\cdot L)_x\geq (B_{m-2}\cdot L_{m-2})_{x_{m-2}}\geq (B_{m-1}\cdot E^{m-1})_{x_{m-1}}\geq 1$$
and
$$(B\cdot R)_x\geq (B_{m-2}\cdot R_{m-2})_{x_{m-2}}\geq (B_{m-1}\cdot E^{m-1})_{x_{m-1}}\geq 1$$
which contradict our assumptions.
\end{proof}

\section{Classification of divisors computing mlds}

\subsection{A key lemma}

The following lemma is similar to \cite[Theorem 4.15]{KM98} and plays an important role in the proof of our main theorems.
\begin{lem}\label{lem: km98 4.15 generalization}
Let $m$ be a non-negative integer, $(X\ni x,B)$ a plt surface germ, $f: Y\rightarrow X$ the minimal resolution of $X\ni x$, and $B_Y:=f^{-1}_*B$. Then
\begin{enumerate}
\item for any prime divisor $F\subset\Exc(f)$, $B_Y\cdot F<2$,
\item there exists at most one prime divisor $F\subset\Exc(f)$ such that $B_Y\cdot F\geq 1$, and
\item if $E\subset\Exc(f)$ is a prime divisor such that  $B_Y\cdot E\geq 1$, then $X\ni x$ is an $A$-type singularity and $E$ is a tail of $\Dd(f)$.
\end{enumerate}
\end{lem}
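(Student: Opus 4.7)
The plan is to analyze the pullback-adjunction data on the minimal resolution $f\colon Y\to X$. Writing
\[
f^*(K_X+B)=K_Y+B_Y+\sum_i e_iE_i,\qquad e_i:=1-a(E_i,X,B),
\]
plt of $(X\ni x,B)$ gives $e_i<1$, and (since $B\ge 0$) the pair $(X\ni x,0)$ is then klt, so on the minimal resolution $e_i\ge 0$; moreover each $E_i\cong\mathbb P^1$, $E_i^2\le -2$, and $E_i\cdot E_j\in\{0,1\}$ for $i\ne j$. Intersecting with $F=E_k$ and using $K_Y\cdot F=-2-F^2$ gives the master identity
\[
(\ast)\qquad B_Y\cdot F=2+(1-e_k)F^2-\sum_{i\ne k}e_i(E_i\cdot E_k).
\]
Part (1) is immediate from $(\ast)$: $F^2\le -2$ and the sum is $\ge 0$, hence $B_Y\cdot F\le 2e_k<2$.

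For part (3), assume $B_Y\cdot E\ge 1$ and write $N(E)$ for the neighbors of $E$ in $\Dd(f)$. I would extract two consequences of $(\ast)$: applied at $E$ it yields $\sum_{G\in N(E)}e_G\le 2e_E-1$, and applied at any neighbor $G$ together with $B_Y\cdot G\ge 0$ it yields $e_G\ge e_E/2$. Summing the second bound over $N(E)$ and comparing with the first forces $|N(E)|\cdot e_E/2\le 2e_E-1$, so $|N(E)|\ge 2$ would imply $e_E\ge 1$, contradicting plt. Hence $E$ is a tail. To rule out that $\Dd(f)$ has a fork distinct from $E$, assume such a fork $F$ exists, consider the path $E=F_0-F_1-\cdots-F_r=F$, and set $a_j:=1-e_{F_j}$. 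The inequality $B_Y\cdot F_j\ge 0$ via $(\ast)$ yields the convexity relation $a_{j-1}+a_{j+1}\ge 2a_j$ at each interior vertex (off-path neighbors only strengthen this). The endpoint $F_0=E$ contributes the strict initial inequality $a_1-a_0\ge a_0>0$; the fork endpoint $F_r$, using $e_G\ge e_r/2$ at each off-path neighbor $G$, contributes $e_{r-1}\le e_r$, equivalently $a_r-a_{r-1}\le 0$. For $r\ge 2$, convexity (non-decreasing differences) propagates the strict positivity of the first difference to the last, contradicting the fork bound; for $r=1$, direct combination of the two endpoint inequalities gives $e_0\ge 1$, again contradicting plt. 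This completes part (3).

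For part (2), the same convexity mechanism applies to the path $E=F_0-\cdots-F_s=E'$ between two distinct divisors with $B_Y\cdot\,\ge 1$: both endpoint conditions produce $a_1-a_0>0$ and $a_s-a_{s-1}<0$, which is incompatible with convex non-decreasing differences when $s\ge 2$; the case $s=1$ is handled by directly combining $e_1\le 2e_0-1$ and $e_0\le 2e_1-1$ to get $e_0\ge 1$.

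The main obstacle will be the bookkeeping at the fork endpoint---extracting the bound $e_{r-1}\le e_r$ correctly when $F_r$ has more than three neighbors or self-intersection strictly below $-2$---together with cleanly handling the short-path edge cases $r=1$ and $s=1$ separately from the convexity argument.
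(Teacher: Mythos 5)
Your argument is correct, and it reaches the conclusion by a genuinely different mechanism than the paper. The paper forms an extended graph $\bar\Dd(f)$ by adding one vertex $v_0$ for $B_Y$, joined to the vertex of $F_i$ by $\lfloor B_Y\cdot F_i\rfloor$ edges, and then rules out circles and forks of $\bar\Dd(f)$ by applying the negative-definiteness comparison lemma (Lemma 2.12, the analogue of \cite[Lemma 3.41, Corollary 4.2]{KM98}) to suitably chosen test divisors $H$ versus the discrepancy divisor $A$; the conclusion is that $\bar\Dd(f)$ is a chain with $v_0$ a tail, from which (1)--(3) all follow at once. You instead stay with the single identity obtained by intersecting $f^*(K_X+B)=K_Y+B_Y+\sum e_iE_i$ with each exceptional curve, extract the local inequalities $\sum_{G\in N(E)}e_G\le 2e_E-1$ and $e_G\ge e_E/2$, and run a discrete convexity (monotone differences) argument along paths; I checked the endpoint and interior estimates and the short-path cases $r=1$, $s=1$, and they are all sound, with plt ($e_i<1$) delivering the contradiction exactly as claimed. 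Your route avoids the comparison lemma entirely and is arguably more elementary, but it leans on the standard structure of minimal resolutions of klt surface germs (exceptional curves are smooth rational with $E_i^2\le-2$, $e_i\in[0,1)$, and simple pairwise intersections forming a tree) — facts the paper also uses in part, though its Claims 4.2--4.3 re-derive the no-circle/no-fork structure of $\bar\Dd(f)$ rather than quoting the classification; if you want your write-up self-contained you should either cite that structure explicitly or note that your tail computation (degree of $E$ at most one) already excludes the cyclic configurations. The paper's formulation via $\bar\Dd(f)$ also has the advantage that the same test-divisor technique is reused in Lemmas 4.4 and 4.6, whereas your convexity argument is tailored to this lemma.
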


\begin{center}
    \begin{tikzpicture}
                                                    
         \draw (1.5,0) circle (0.1);
         \node [above] at (1.5,0.1) {\footnotesize$E$};
         \draw (1.6,0)--(2,0);
         \draw (2.1,0) circle (0.1);
         \draw[dashed] (2.2,0)--(3.2,0);
         \draw (3.3,0) circle (0.1);
         \draw (3.4,0)--(3.8,0);

         \draw (3.9,0) circle (0.1);
         
         
    \end{tikzpicture}
\end{center}

\begin{proof}
Let $F_1,\dots,F_m$ be the prime exceptional divisors of $h$ for some positive integer $m$, and let $v_1,\dots,v_m$ be the vertices corresponding to $F_1,\dots,F_m$ in $\Dd(f)$ respectively. We construct an extended graph $\bar\Dd(f)$ in the following way:
\begin{itemize}
    \item The vertices of $\bar\Dd(f)$ are $v_0,v_1,\dots,v_m$.
    \item For any $i,j\in\{1,2,\dots,m\}$, $v_i$ and $v_j$ are connected by a line if and only if $v_i$ and $v_j$ are connected by a line in $\Dd(f)$.
    \item For any $i\in\{1,2,\dots,m\}$, $v_0$ and $v_i$ are connected by $\lfloor B_Y\cdot F_i\rfloor$ lines.
\end{itemize}
Moreover, we may write
$$K_Y+B_Y-\sum_{i=1}^ma_iF_i=f^*(K_X+B),$$
where $a_i:=a(F_i,X,B)-1$. Since $(X\ni x,B)$ is plt and $f$ is the minimal resolution of $X\ni x$, $0\geq a_i>-1$ for each $i$. Let $A:=\sum_{i=1}^ma_iF_i$.

If $\bar\Dd(f)$ is not connected, then $B_Y\cdot F_i<1$ for each $i$ and there is nothing left to prove. Therefore, we may assume that $\bar\Dd(f)$ is connected.

\begin{claim}\label{claim: extended graph does not contain circle}
 $\bar\Dd(f)$ does not contain a circle.
\end{claim}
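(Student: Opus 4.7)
The plan is to argue by contradiction: assume $\bar\Dd(f)$ contains a circle.

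\textbf{Structure of $\Dd(f)$.} Since $(X\ni x,B)$ is plt, in particular $X\ni x$ is klt, hence a rational surface singularity. Standard theory of rational surface singularities implies each $F_i$ is a smooth rational curve, the intersection matrix $(F_i\cdot F_j)$ is negative definite, and $\Dd(f)$ itself is a tree of rational curves. Consequently, any circle in $\bar\Dd(f)$ must involve the auxiliary vertex $v_0$.

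\textbf{Shape of the circle.} Since $v_0$ is connected to $v_i$ by $\lfloor B_Y\cdot F_i\rfloor$ edges, a minimal circle through $v_0$ falls into exactly one of two types:
\begin{itemize}
\item[(I)] \emph{Digon}: two parallel edges $v_0 v_i$, forcing $\lfloor B_Y\cdot F_i\rfloor\geq 2$, hence $B_Y\cdot F_i\geq 2$;
\item[(II)] \emph{Longer polygon}: $v_0,v_{i_1},\ldots,v_{i_k},v_0$ with $k\geq 2$, where $v_{i_1},\ldots,v_{i_k}$ is a simple path in $\Dd(f)$ and $B_Y\cdot F_{i_1}, B_Y\cdot F_{i_k}\geq 1$.
\end{itemize}

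\textbf{Key identity.} Intersecting $K_Y+B_Y-A=f^*(K_X+B)$ with $F_j$ and using $K_Y\cdot F_j=-2-F_j^2$ (adjunction for $F_j\cong \bP^1$) together with $F_j^2\leq -2$ (since $f$ is the minimal resolution of a rational singularity), I obtain
\[
A\cdot F_j=B_Y\cdot F_j-2-F_j^2\geq B_Y\cdot F_j\geq 0 \qquad\text{for every } j.
\]
Writing $-A=\sum b_j F_j$ with $b_j=-a_j\in[0,1)$ (the plt bound on exceptional divisors), this equation reads $M\mathbf{b}=\mathbf{r}$ where $M=(F_i\cdot F_j)$ is negative definite and $r_j=2+F_j^2-B_Y\cdot F_j\leq 0$.

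\textbf{Deriving the contradiction.} The cycle assumption enhances the inequalities: in Case (I), $A\cdot F_i\geq 2$; in Case (II), $A\cdot F_{i_1}\geq 1$ and $A\cdot F_{i_k}\geq 1$. I then introduce an auxiliary divisor $H$ supported on the circle indices (the digon vertex $\{i\}$ in Case (I); the whole path $\{i_1,\ldots,i_k\}$ in Case (II)) with coefficients in $\{0,1\}$, designed so that $H\cdot F_j\leq A\cdot F_j$ holds for every $j$ (exploiting the enhanced bounds at cycle vertices and the fact that $A\cdot F_j\geq 0$ everywhere). Lemma~\ref{lem: intersection number compare coefficients} then forces the componentwise dichotomy $A=H$ or $A>H$; in either dichotomy the coefficient at some circle vertex is driven outside the plt range $(-1,0]$, producing $b_j\geq 1$ for some $j$, a contradiction to plt.

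\textbf{Expected main obstacle.} The delicate step is selecting the auxiliary divisor $H$ so that the inequality $H\cdot F_j\leq A\cdot F_j$ holds simultaneously for every vertex of $\Dd(f)$, including those outside the circle — the inequality is automatic at the cycle endpoints thanks to the boost $B_Y\cdot F_{i_1}, B_Y\cdot F_{i_k}\geq 1$ (or $B_Y\cdot F_i\geq 2$), but bookkeeping along the interior vertices of the path and along branches meeting the path requires care. Once the right $H$ is chosen, Lemma~\ref{lem: intersection number compare coefficients} closes the argument essentially by linear algebra on the negative-definite form.
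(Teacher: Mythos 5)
Your plan is essentially the paper's own argument: the same reduction (since $\Dd(f)$ is a tree, any circle in $\bar\Dd(f)$ passes through $v_0$, giving either the digon case $\lfloor B_Y\cdot F_i\rfloor\geq 2$ or the longer-polygon case with $B_Y\cdot F_{k_1},B_Y\cdot F_{k_r}\geq 1$), the same test-divisor-plus-negative-definiteness mechanism via Lemma \ref{lem: intersection number compare coefficients}, and the same final contradiction that some circle vertex gets log discrepancy $\leq 0$, violating pltness. The one point to fix is the sign bookkeeping in your last step: the paper takes $H:=-\sum_{i=1}^{r}F_{k_i}$ (coefficient $-1$ on each circle vertex, $0$ elsewhere), so that $H\cdot F\leq A\cdot F$ is $K_Y\cdot F\leq (K_Y+B_Y)\cdot F$ at interior circle vertices, follows from your boost at the endpoints (or digon vertex), and is $H\cdot F\leq 0\leq A\cdot F$ off the circle; with your normalization ($-A=\sum b_jF_j$ and $H$ with coefficients in $\{0,1\}$) you must instead verify $(-A)\cdot F_j\leq H\cdot F_j$ and apply the lemma with the two roles swapped to force $b_{k_i}\geq 1$, since the inequality $H\cdot F_j\leq A\cdot F_j$ as you literally wrote it would only yield $a_j\leq h_j$ and no contradiction.
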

\begin{proof}
Suppose that $\bar\Dd(f)$ contains a circle. We let $v_{k_0}:=v_0,v_{k_1},\dots,v_{k_r}$ be the vertices of this circle for some positive $r$ such that $\bar\Dd(f)$ contains one of the following sub-graphs:
 \begin{center}
    \begin{tikzpicture}
                                                    
         \draw (1.5,0) circle (0.1);
         \node [above] at (1.5,0.1) {\footnotesize$v_{k_1}$};
         \draw (1.6,0)--(2,0);
         \draw (2.1,0) circle (0.1);
         \node [above] at (2.1,0.1) {\footnotesize$v_{k_2}$};
         \draw[dashed] (2.2,0)--(3.2,0);
         \draw[fill=black] (2.7,-0.6) circle (0.1);
         \node [below] at (2.7,-0.7) {\footnotesize$v_0$};
         \draw (2.8,-0.6)--(3.9,-0.1);
         \draw (2.6,-0.6)--(1.5,-0.1);
        
         \draw (3.3,0) circle (0.1);
         \node [above] at (3.3,0.1) {\footnotesize$v_{k_{r-1}}$};
         \draw (3.4,0)--(3.8,0);

         \draw (3.9,0) circle (0.1);
         
         
         \node [above] at (4.0,0.1) {\footnotesize$v_{k_r}$};
        
         \draw (5.1,0)[fill=black] circle (0.1);
         \node [right] at (5.8,0) {\footnotesize$v_{k_1}$};
         \draw (5.7,0) circle (0.1);
         \node [left] at (5.0,0) {\footnotesize$v_0$};
         \draw (5.1,0.1)..controls (5.4,0.25)..(5.7,0.1);
         \draw (5.1,-0.1)..controls (5.4,-0.25)..(5.7,-0.1);
        
    \end{tikzpicture}
\end{center}
Let $H:=-\sum_{i=1}^rF_{k_i}$. Then
$$H\cdot F_{k_i}=-2-F_{k_i}^2=K_Y\cdot F_{k_i}\leq (K_Y+B_Y)\cdot F_{k_i}=A\cdot F_{k_i}$$
for each $i\in\{2,3,\dots,r-1\}$,
$$H\cdot F_{k_i}=-1-F_{k_i}^2=K_Y\cdot F_{k_i}+1\leq (K_Y+B_Y)\cdot F_{k_i}=A\cdot F_{k_i}$$
for each $i\in\{1,r\}$ when $r\geq 2$,
$$H\cdot F_{k_1}=-F_{k_1}^2=K_Y\cdot F_{k_i}+2\leq (K_Y+B_Y)\cdot F_{k_i}=A\cdot F_{k_i}$$
when $r=1$, and
$$H\cdot F_{i}\leq 0\leq(K_Y+B_Y)\cdot F_{i}=A\cdot F_{i}$$
for each $i\not\in\{k_1,\dots,k_r\}$. By Lemma \ref{lem: intersection number compare coefficients}, $a_{k_i}\leq -1$ for each $i$, a contradiction.
\end{proof}

\begin{claim}\label{claim: extended graph does not contain fork}
$\bar\Dd(f)$ does not contain a fork.
\end{claim}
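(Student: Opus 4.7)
The plan is to mirror the strategy of Claim \ref{claim: extended graph does not contain circle}: assume $\bar\Dd(f)$ contains a fork at a vertex $v_*$ with three distinct neighbors $v_{k_1},v_{k_2},v_{k_3}$, construct an $\Rr$-divisor $H=\sum b_iF_i$ supported on $\Exc(f)$ with $b_i\leq 0$, $H\neq 0$, and some $b_j=-1$, then verify $H\cdot F_i\leq A\cdot F_i$ for every $i$. Since the intersection matrix of $\Exc(f)$ is negative definite, Lemma \ref{lem: intersection number compare coefficients} then forces either $a_i=b_i$ for every $i$ or $a_i<b_i$ for every $i$, so in particular $a_j\leq-1$, contradicting the plt hypothesis $a_j>-1$. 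The universal baseline estimate I will use is $A\cdot F_i=(-2-F_i^2)+B_Y\cdot F_i\geq 0$, which follows from $F_i^2\leq-2$ and $B_Y\cdot F_i\geq 0$.

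First I would handle the easier case $v_*=v_0$. Here $B_Y\cdot F_{k_l}\geq 1$ for $l=1,2,3$. Since $\Dd(f)$ is a tree (the minimal resolution of a klt surface singularity has a tree as its dual graph), I would take $T$ to be the minimal subtree of $\Dd(f)$ spanning $\{F_{k_1},F_{k_2},F_{k_3}\}$ and set $H=-\sum_{F\in T}F$. The leaves of $T$ are automatically among $\{F_{k_1},F_{k_2},F_{k_3}\}$, so the inequality $H\cdot F\leq A\cdot F$ at each leaf reduces to $B_Y\cdot F\geq 1$; at interior vertices of $T$ and at vertices outside $T$ it follows from $F^2\leq-2$ and $A\cdot F\geq 0$.

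Next I would treat $v_*=v_j$ with $j\geq 1$. Letting $F_{k_1},\dots,F_{k_n}$ denote the exceptional neighbors of $F_j$ in $\Dd(f)$, the fork condition at $v_j$ forces either $n\geq 2$ together with $B_Y\cdot F_j\geq 1$, or $n\geq 3$ with $B_Y\cdot F_j<1$. In the first subcase I would take $H=-F_j-\tfrac{1}{n}\sum_{l=1}^n F_{k_l}$; in the second, the connectedness of $\bar\Dd(f)$ supplies some $F_{i^*}$ with $B_Y\cdot F_{i^*}\geq 1$ lying on a branch of $F_j$ (say through $F_{k_1}$), and I would take $H=-\sum_{F\in P}F-\tfrac{1}{2}(F_{k_2}+F_{k_3})$, where $P$ is the unique $\Dd(f)$-path from $F_j$ to $F_{i^*}$. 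A direct computation using $F_i^2\leq-2$ then reduces every inequality $H\cdot F_i\leq A\cdot F_i$ either to one of the assumed lower bounds on $B_Y\cdot F$ at a distinguished vertex or to the automatic bound $A\cdot F\geq 0$.

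The hard part is this second case $v_*=v_j$ with $j\geq 1$, because typically only one vertex (rather than three, as when $v_*=v_0$) is known to satisfy $B_Y\cdot F\geq 1$. The key trick is to allow fractional coefficients like $-\tfrac{1}{n}$ or $-\tfrac{1}{2}$ on the auxiliary branch vertices: the slack afforded by $F^2\leq-2$ at those vertices absorbs the missing ``leaf'' bounds there, while keeping $b_j=-1$ at the central vertex of the configuration ensures that Lemma \ref{lem: intersection number compare coefficients} still yields $a_j\leq-1$ and the desired contradiction with plt.
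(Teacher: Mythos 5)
Your proposal is correct and takes essentially the same route as the paper: a test divisor with coefficient $-1$ along a path ending at the fork and coefficient $-\tfrac12$ (or $-\tfrac1n$) on the remaining branch curves, compared with $A$ via Lemma \ref{lem: intersection number compare coefficients} to force some $a_j\leq -1$, contradicting the plt hypothesis. The only organizational difference is that you handle the case where $v_0$ itself is the fork directly with a Steiner-tree divisor, whereas the paper excludes that case at once using Claim \ref{claim: extended graph does not contain circle} together with the connectedness of $\Dd(f)$; your path from the fork to a vertex $F_{i^*}$ with $B_Y\cdot F_{i^*}\geq 1$ is exactly the paper's $H=-\sum_{i=1}^{r}F_{k_i}-\tfrac12 F_{l_1}-\tfrac12 F_{l_2}$ read from the other end.
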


\begin{proof}
Assume that $\bar\Dd(f)$ contains a fork. 
By Claim \ref{claim: extended graph does not contain circle},  $\bar\Dd(f)$ does not contain a circle. Therefore, $v_0$ is not a fork of $\bar\Dd(f)$. In particular, there exist a positive integer $r$ and vertices $v_{k_0}:=v_0,v_{k_1},\dots,v_{k_r},v_{l_1},v_{l_2}$ such that $\bar\Dd(f)$ contains the following sub-graph:
\begin{center}
    \begin{tikzpicture}
                                                    
         \draw[fill=black] (1.5,0) circle (0.1);
         \node [above] at (1.5,0.1) {\footnotesize$v_0$};
         \draw (1.6,0)--(2,0);
         \draw (2.1,0) circle (0.1);
         \node [above] at (2.1,0.1) {\footnotesize$v_{k_1}$};
         \draw[dashed] (2.2,0)--(3.2,0);
         \draw (3.3,0) circle (0.1);
         \node [above] at (3.3,0.1) {\footnotesize$v_{k_{r-1}}$};
         \draw (3.4,0)--(3.8,0);

         \draw (3.9,0) circle (0.1);
         \draw (3.9,0.6) circle (0.1);
         \draw (3.9,-0.6) circle (0.1);
         
         \draw (3.9,0.1)--(3.9,0.5);
         \draw (3.9,-0.1)--(3.9,-0.5);
         
         \node [right] at (4,0) {\footnotesize$v_{k_r}$};
         \node [right] at (4,0.6) {\footnotesize$v_{l_2}$};
         \node [right] at (4,-0.6) {\footnotesize$v_{l_1}$};
    \end{tikzpicture}
\end{center}
Let $H:=-\sum_{i=1}^rF_{k_i}-\frac{1}{2}F_{l_1}-\frac{1}{2}F_{l_2}$. Then
$$H\cdot F_{l_i}=-1-\frac{1}{2}F_{l_i}^2\leq -2-F^2_{l_i}=K_Y\cdot F_{l_i}\leq(K_Y+B_Y)\cdot F_{l_i}=A\cdot F_{l_i}$$
for each $i\in\{1,2\}$,
$$H\cdot F_{k_i}=-2-F_{k_i}^2=K_Y\cdot F_{k_i}\leq (K_Y+B_Y)\cdot F_{k_i}=A\cdot F_{k_i}$$
for each $i\in\{2,3,\dots,r-1\}$,
$$H\cdot F_{k_i}=-1-F_{k_i}^2=K_Y\cdot F_{k_i}+1\leq (K_Y+B_Y)\cdot F_{k_i}=A\cdot F_{k_i}$$
for $i=1$, and
$$H\cdot F_{i}\leq 0\leq(K_Y+B_Y)\cdot F_{i}=A\cdot F_{i}.$$
for each $i\not\in\{k_1,\dots,k_r,l_1,l_2\}$. By Lemma \ref{lem: intersection number compare coefficients}, $a_{k_r}\leq -1$, a contradiction.
\end{proof}

\noindent\textit{Proof of Lemma \ref{lem: km98 4.15 generalization} continued}. 
By Claims \ref{claim: extended graph does not contain circle} and \ref{claim: extended graph does not contain fork}, $\bar\Dd(f)$ does not contain a circle or a fork. Therefore, $\bar\Dd(f)$ is a chain. Since $\Dd(f)$ is connected and $\bar\Dd(f)$ has $\Dd(f)$ as a sub-graph, $\Dd(f)$ is a chain and $v_0$ is a tail of $\bar\Dd(f)$. The lemma immediately follows from the structure of $\bar\Dd(f)$ and $\Dd(f)$.
\end{proof}

\subsection{\texorpdfstring{$A$}{}-type singularities}

\begin{lem}\label{lem: a type is plt}
Let $X\ni x$ be a surface germ of $A$-type, $E$ a prime divisor on $X$, $f: Y\rightarrow X$ the minimal resolution of $X\ni x$, and $E_Y:=f^{-1}_*E$. Let $F_1,\dots,F_m$ be the prime exceptional divisors over $X\ni x$. Assume that $E_Y\cup_{i=1}^mF_i$ is simple normal crossing over a neighborhood of $x$ and the dual graph of $E_Y\cup_{i=1}^mF_i$ is the following:
\begin{center}
    \begin{tikzpicture}
                                                    
         \draw[fill=black] (1.5,0) circle (0.1);
         \node [above] at (1.5,0.1) {\footnotesize$E_Y$};
         \draw (1.6,0)--(2,0);
         \draw (2.1,0) circle (0.1);
         \node [above] at (2.1,0.1) {\footnotesize$F_1$};
         \draw[dashed] (2.2,0)--(3.2,0);
         \draw (3.3,0) circle (0.1);
         \node [above] at (3.3,0.1) {\footnotesize$F_{m-1}$};
         \draw (3.4,0)--(3.8,0);

         \draw (3.9,0) circle (0.1);
         
         
         \node [above] at (4.0,0.1) {\footnotesize$F_m$};
    \end{tikzpicture}
\end{center}
Then $(X\ni x,E)$ is plt.
\end{lem}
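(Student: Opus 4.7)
The plan is to reduce plt-ness to a numerical inequality on the minimal resolution, and then pin down that inequality by comparing the log-pullback divisor to the explicit trial divisor $H:=\sum_{i=1}^{m}F_{i}$, invoking Lemma \ref{lem: intersection number compare coefficients}.

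First I would write
\[
f^{*}(K_{X}+E)=K_{Y}+E_{Y}+\sum_{i=1}^{m}b_{i}F_{i}
\]
with $b_{i}:=1-a(F_{i},X,E)$, and note that since $E_{Y}\cup\bigcup F_{i}$ is snc over $x$, the morphism $f$ is already a log resolution of $(X,E)$. Hence it suffices to verify $b_{i}<1$ for every $i$: this gives $\lfloor E_{Y}+\sum b_{i}F_{i}\rfloor=E_{Y}$ (irreducible) on an snc pair, which forces $(Y,E_{Y}+\sum b_{i}F_{i})$ to be plt, and hence $(X\ni x,E)$ is plt. Intersecting the displayed equation with each $F_{j}$ and using adjunction $K_{Y}\cdot F_{j}=-2-F_{j}^{2}=e_{j}-2$ (with $e_{j}:=-F_{j}^{2}\ge 2$), together with $E_{Y}\cdot F_{1}=1$ and $E_{Y}\cdot F_{j}=0$ for $j>1$, yields
\[
A\cdot F_{1}=1-e_{1},\qquad A\cdot F_{j}=2-e_{j}\ \ (1<j\le m),
\]
where $A:=\sum b_{i}F_{i}$.

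Next I would take the trial divisor $H:=\sum_{i=1}^{m}F_{i}$ and compute $H\cdot F_{j}$ from the chain structure. For $m\ge 2$ one gets $H\cdot F_{1}=1-e_{1}$, $H\cdot F_{j}=2-e_{j}$ for $1<j<m$, and $H\cdot F_{m}=1-e_{m}$; for $m=1$ one gets $H\cdot F_{1}=-e_{1}$. In either case, $H\cdot F_{j}\le A\cdot F_{j}$ for every $j$, with strict inequality at $j=m$ (or at $j=1$ when $m=1$) of size exactly $1$, coming from the fact that the tail $F_{m}$ of the chain does not meet $E_{Y}$. Since the intersection matrix $\{F_{i}\cdot F_{j}\}$ of the exceptional divisors of a resolution of a normal surface singularity is negative definite, Lemma \ref{lem: intersection number compare coefficients} applies and forces $b_{i}<1$ strictly for every $i$, as the two divisors $H$ and $A$ cannot coincide.

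Finally I would conclude that $a(F_{i},X,E)>0$ for each exceptional $F_{i}$ on the log resolution $f$, which by the observation in the first paragraph gives plt-ness of $(X\ni x,E)$. The main obstacle is identifying the correct trial divisor $H$: one needs a combination whose intersection numbers match $A\cdot F_{j}$ on the nose at all but one vertex, and are strictly smaller at that remaining vertex. The choice $H=\sum F_{i}$ works precisely because the attachment point of $E_{Y}$ is a tail of the chain, so the ``defect'' $E_{Y}\cdot F_{1}=1$ is absorbed by the end-effect at $F_{1}$, leaving a strict one-sided defect of $1$ at the opposite tail $F_{m}$; this is exactly where the $A$-type chain geometry enters the argument.
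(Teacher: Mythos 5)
Your proof is correct and takes essentially the same route as the paper: both write the crepant pullback on the minimal resolution and compare its exceptional part with the trial divisor $\pm\sum_{i=1}^m F_i$ via Lemma \ref{lem: intersection number compare coefficients}, where the strict defect at the tail not meeting $E_Y$ rules out equality of coefficients and yields $a(F_i,X,E)>0$ for all $i$, hence plt-ness from the snc configuration. The only difference is a sign convention (your $H=\sum F_i$ versus the paper's $H=-\sum F_i$).
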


\begin{proof}
We may write
$$K_Y+E_Y-\sum_{i=1}^ma_iF_i=f^*(K_X+E)$$
where $a_i:=a(F_i,X,E)-1$. Let $H:=-\sum_{i=1}^mF_i$ and $A:=\sum_{i=1}^ma_iF_i$. Then
$$H\cdot F_1=-F_1^2=2+K_Y\cdot F_1=1+(K_Y+E_Y)\cdot F_1=1+A\cdot F_1>A\cdot F_1$$
if $m=1$,
$$H\cdot F_1=-1-F_1^2=1+K_Y\cdot F_1=(K_Y+E_Y)\cdot F_1=A\cdot F_1$$
if $m\geq 2$,
$$H\cdot F_i=-2-F_i^2=K_Y\cdot F_i=(K_Y+E_Y)\cdot F_i=A\cdot F_i$$
if $m\geq 2$ and $2\leq i\leq m-1$, and
$$H\cdot F_m=-1-F_m^2=1+K_Y\cdot F_m=1+(K_Y+E_Y)\cdot F_m=1+A\cdot F_i>A\cdot F_m$$
if $m\geq 2$. By Lemma \ref{lem: intersection number compare coefficients}, $a_i>-1$ for each $i$, hence $(X\ni x,E)$ is plt.
\end{proof}

\begin{thm}\label{thm: a type kc}
Let $(X\ni x,B)$ be a plt surface germ such that $X\ni x$ is an $A$-type singularity. Then for any divisor $E$ over $X\ni x$ such that $a(E,X,B)=\mld(X\ni x,B)$, $E$ is a Koll\'ar component of $X\ni x$.
\end{thm}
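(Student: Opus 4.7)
The plan is to show that in every case the unique extraction $g: Y' \to X$ extracting only $E$ has the property that every singularity of $Y'$ on $E$ is cyclic quotient (A-type), with $E$ appearing as a tail of each local minimal resolution graph; Lemma \ref{lem: a type is plt} will then give $(Y', E)$ plt near $E$, and Lemma \ref{lem: kc is plc place}(1) (applied with $B=0$, using that $X\ni x$ is klt) concludes that $E$ is a Koll\'ar component of $X\ni x$. Let $f: Y \to X$ be the minimal resolution with exceptional chain $F_1 \cup \cdots \cup F_m$, set $B_Y := f^{-1}_* B$, and write $K_Y + B_Y + \sum_i b_i F_i = f^*(K_X + B)$ with $b_i := 1 - a(F_i, X, B) \in [0, 1)$ by the plt assumption.

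If $E = F_j$ is one of the exceptional curves of $f$, then contracting the $F_i$ with $i \neq j$ yields $g: Y' \to X$ directly; the two side subchains contract to A-type singularities on $F_j$ in which $F_j$ is a tail, so Lemma \ref{lem: a type is plt} suffices. Otherwise $y := \Center_Y E \in \Exc(f)$ is a smooth point of $Y$, and I let $f_E: X_E \to Y$ be the sequence of smooth blow-ups from Definition-Lemma \ref{deflem: fE}. A key preliminary reduction: if $y$ lies in the interior of some $F_j$ and $(B_Y \cdot F_j)_y < 1$, then Lemma \ref{lem: interseciton<1 imply mld does not need blow up} applied to the smooth germ $(Y \ni y, B_Y + b_j F_j)$ gives
\[
a(E, X, B) = a(E, Y, B_Y + b_j F_j) \geq \mld(Y \ni y, B_Y + b_j F_j) > 1 - b_j = a(F_j, X, B) \geq \mld(X \ni x, B),
\]
contradicting that $E$ computes the mld. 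So either $(B_Y \cdot F_j)_y \geq 1$, or $y$ is a node $F_j \cap F_{j+1}$. In the former case Lemma \ref{lem: km98 4.15 generalization}(3) upgrades $B_Y \cdot F_j \geq 1$ to the conclusion that $F_j$ is a tail of the chain, i.e., $j \in \{1, m\}$.

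In the legitimate subcase where $y$ is interior to a tail $F_j$, the bound $B_Y \cdot F_j < 2$ from Lemma \ref{lem: km98 4.15 generalization}(1) lets me invoke Lemma \ref{lem: blow-up not intersection is chain}, which produces a chain $F_{j, E} \cup \Exc(f_E)$ on $X_E$ with $F_{j, E}$ a tail. In the node case, Lemma \ref{lem: km98 4.15 generalization}(2) gives $\min\{B_Y \cdot F_j, B_Y \cdot F_{j+1}\} < 1$, so Lemma \ref{lem: blow-up not intersection is chain two tails} yields a chain $F_{j, E} \cup F_{j+1, E} \cup \Exc(f_E)$ on $X_E$ with $F_{j, E}$ and $F_{j+1, E}$ as the two tails. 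In either subcase, concatenating with the original chain $F_1 - F_2 - \cdots - F_m$ through the common tail (for the interior-tail case) or through the inserted bridge linking $F_j$ and $F_{j+1}$ (for the node case) produces one long A-chain on $X_E$ containing $E$. Contracting all these exceptional curves except $E$ realizes $g: Y' \to X$, whose singular points on $E$ (at most two) each arise from contracting a sub-chain and are therefore A-type with $E$ as a tail of the local resolution; Lemma \ref{lem: a type is plt} completes the argument.

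The main obstacle is the exclusion step just above: without it, further blow-ups starting in the interior of a non-tail $F_j$ would place $F_j$ at a fork of the configuration on $X_E$ and produce a D-type singularity on $Y'$, where Lemma \ref{lem: a type is plt} does not apply. Feeding the mld-computing hypothesis through Lemma \ref{lem: interseciton<1 imply mld does not need blow up} and combining with the plt intersection bound from Lemma \ref{lem: km98 4.15 generalization}(3) is what rules this out and keeps us in the A-type regime where Lemma \ref{lem: a type is plt} is available.
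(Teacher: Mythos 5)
Your argument is correct and is essentially the paper's own proof: the same trichotomy on $\Center_Y E$ (a curve of the minimal resolution, an interior point, or a node), the same use of Lemma \ref{lem: interseciton<1 imply mld does not need blow up} together with Lemma \ref{lem: km98 4.15 generalization} to force the interior case onto a tail with $B_Y\cdot F<2$ (resp.\ to get the $<1$ bound at a node), then Lemmas \ref{lem: blow-up not intersection is chain} and \ref{lem: blow-up not intersection is chain two tails} to obtain a chain, and finally contraction plus Lemmas \ref{lem: a type is plt} and \ref{lem: kc is plc place}(1). The only detail worth making explicit (as the paper does) is that every contracted curve other than $E$ has self-intersection $\leq -2$, so the contracted subchains really are the minimal resolutions of the resulting $A$-type points, which is what Lemma \ref{lem: a type is plt} requires.
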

\begin{proof}
For any model $X'$ of $X$ such that $\Center_{X'}E$ is a divisor, we let $E_{X'}$ be the center of $E$ on $X'$. Let $h: Y\rightarrow X$ be the minimal resolution of $X$ and $F_1,\dots,F_m$ the prime exceptional divisors of $h$ with the following dual graph:
\begin{center}
    \begin{tikzpicture}
                                                    
         \draw (1.5,0) circle (0.1);
         \node [above] at (1.5,0.1) {\footnotesize$F_1$};
         \draw (1.6,0)--(2,0);
         \draw (2.1,0) circle (0.1);
         \node [above] at (2.1,0.1) {\footnotesize$F_2$};
         \draw[dashed] (2.2,0)--(3.2,0);
         \draw (3.3,0) circle (0.1);
         \node [above] at (3.3,0.1) {\footnotesize$F_{m-1}$};
         \draw (3.4,0)--(3.8,0);

         \draw (3.9,0) circle (0.1);
         
         
         \node [above] at (4.0,0.1) {\footnotesize$F_m$};
    \end{tikzpicture}
\end{center}
Let $B_Y:=f^{-1}_*B$ and $a_i:=1-a(F_i,X,B)$ for each $i$. Then $a_i\in [0,1)$ for each $i$. There are three cases:

\medskip

\noindent\textbf{Case 1}. $E$ is on $Y$. In this case, we let $W:=Y$ and $g:=h$. Then $\Dd(g)$ is a chain.  Moreover, by the construction of $g$, for any prime divisor $F\not=E_W$ in $\Exc(g)$, $F^2\leq -2$.

\medskip

\noindent\textbf{Case 2}. $E$ is not on $Y$, $\Center_YE:=y\in F_i$ for some $i$, and $\Center_YE\not\in F_j$ for any $j\not=i$. 
In this case, since $a(E,X,B)=\mld(X\ni x,B)$, $a(E,X,B)\leq 1-a_i$. By Lemma \ref{lem: interseciton<1 imply mld does not need blow up}, $B_Y\cdot F_i\geq 1$. By Lemma \ref{lem: km98 4.15 generalization}(3), $i=1$ or $m$. We let $f_E: W\rightarrow Y$ the sequence of smooth blow-ups as in Definition-Lemma \ref{deflem: fE}, and $g:=h\circ f_E$. By Lemma \ref{lem: km98 4.15 generalization}(1), $B_Y\cdot F_i<2$, hence $(B_Y\cdot F_i)_y<2$. Since
$$K_Y+B_Y+\sum a_iF_i=h^*(K_X+B),$$
by Lemma \ref{lem: blow-up not intersection is chain}, $\Dd(g)$ is a chain. Moreover, by the construction of $g$, for any prime divisor $F\not=E_W$ in $\Exc(g)$, $F^2\leq -2$.

\medskip

\noindent\textbf{Case 3}. $E$ is not on $Y$ and $\Center_YE:=y\in F_i\cap F_{i+1}$ for some $i$. In this case, we let $f_E: W\rightarrow Y$ the sequence of smooth blow-ups as in Definition-Lemma \ref{deflem: fE}, and $g:=h\circ f_E$. By Lemma \ref{lem: km98 4.15 generalization}(2), either $B_Y\cdot F_i<1$ or $B_Y\cdot F_{i+1}<1$, hence  either $(B_Y\cdot F_i)_y<1$ or $(B_Y\cdot F_{i+1})_y<1$. Since
$$K_Y+B_Y+\sum a_iF_i=h^*(K_X+B),$$
by Lemma \ref{lem: blow-up not intersection is chain two tails}, $\Dd(g)$ is a chain. Moreover, by the construction of $g$, for any prime divisor $F\not=E_W$ in $\Exc(g)$, $F^2\leq -2$.

\medskip

There exists a contraction $\phi: W\rightarrow Z$ over $X$ of $\Supp\Exc(g)\backslash E_W$. By our construction and Lemma \ref{lem: a type is plt}, in any case above,  $(Z\ni z,E_Z)$ is plt for any singular point $z$ of $Z$ in $E_Z$. Thus $(Z,E_Z)$ is plt near $E_Z$, hence $E$ is a Koll\'ar component of $X\ni x$.
\end{proof}

\subsection{\texorpdfstring{$D$}{}-type and \texorpdfstring{$E$}{}-type singularities}

\begin{lem}\label{lem: d type is lc not plt}
Let $X\ni x$ be a surface germ of $D_m$-type for some integer $m\geq 3$, $E$ a prime divisor on $X$, $f: Y\rightarrow X$ the minimal resolution of $X\ni x$, and $E_Y:=f^{-1}_*E$. Let $F_1,\dots,F_m$ be the prime exceptional divisors over $X\ni x$. Assume that $E_Y\cup_{i=1}^mF_i$ is simple normal crossing over a neighborhood of $x$, $F_{m-1}^2=F_m^2=-2$, and the dual graph of $E_Y\cup_{i=1}^mF_i$ is the following:
\begin{center}
    \begin{tikzpicture}
                                                    
         \draw[fill=black] (1.5,0) circle (0.1);
         \node [above] at (1.5,0.1) {\footnotesize$E_Y$};
         \draw (1.6,0)--(2,0);
         \draw (2.1,0) circle (0.1);
         \node [above] at (2.1,0.1) {\footnotesize$F_{1}$};
         \draw[dashed] (2.2,0)--(3.2,0);
         \draw (3.3,0) circle (0.1);
         \node [above] at (3.3,0.1) {\footnotesize$F_{m-3}$};
         \draw (3.4,0)--(3.8,0);

         \draw (3.9,0) circle (0.1);
         \draw (3.9,0.6) circle (0.1);
         \draw (3.9,-0.6) circle (0.1);
         
         \draw (3.9,0.1)--(3.9,0.5);
         \draw (3.9,-0.1)--(3.9,-0.5);
         
         \node [right] at (4,0) {\footnotesize$F_{m-2}$};
         \node [right] at (4,0.6) {\footnotesize$F_{m-1}$};
         \node [right] at (4,-0.6) {\footnotesize$F_{m}$};
    \end{tikzpicture}
\end{center}
Then $(X\ni x,E)$ is lc but not plt.
\end{lem}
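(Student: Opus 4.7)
The plan is to adapt the negative-definite intersection matrix technique used in Lemma \ref{lem: km98 4.15 generalization}: write
$$K_Y + E_Y - \sum_{i=1}^m a_i F_i = f^{*}(K_X + E), \qquad a_i := a(F_i,X,E) - 1,$$
and set $A := \sum_{i=1}^m a_i F_i$, so that $A\cdot F_j = (K_Y+E_Y)\cdot F_j$ for every $j$. I would compare $A$ with the test divisor
$$H := -\sum_{i=1}^{m-2} F_i - \tfrac{1}{2}\bigl(F_{m-1}+F_m\bigr),$$
whose coefficients encode the guess $a(F_i,X,E)=0$ for $i\le m-2$ and $a(F_i,X,E)=\tfrac12$ for $i\in\{m-1,m\}$.

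First, I would verify $H\cdot F_j = A\cdot F_j$ for every $j$ by a direct intersection count using the prescribed dual graph, the hypothesis $F_{m-1}^2=F_m^2=-2$, and the fact that $E_Y\cdot F_1=1$ while $E_Y\cdot F_i=0$ for $i\ge 2$. The case split is short: for $j=1$ (assuming $m\ge 4$, with $F_1$ adjacent to $F_2$ and $E_Y$) both sides equal $-1-F_1^2$; for $j\in\{2,\dots,m-3\}$ both equal $-2-F_j^2$; for the fork $j=m-2$ both equal $-2-F_{m-2}^2$; for $j\in\{m-1,m\}$ both equal $0$. The boundary case $m=3$, where $F_1=F_{m-2}$ is the fork, is checked separately with the same formula for $H$.

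Since $F_1,\dots,F_m$ are exceptional over a point, the intersection matrix $(F_i\cdot F_j)$ is negative definite. The identity $(H-A)\cdot F_j=0$ for every $j$ therefore forces $H=A$ (equivalently, Lemma \ref{lem: intersection number compare coefficients} applied in both directions yields $a_i=b_i$ for each $i$). Hence $a_i=-1$ for $1\le i\le m-2$ and $a_i=-\tfrac12$ for $i\in\{m-1,m\}$, i.e., $a(F_i,X,E)\in\{0,\tfrac12\}$ for every exceptional divisor $F_i$.

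Finally, because $E_Y\cup\bigcup_{i=1}^m F_i$ is snc near $x$, $f$ is a log resolution of $(X,E)$, and the lc/plt property can be read off the coefficients $1-a(F_i,X,E)\in\{1,\tfrac12\}$ appearing in $f^*(K_X+E)$. All such coefficients are $\le 1$, so $(X\ni x,E)$ is lc; but $a(F_1,X,E)=0$ with $F_1$ an exceptional divisor, so $(X\ni x,E)$ is not plt. The only real obstacle is the intersection-number bookkeeping at the fork and at the boundary case $m=3$; everything else is formal.
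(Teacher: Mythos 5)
Your proposal is correct and is essentially the paper's own argument: the paper's proof simply asserts the crepant pullback $K_Y+E_Y+\sum_{i=1}^{m-2}F_i+\tfrac{1}{2}(F_{m-1}+F_m)=f^*(K_X+E)$ "by computing intersection numbers" and then reads off lc-but-not-plt from the snc configuration, and your verification of $H\cdot F_j=A\cdot F_j$ together with negative definiteness is exactly that computation carried out explicitly. The extra care at the fork and at $m=3$ is fine but not a different method.
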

\begin{proof}
By computing intersections numbers, 
$$K_Y+E_Y+\sum_{i=1}^{m-2}F_i+\frac{1}{2}(F_{m-1}+F_m)=f^*(K_X+E).$$
Since $(Y,E_Y+\sum_{i=1}^{m-2}F_i+\frac{1}{2}(F_{m-1}+F_m))$ is log smooth over a neighborhood of $x$, $(X\ni x,E)$ is lc but not plt.
\end{proof}

\begin{lem}\label{lem: Discrepancy of the fork is minimal}
Let $0<m_1<m_2<m_3:=m$ be integers, $(X\ni x,B)$ a plt surface germ, $f:Y\to X$ the minimal resolution of $X\ni x$ with prime exceptional divisors $F_0,\dots,F_{m}$ with the following dual graph $\Dd(f)$:
\begin{center}
    \begin{tikzpicture}
                                                    
         \draw (2.1,0) circle (0.1);
         \node [below] at (2.1,-0.1) {\footnotesize$F_{m_1}$};
         \draw[dashed] (2.2,0)--(3.2,0);
         \draw (3.3,0) circle (0.1);
         \node [below] at (3.3,-0.1) {\footnotesize$F_1$};
         \draw (3.4,0)--(3.8,0);

         \draw (3.9,+1.5) circle (0.1);
         \draw (3.9,0) circle (0.1);
         \draw (3.9,0.6) circle (0.1);
         \draw (4.5,0) circle (0.1);
         \node [below] at (4.7,-0.1) {\footnotesize$F_{m_2+1}$};
         \draw (5.7,0) circle (0.1);
         \node [below] at (5.7,-0.1) {\footnotesize$F_m$};

         \draw [dashed](3.9,+0.7)--(3.9,+1.4);
         \draw (3.9,0.1)--(3.9,0.5);
         \draw (4.0,0)--(4.4,0);
         \draw [dashed](4.6,0)--(5.6,0);

         \node [right] at (4,1.5) {\footnotesize$F_{m_2}$};
         \node [below] at (3.9,-0.1) {\footnotesize$F_0$};
         \node [right] at (4,0.6) {\footnotesize$F_{m_1+1}$};
    \end{tikzpicture}
\end{center}
Let $a_i:=a(F_i,X,B)-1$ for each $i$. Then
\begin{enumerate}
    \item $a(F_0,X,B)\leq a(F_i,X,B)$ for any $i\in\{1,2,\dots,m_1\}$, and
    \item if $a(F_0,X,B)=a(F_l,X,B)$ for some $l\in\{1,2,\dots,m_1\}$, then
    \begin{enumerate}
    \item $a_i=a_0$ for every $i\in\{0,1,\dots,l\}$, 
    \item $a_{m_1+1}=a_{m_2+1}=\frac{1}{2}a_0$,
    \item $F_i^2=-2$ for any $i\in\{0,1,\dots,l-1,m_1+1,m_2+1\}$, and
    \item either $m=m_2+1=m_1+2$, or $B=0$ and $X\ni x$ Du Val.
    \end{enumerate}
\end{enumerate}
\end{lem}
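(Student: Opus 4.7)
Set up the discrepancy divisor $A := K_Y+B_Y-f^*(K_X+B)=\sum_{i=0}^m a_iF_i$ with $a_i=a(F_i,X,B)-1\in(-1,0]$ by the plt assumption on $(X\ni x,B)$. Adjunction then gives
\[
A\cdot F_j=(K_Y+B_Y)\cdot F_j=e_j-2+B_Y\cdot F_j\ge 0,
\]
where $e_j:=-F_j^2\ge 2$. Expanding this identity at an interior node $F_j$ of branch $1$ ($1\le j\le m_1-1$) yields the convexity estimate
\[
(a_{j+1}-a_j)-(a_j-a_{j-1})=(1+a_j)(e_j-2)+B_Y\cdot F_j\ge 0,
\]
so the sequence $(a_0,a_1,\ldots,a_{m_1})$ has non-decreasing consecutive differences; in particular, Part (1) reduces to the single inequality $a_0\le a_1$.

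For the key inequality I would apply Lemma \ref{lem: intersection number compare coefficients} to the test divisor
\[
H:=a_0\sum_{i=0}^{m_1}F_i\;+\;\tfrac{a_0}{2}(F_{m_1+1}+F_{m_2+1})\;+\;\sum_{i\text{ elsewhere}}a_iF_i,
\]
whose coefficients on branch $1$ and at the two entry nodes of branches $2,3$ are calibrated to match the equality profile predicted by Part (2)(a)--(b). A direct computation shows
\[
H\cdot F_j-A\cdot F_j=-(1+a_0)(e_j-2)-B_Y\cdot F_j\le 0
\]
at every interior branch-$1$ node and at the fork $F_0$, and equals $e_j(a_j-a_0/2)$ at $F_{m_1+1}$ and $F_{m_2+1}$, which becomes $\le 0$ once one invokes the symmetric version of Part (1) applied to branches $2$ and $3$. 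The delicate case is the tail $F_{m_1}$ (and by symmetry the tails of the other two branches), where the inequality becomes $(1+a_0)(e_{m_1}-1)+B_Y\cdot F_{m_1}\ge 1$ and is established by combining $e_{m_1}\ge 2$ with the plt bound $a_0>-1$; when this still does not suffice one replaces the tail coefficient $a_0$ of $H$ by $a_{m_1}$ and iterates inward along the branch. Once $H\cdot F_j\le A\cdot F_j$ has been verified for every $j$, Lemma \ref{lem: intersection number compare coefficients} forces either $H=A$ or $H<A$ strictly coefficient-wise. The strict option would give $h_0=a_0<a_0$, a contradiction, so $H=A$; reading off the coefficients yields both $a_l=a_0$ for $l\in\{0,\ldots,m_1\}$ and $a_{m_1+1}=a_{m_2+1}=a_0/2$. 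In situations where one of the calibrated inequalities is in fact strict, the argument only yields the inequality $a_0<a_1$ in Part (1), while in the borderline situations one obtains the full equality case of Part (2).

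For Part (2), the equality $a_0=a_l$ forces every non-negative contribution appearing in the computation of $H\cdot F_j-A\cdot F_j$ to vanish. At interior branch-$1$ nodes $F_j$ with $1\le j\le l-1$ this forces $e_j=2$ and $B_Y\cdot F_j=0$, giving (a) and the branch-$1$ part of (c). Equality at $F_0$ forces $B_Y\cdot F_0=0$, and equality at $F_{m_1+1}, F_{m_2+1}$ forces $a_{m_1+1}=a_{m_2+1}=a_0/2$ and $e_{m_1+1}=e_{m_2+1}=2$, yielding (b) and the rest of (c). For (d), I would run the same equality-convexity argument along branches $2$ and $3$ past $F_{m_1+1}$ and $F_{m_2+1}$; this produces a dichotomy: either the branches terminate immediately, so $m=m_2+1=m_1+2$, or the equalities propagate all the way down the branches, forcing every $e_i=2$, every $B_Y\cdot F_i=0$, and every $a_i=0$, in which case $(X\ni x,B)$ is Du Val with $B=0$.

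The principal obstacle is verifying the test-divisor inequality $H\cdot F\le A\cdot F$ uniformly at the three branch tails, where both the self-intersection bound $e\ge 2$ and the plt hypothesis $a_i>-1$ must be used quantitatively; the degenerate cases of a branch of length $1$ (in which the tail coincides with the branch's only node) must be treated separately, and some care is required to upgrade the equality conclusion $H=A$ to the bare inequality statement of Part (1) in cases where some of the calibrated inequalities are strict.
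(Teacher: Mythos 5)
Your reduction of (1) to $a_0\le a_1$ via the convexity identity is fine, and your computations at the fork and at interior branch-$1$ vertices are correct, but the test-divisor comparison itself does not go through, and the two places your sketch leaves open are genuine failures rather than technicalities. At the entry vertices you need $a_{m_i+1}\le a_0/2$; the symmetric version of Part (1) only gives $a_0\le a_{m_i+1}$, which (all $a_j$ being nonpositive) is the wrong direction -- the inequality you need is essentially conclusion (2.b), so invoking it is circular. Worse, since your $H$ and $A$ are supported on the whole exceptional locus, Lemma \ref{lem: intersection number compare coefficients} must also be checked at the vertex just below each entry node, where $(H-A)\cdot F_{m_i+2}=a_0/2-a_{m_i+1}$, i.e.\ you need the \emph{reverse} inequality there; the hypotheses can only hold if $a_{m_i+1}=a_0/2$ already. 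At the tail, $(1+a_0)(e_{m_1}-1)+B_Y\cdot F_{m_1}\ge 1$ is false precisely in the typical case $e_{m_1}=2$, $B_Y\cdot F_{m_1}=0$, $a_0<0$, and it must fail in general: if all your inequalities held, the lemma would force $a_i=a_0$ along the whole of branch $1$ and $a_{m_i+1}=a_0/2$ \emph{unconditionally}, which is false. For instance, take $B=0$ and the star-shaped graph with fork $F_0$ and branches $\{F_1\}$, $\{F_2,F_3\}$, $\{F_4\}$, all $(-2)$-curves except $F_4^2=-3$; then $(a_0,a_1,a_2,a_3,a_4)=(-\tfrac23,-\tfrac13,-\tfrac49,-\tfrac29,-\tfrac59)$, so the coefficients vary along the length-two branch, your tail inequality fails there, and with branch $1=\{F_4\}$ the inequality at $F_3$ fails as well. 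The fallback ``replace the tail coefficient by $a_{m_1}$ and iterate inward'' is exactly the missing idea and is not an argument as written.

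The paper's proof repairs both defects by truncating. Assuming some $k\in\{1,\dots,m_1\}$ with $a_k=\min_{0\le i\le m_1}a_i\le a_0$, it compares $H:=a_k\bigl(\sum_{i=0}^kF_i+\tfrac12F_{m_1+1}+\tfrac12F_{m_2+1}\bigr)$ with the \emph{partial} divisor $A:=\sum_{i=0}^ka_iF_i+a_{m_1+1}F_{m_1+1}+a_{m_2+1}F_{m_2+1}$, applying Lemma \ref{lem: intersection number compare coefficients} only on that connected sub-configuration. Discarding the remaining nonpositive coefficients gives $(K_Y+B_Y)\cdot F_j\le A\cdot F_j$, the inequality at the cut vertex $F_k$ uses only $a_k\le a_{k-1}$ (no tail estimate is ever needed), and the entry-node inequalities follow from $-\tfrac{a_k}{2}K_Y\cdot F_{m_i+1}\le K_Y\cdot F_{m_i+1}$ with no a priori knowledge of $a_{m_i+1}$. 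The coefficientwise dichotomy then contradicts the minimality of $a_k$ unless all coefficients agree, which yields (1) and, tracing the equalities, (2.a)--(2.c); for (2.d) equality at $F_{m_i+1}$ forces either $m_{i+1}=m_i+1$ or $a_{m_i+2}=0$, and the latter forces $B=0$ and, using that $K_Y$ is nef over $X$, the Du Val case. You would need to rebuild your argument along these lines; as proposed, the key inequalities are unproved and in typical cases false.
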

\begin{proof}
Let $B_Y:=f^{-1}_*B$. Then 
$$K_Y+B_Y-\sum_{i=1}^ma_iF_i=f^*(K_X+B).$$
Since $(X\ni x,B)$ is plt and $f$ is the minimal resolution of $X\ni x$, $-1<a_i\leq 0$ for each $i$.

If $a_0<a_i$ for any $i\in\{1,2,\dots,m_1\}$ there is nothing to prove. Otherwise, there exists $k\in\{1,2,\dots,m_1\}$, such that $a_k=\min\{a_i\mid 0\leq i\leq m_1\}\leq a_0$. We define 
$$A:=\sum_{i=0}^{k}a_iF_i+a_{m_1+1}F_{m_1+1}+a_{m_2+1}F_{m_2+1}, \text{ and } H:=a_k(\sum_{i=0}^kF_i+\frac{1}{2}F_{m_1+1}+\frac{1}{2}F_{m_2+1}).$$
Then
$$H\cdot F_i=a_k(F_i^2+2)=-a_kK_Y\cdot F_i\leq K_Y\cdot F_i\leq(K_Y+B_Y)\cdot F_i=A\cdot F_i,$$
when $0\leq i<k$ and if the equality holds then $K_Y\cdot F_i=0$,
$$H\cdot F_{k}=a_k(F_k^2+1)\leq a_kF_k^2+a_{k-1}=A\cdot F_k$$
and if the equality holds then $a_k=a_{k-1}$, and  
$$H\cdot F_{m_i+1}=a_k(1+\frac{1}{2}F_{m_i+1}^2)=-\frac{a_k}{2}K_Y\cdot F_{m_i+1}\leq K_Y\cdot F_{m_i+1}\leq (K_Y+B_Y)\cdot F_{m_i+1}\leq A\cdot F_{m_i+1}$$
for $i\in\{1,2\}$, and if the equality holds, then
\begin{itemize}
    \item $K_Y\cdot F_{m_i+1}=0$, and
    \item either $m_{i+1}=m_i+1$, or $m_{i+1}\geq m_i+2$ and $a_{m_i+2}=0$.
\end{itemize}
Thus $H\cdot F_i\leq A\cdot F_i$ for any $i\in\{0,1,\dots,k,m_1+1,m_2+1\}$. By Lemma \ref{lem: intersection number compare coefficients}, $a_i\leq a_k$ for any $i\in\{0,1,\dots,k\}$ and $a_{m_1+1}=a_{m_2+1}=\frac{1}{2}a_k$. Since $a_k=\min\{a_i\mid 0\leq i\leq m_1\}\leq a_0$, $a_i=a_k=\min\{a_i\mid 0\leq i\leq m_1\}$ for any $i\in\{0,1,\dots,k\}$. Thus for any $l\in\{1,2,\dots,m_1\}$ such that $a(F_0,X,B)=a(F_l,X,B)$, we may pick $k=l$, which shows (1) and (2.a). Moreover, we have that $H\cdot F_i=A\cdot F_i$ for any $i\in\{0,1,\dots,l,m_1+1,m_2+1\}$, which implies that
\begin{itemize}
\item $a_{m_1+1}=a_{m_2+1}=\frac{1}{2}a_l$, hence (2.b).
\item $F_i^2=K_Y\cdot F_i=-2$ for every $i\in\{0,1,\dots,l-1,m_1+1,m_2+1\}$, hence (2.c), and
\item either $m=m_2+1=m_1+2$, or there exists $i\in\{1,2\}$ such that $m_{i+1}\geq m_i+2$ and $a_{m_i+2}=0$.
\end{itemize}
If $m=m_2+1=m_1+2$ then we get (2.d) and the proof is completed. Otherwise, there exists $i\in\{1,2\}$ such that $m_{i+1}\geq m_i+2$ and $a_{m_i+2}=0$. Thus
$$1=a(F_{m_i+2},X,B)\leq a(F_{m_i+2},X,0)\leq 1,$$
which implies that $a(F_i,X,B)=a(F_i,X,0)$, hence $B=0$. Moreover, since $f$ is the minimal resolution of $X\ni x$, $\sum_{i=1}^ma_iF_i\sim_XK_Y$ is nef over $X$. By Lemma \ref{lem: intersection number compare coefficients}, $a_i=0$ for every $i$, and $X\ni x$ is Du Val.
\end{proof}

\begin{lem}\label{lem: de type mld reached at minimal resolution}
Let $(X\ni x,B)$ be a plt surface germ such that $X\ni x$ is a $D_m$-type singularity for some integer $m\geq 4$, or an $E_m$-type singularity for some integer $m\in\{6,7,8\}$. Let $f: Y\rightarrow X$ be the minimal resolution of $X\ni x$, and $E$ a divisor over $X\ni x$ such that $a(E,X,B)=\mld(X\ni x,B)$. Then $E\subset\Exc(f)$.
\end{lem}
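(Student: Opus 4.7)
The plan is to argue by contradiction. Suppose $E$ computes the mld of $(X\ni x,B)$ but $E\notin\Exc(f)$, so $y:=\Center_Y E$ is a closed point of $\Exc(f)$. Setting $B_Y:=f^{-1}_*B$ and $a_k:=1-a(F_k,X,B)\in[0,1)$ (by plt), the log pullback $K_Y+B_Y+\sum_k a_kF_k=f^*(K_X+B)$ gives $a(G,X,B)=a(G,Y,B_Y+\sum_k a_kF_k)$ for every divisor $G$ over $Y$. Applying Lemma~\ref{lem: Discrepancy of the fork is minimal} to each of the three branches meeting at the fork $F_0$ yields $a_k\leq a_0$ for every $k$, so $\mld(X\ni x,B)\leq 1-a_0$. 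Since $X\ni x$ is not of $A$-type, Lemma~\ref{lem: km98 4.15 generalization} ensures $B_Y\cdot F_k<1$ for every exceptional $F_k$. My goal is to derive $a(E,X,B)>1-a_0$, contradicting that $E$ computes the mld.

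The arithmetic input is the identity $B_Y\cdot F_k=2+F_k^2(1-a_k)-\sum_{\ell\text{ adjacent to }k}a_\ell$, obtained from $(K_Y+B_Y+\sum_\ell a_\ell F_\ell)\cdot F_k=0$. Combined with $F_k^2\leq -2$ (minimal resolution), this yields $B_Y\cdot F_k\leq 2a_k-\sum_{\ell\text{ adj}}a_\ell$, from which I extract two consequences that drive the proof: (i) for every fork-adjacent $F_k$ (i.e.\ $k\in\{1,m_1+1,m_2+1\}$), using $B_Y\cdot F_k\geq 0$ and $a_\ell\geq 0$ for the remaining neighbors, one obtains $a_k\geq a_0/2$; (ii) for an $F_k$ in a branch whose neighbor $F_{k-1}$ is closer to the fork, the monotonicity $a_{k-1}\geq a_k$ furnished by Lemma~\ref{lem: Discrepancy of the fork is minimal} gives $B_Y\cdot F_k\leq a_k-a_{k+1}$.

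I then split on the location of $y$. If $y$ lies on a unique $F_i$, no other $F_\ell$ passes through $y$ in the $D,E$-graph, so $((B_Y+\sum_{\ell\neq i}a_\ell F_\ell)\cdot F_i)_y=(B_Y\cdot F_i)_y\leq B_Y\cdot F_i<1$ and Lemma~\ref{lem: interseciton<1 imply mld does not need blow up} with $C=F_i$, $a=a_i$ gives $\mld(Y\ni y,B_Y+\sum_k a_kF_k)>1-a_i\geq 1-a_0$. If instead $y=F_i\cap F_j$ for two adjacent $F_i,F_j$ (the $D,E$-graph has no triple intersections), then either (B1) one of $F_i,F_j$ is the fork, say $F_j=F_0$, or (B2) neither equals $F_0$, in which case $F_i,F_j$ are consecutive in one branch. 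In (B1), among the three fork-adjacents $\{F_1,F_{m_1+1},F_{m_2+1}\}$ only $F_i$ passes through $y$, so
\[
\bigl((B_Y+\textstyle\sum_{\ell\neq 0}a_\ell F_\ell)\cdot F_0\bigr)_y\leq B_Y\cdot F_0+a_i\leq 2a_0-(\text{the other two fork-adjacent }a_\ell)\leq 2a_0-a_0=a_0<1,
\]
by consequence (i), and Lemma~\ref{lem: interseciton<1 imply mld does not need blow up} with $C=F_0$ gives $\mld>1-a_0$. In (B2), taking $F_i$ closer to the fork, consequence (ii) yields $((B_Y+a_jF_j+\ldots)\cdot F_i)_y\leq (a_i-a_j)+a_j=a_i<1$, so Lemma~\ref{lem: interseciton<1 imply mld does not need blow up} with $C=F_i$ gives $\mld>1-a_i\geq 1-a_0$. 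In every case $a(E,X,B)\geq\mld(Y\ni y,B_Y+\sum_k a_kF_k)>1-a_0$, the desired contradiction.

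The main obstacle is subcase (B1): without extra information $B_Y\cdot F_0+a_i$ is only bounded by $2a_0-\sum a_\ell$ summed over the two other fork-adjacent $\ell$, which is not obviously strictly less than $1$ when $a_0$ is close to $1$ and the naive bound $a_\ell\geq 0$ is used. The refined estimate $a_\ell\geq a_0/2$ at each fork-adjacent $F_\ell$, which is particular to the three-branch shape of the $D$ and $E$ dual graphs, is precisely what pushes $B_Y\cdot F_0+a_i$ strictly below $1$ and lets the fork $F_0$ itself serve as the test curve in Lemma~\ref{lem: interseciton<1 imply mld does not need blow up}.
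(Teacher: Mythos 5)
Your overall architecture is sound and genuinely different from the paper's in the key case: after ruling out centers lying on a single exceptional curve (which you do exactly as the paper does, via Lemma~\ref{lem: km98 4.15 generalization}(3) and Lemma~\ref{lem: interseciton<1 imply mld does not need blow up}), the paper handles a center $y=F_i\cap F_k$ by extracting the curve $F_k$ farther from the fork and applying Lemma~\ref{lem: km98 4.15 generalization}(3) a second time, to the auxiliary plt germ $(W\ni w, g^{-1}_*B+a_k\bar F_k)$ whose minimal resolution is $Y\to W$; this gives $(B_Y+a_kF_k)\cdot F_i<1$ in one stroke, with no numerics. You instead run a direct computation with the adjunction identity $B_Y\cdot F_k=2+F_k^2(1-a_k)-\sum_{\ell\,\mathrm{adj}}a_\ell$, and your consequences (i) ($a_\ell\ge a_0/2$ at fork-adjacent curves) and the fork case (B1) are correct.

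The gap is in case (B2): the within-branch monotonicity $a_{k-1}\ge a_k$ (with $F_{k-1}$ the neighbor closer to the fork) is \emph{not} furnished by Lemma~\ref{lem: Discrepancy of the fork is minimal}, which only compares each branch curve with the fork, i.e.\ gives $a_i\le a_0$. Your argument genuinely needs the stronger statement whenever $F_i$ sits at distance $\ge 2$ from the fork (so whenever a branch has length $\ge 3$, which happens for $D_m$ with $m\ge 6$ and for $E_7,E_8$): without it your bound on $((B_Y+\sum_{\ell\neq i}a_\ell F_\ell)\cdot F_i)_y$ is only $2a_i-a_{i-1}$, which need not be $<1$. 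The statement is true, and you can repair the proof from your own identity: at an interior branch curve one gets $a_{k-1}+a_{k+1}-2a_k=(e_k-2)(a_k-1)-t_k\le 0$ where $e_k=-F_k^2\ge 2$ and $t_k=B_Y\cdot F_k\ge 0$, so the successive differences $a_k-a_{k-1}$ are non-increasing along the branch; if some difference were positive, all earlier ones would be, forcing $a_k>a_0$ and contradicting Lemma~\ref{lem: Discrepancy of the fork is minimal}(1). Alternatively, you can bypass the numerics entirely in case (B2) by the paper's device: extract the farther of the two curves through $y$ and apply Lemma~\ref{lem: km98 4.15 generalization}(3) to the resulting plt germ. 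As written, though, the citation does not support the step, so this is a genuine (if repairable) gap.
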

\begin{proof}
Let $F_1,\dots,F_m$ be the prime exceptional divisors of $f$, $a_i:=1-a(F_i,X,B)$ for each $i$, and $B_Y:=f^{-1}_*B$. Then
$$K_Y+B_Y+\sum_{j=1}^ma_jF_j=f^*(K_X+B).$$
Suppose that $E\not\subset\Exc(f)$. Let $y:=\Center_YE$.

\begin{claim}\label{claim: de type mld center only intersection}
$y=F_i\cap F_k$ for some $i\not=k$.
\end{claim}
\begin{proof}
Then there exists an integer $i\in\{1,2,\dots,m\}$ such that $y\in F_i$. Thus
\begin{align*}
    \mld(Y\ni y, B_Y+\sum_{j=1}^ma_jF_j)&\leq a(E,Y, B_Y+\sum_{j=1}^ma_jF_j)\\
    &=a(E,X,B)=\mld(X\ni x,B)\leq 1-a_i.
\end{align*}
By Lemma \ref{lem: interseciton<1 imply mld does not need blow up}, 
$$(B+\sum_{j\not=i}a_jF_j)\cdot F_i\geq((B+\sum_{j\not=i}a_jF_j)\cdot F_i)_y\geq 1.$$
By Lemma \ref{lem: km98 4.15 generalization}(3), $B\cdot F_i<1$, which implies that there exists $k\not=i$ such that $y\in F_k$.  In particular, $y=F_i\cap F_k$.
\end{proof}
\noindent\textit{Proof of Lemma \ref{lem: de type mld reached at minimal resolution}} continued. By Claim \ref{claim: de type mld center only intersection}, $y=F_i\cap F_k$ for some $i\not=k$. Possibly switching $i$ and $k$, we may assume that $F_i$ is closer to the fork of $\Dd(f)$ than $F_k$. Since $(X\ni x,B)$ is plt and $f$ is the minimal resolution of $X\ni x$, $0\leq a_k<1$. Thus there exists an extraction $g: W\rightarrow X$ of $F_k$ with induced morphism $h: Y\rightarrow W$. Let $\bar F_k$ be the center of $F_k$ on $W$ and $w:=\Center_WE$. Then $h$ is the minimal resolution of $W\ni w$. Moreover, if $F_i$ is not the fork of $\Dd(f)$, then $W\ni w$ is not an $A$-type singularity, and if $F_i$ is the fork of $\Dd(f)$, then $W\ni w$ is an $A$-type singularity but $F_i$ is not a tail of $\Dd(h)$. Since $(X\ni x,B)$ is plt, $(W\ni w,g^{-1}_*B+a_kF_k)$ is plt. By Lemma \ref{lem: km98 4.15 generalization}(3), 
$$((B+\sum_{j\not=i}a_jF_j)\cdot F_i)_y=((B+a_kF_k)\cdot F_i)_y\leq (B+a_kF_k)\cdot F_i<1.$$
By Lemma \ref{lem: interseciton<1 imply mld does not need blow up}, 
\begin{align*}
    \mld(X\ni x,B)&=a(E,X,B)=a(E,Y, B_Y+\sum_{j=1}^ma_jF_j)\\
    &\geq\mld(Y\ni y, B_Y+\sum_{j=1}^ma_jF_j)>1-a_i\geq\mld(X\ni x,B),
\end{align*}
a contradiction.
\end{proof}

\begin{thm}\label{thm: de type kc}
Let $(X\ni x,B)$ be a plt surface germ such that $X\ni x$ is a $D_m$-type singularity for some integer $m\geq 4$ or an $E_m$-type singularity for some integer $m\in\{6,7,8\}$. Let $f: Y\rightarrow X$ be the minimal resolution of $X\ni x$. Then there exists a unique divisor $E$ over $X\ni x$, such that $a(E,X,B)=\mld(X\ni x,B)$, and $E$ is a Koll\'ar component of $X\ni x$. Moreover, $E$ is the unique fork of $\Dd(f)$.
\end{thm}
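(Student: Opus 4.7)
The plan is to establish three claims in order: (a) the unique fork $F_0$ of $\Dd(f)$ lies in $\mathcal{C}$; (b) $F_0$ is a Koll\'ar component of $X\ni x$; and (c) no other element of $\mathcal{C}$ is a Koll\'ar component of $X\ni x$. By Lemma \ref{lem: de type mld reached at minimal resolution}, $\mathcal{C}\subseteq\{F_1,\dots,F_m\}=\Exc(f)$, so everything can be analyzed on the minimal resolution. For (a), I apply Lemma \ref{lem: Discrepancy of the fork is minimal}(1) in turn to each of the three branches emanating from $F_0$, which yields $a(F_0,X,B)\leq a(F_i,X,B)$ for every $i$, and hence $F_0\in\mathcal{C}$.

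For (b), let $h:Y\to W$ be the contraction of $\cup_{i\neq 0}F_i$ (which exists since the sub-matrix of a negative-definite intersection matrix is itself negative-definite) and let $g:W\to X$ be the induced extraction of $F_0$; note $-F_0$ is $g$-ample, as $F_0$ is the unique $g$-exceptional curve and has negative self-intersection on $W$. By Lemma \ref{lem: kc is plc place}(1) it suffices to show $(W,g^{-1}_*B+F_0)$ is plt near $F_0$. Writing $h^*F_0=F_0+\sum_{i\neq 0}b_iF_i$, with $b_i>0$ determined by $h^*F_0\cdot F_j=0$ for $j\neq 0$, and setting $a_i:=1-a(F_i,X,B)$, a direct comparison of $h^*(K_W+g^{-1}_*B+F_0)$ with $f^*(K_X+B)=h^*g^*(K_X+B)=h^*(K_W+g^{-1}_*B+a_0F_0)$ gives the coefficient of $F_i$ in the pullback as $c_i:=a_i+(1-a_0)b_i$. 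Since $a_i\leq a_0$ by (a), $a_0<1$ from plt-ness of $(X,B)$, and $b_i<1$ by the standard analysis of contractions of cyclic quotient chains, we obtain $c_i\leq a_0(1-b_i)+b_i<1$, establishing plt-ness.

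For (c), suppose $F_j\in\mathcal{C}$ with $j\neq 0$ is a Koll\'ar component. Since $a(F_j,X,B)=a(F_0,X,B)$, Lemma \ref{lem: Discrepancy of the fork is minimal}(2) supplies strong structural information, including $a_i=a_0$ along the entire sub-branch from $F_0$ to $F_j$, and by (2.d) either $m=m_2+1=m_1+2$ (a $D_n$-type with two branches of length $1$) or $B=0$ with $X\ni x$ Du Val. Let $g':W'\to X$ be the extraction of $F_j$ obtained by contracting $\cup_{i\neq j}F_i$. A short case analysis on the position of $F_j$ in its branch (tail, interior, or end adjacent to the fork) shows that on $W'$ the divisor $F_j$ is attached either at a non-tail vertex of an $A$-type chain or at the tail of the long chain of a $D$-type (or $E$-type) configuration; in each case, Lemma \ref{lem: d type is lc not plt} (together with the direct $A$-middle intersection-number computation and an $E$-type analogue proved by the same technique) forces $(W',F_j)$ to be lc but not plt near $F_j$. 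Since adding the effective boundary $g'^{-1}_*B$ only decreases discrepancies, $(W',g'^{-1}_*B+F_j)$ is also not plt, contradicting the Koll\'ar-component assumption.

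The main obstacle is the case analysis in (c): each possible position of $F_j$ on its branch produces a different extended dual graph on $W'$, and each sub-case requires a tailored non-plt certificate, either drawn directly from or obtained by extending Lemma \ref{lem: d type is lc not plt}. The plt-ness argument in (b) also depends on the subsidiary bound $b_i<1$, which is classical but deserves explicit verification via the Hirzebruch--Jung continued-fraction description of cyclic-quotient contractions.
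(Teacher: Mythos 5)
Your step (b) has a genuine gap as written. You reduce to showing that $(W,g^{-1}_*B+F_0)$ is plt near $F_0$ and then conclude plt-ness from the bound $c_i=a_i+(1-a_0)b_i<1$ on the curves $F_i$ of the minimal resolution. When $B\neq 0$ this inference is invalid: $h\colon Y\to W$ is only the minimal resolution, not a log resolution of the pair $(W,g^{-1}_*B+F_0)$, so bounding the coefficients of the $F_i$ says nothing about divisors whose centers are points of $F_0$ where $B_Y$ is singular or tangent to $F_0$. In fact the statement you are trying to prove can fail: if $B=bC$ with $b$ small and the strict transform of $C$ tangent to $F_0$ at a general point to order $k>1/b$, then $(X\ni x,B)$ is plt but $(W,g^{-1}_*B+F_0)$ is not even lc near $F_0$, so $F_0$ need not be a Koll\'ar component of $(X\ni x,B)$. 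The theorem, however, only asserts that $F_0$ is a Koll\'ar component of $X\ni x$, i.e.\ of the germ with boundary zero. Your computation does give this after the obvious repair: apply it with $B=0$ (or note that deleting the effective divisor $B$ only decreases the pulled-back coefficients, so the coefficients for $K_W+F_0$ are $\leq c_i<1$), use Lemma \ref{lem: Discrepancy of the fork is minimal}(1) with boundary $0$ for $a^0_i\leq a^0_0$, and then conclude plt-ness because $Y$ \emph{is} a log resolution of $(W,F_0)$, the support $F_0\cup\bigcup_{i\neq 0}F_i$ being snc. So the approach is salvageable, but the sentence ``establishing plt-ness'' is not justified and its target statement is false in general.

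With that repair, your argument matches the paper's in outline: (a) is exactly Lemma \ref{lem: de type mld reached at minimal resolution} plus Lemma \ref{lem: Discrepancy of the fork is minimal}(1); for (b) the paper avoids the $b_i$-computation altogether by observing that on the extraction of the fork the three singular points are $A$-type with $\bar F_0$ attached at a tail of each chain, so Lemma \ref{lem: a type is plt} gives plt-ness directly (your Hirzebruch--Jung computation is a legitimate alternative, just heavier); and (c) is the paper's route via Lemma \ref{lem: d type is lc not plt} applied to the extraction of a non-fork curve, except that the paper asserts non-plt-ness for \emph{every} non-fork exceptional curve rather than only those computing the mld, so it does not need the structural constraints from Lemma \ref{lem: Discrepancy of the fork is minimal}(2). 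Both you and the paper still owe the small residual certificates you flag (attachment at an interior vertex of an $A$-chain, and the $E$-type configurations); these are true and provable by the same intersection-number comparison as Lemma \ref{lem: intersection number compare coefficients}, so listing them as deferred computations is acceptable, but they should eventually be written out.
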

\begin{proof}
By Lemma \ref{lem: de type mld reached at minimal resolution}, we may only consider divisors in $\Exc(f)$. Let $F_1,\dots,F_m$ be the prime exceptional divisors of $f$ such that $F_1$ is the unique fork. Since $a(F_i,X,B)\leq a(F_i,X,0)\leq 1$ for each $i$, there exists an extraction $g: Y_i\rightarrow X$ of $F_i$ for each $i$, and we let $\bar F_i$ be the strict transform of $F_i$ on $Y_i$. By Lemmas \ref{lem: a type is plt} and \ref{lem: d type is lc not plt}, $(Y_i,\bar F_i)$ is not plt near $\bar F_i$ for any $i\in\{2,3,\dots,m\}$ and $(Y_1,\bar F_1)$ is plt near $F_1$. By Lemma \ref{lem: Discrepancy of the fork is minimal}, $a(F_1,X,B)=\mld(X\ni x,B)$. So $E=F_1$ is the unique divisor we want.
\end{proof}

\begin{thm}\label{thm: de type potential lc place not du val}
Let $(X\ni x,B)$ be a plt surface germ such that $X\ni x$ is a $D_m$-type singularity for some integer $m\geq 4$ or an $E_m$-type singularity for some integer $m\in\{6,7,8\}$. Assume that either $B\not=0$ or $X\ni x$ is not Du Val. Let $E$ be a divisor over $X\ni x$ such that $a(E,X,B)=\mld(X\ni x,B)$, then $E$ is a potential lc place of $X\ni x$. 
\end{thm}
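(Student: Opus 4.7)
The plan is to use Lemma \ref{lem: de type mld reached at minimal resolution} to restrict $E$ to the prime exceptional divisors of the minimal resolution $f\colon Y\to X$, and then extract each divisor computing the mld and verify log canonicity on the extracted model. Label the exceptional divisors $F_0, F_1, \ldots$ with $F_0$ the unique fork of $\Dd(f)$. By Theorem \ref{thm: de type kc}, $F_0$ is a Koll\'ar component of $X\ni x$, hence by Lemma \ref{lem: kc is plc place} a potential lc place of $X\ni x$, so I may assume $E = F_i$ with $i\neq 0$.

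Next I apply Lemma \ref{lem: Discrepancy of the fork is minimal} to every labeling of the three branches of $\Dd(f)$, and check that the numerical alternative $m = m_2+1 = m_1+2$ in part (2.d) fails for all three branches of $E_m$ ($m\in\{6,7,8\}$) and for the two length-$1$ branches of $D_m$ with $m\geq 5$. The remaining Du Val alternative is ruled out by hypothesis. So the $E$-type case is already finished (only $F_0$ can compute the mld), and in the $D_m$-case $F_i$ must lie in the long branch $F_1-F_2-\cdots-F_{m-3}$, with $i\in\{1,\dots,l\}$ for some $l$ determined by Lemma \ref{lem: Discrepancy of the fork is minimal}. Part (2.c) of that lemma further yields that $F_0, F_1, \ldots, F_{l-1}$ and the two singleton-branch divisors $F', F''$ are all $(-2)$-curves.

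Fix such $i\in\{1,\ldots,l\}$. Since $F_i$ sits on the minimal resolution of a klt germ we have $a(F_i,X,0)\leq 1$, so surface MMP produces an extraction $g_i\colon Y_i\to X$ with unique exceptional divisor $\bar F_i$; let $h_i\colon Y\to Y_i$ be the induced morphism. The exceptional set of $h_i$ decomposes into two connected pieces meeting $\bar F_i$: Piece 1 consisting of $\{F_0, \ldots, F_{i-1}, F', F''\}$, contracted to a point $p_1$, and Piece 2 consisting of the chain $\{F_{i+1}, \ldots, F_{m-3}\}$ (empty when $i=m-3$), contracted to a point $p_2$. By the $(-2)$-curve information above, Piece 1 forms a $D$-type dual graph with $F_0$ as fork, and $\bar F_i$ attaches as a tail extending the long branch via its former intersection with $F_{i-1}$; so Lemma \ref{lem: d type is lc not plt} applied to $(Y_i\ni p_1, \bar F_i)$ gives it is lc. Piece 2 is a chain with all self-intersections $\leq -2$, so $Y_i\ni p_2$ is of $A$-type with $\bar F_i$ attaching as a tail; Lemma \ref{lem: a type is plt} yields $(Y_i\ni p_2, \bar F_i)$ plt. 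Hence $(Y_i,\bar F_i)$ is lc near $\bar F_i$, and Lemma \ref{lem: kc is plc place}(2) applied with boundary $0$ concludes that $F_i$ is a potential lc place of $X\ni x$.

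The hard part of the plan is to confirm that the dual graph of Piece 1 together with $\bar F_i$ really fits the hypothesis of Lemma \ref{lem: d type is lc not plt}: namely, $\bar F_i$ attaches as a tail extending the long branch of the $D$-configuration, and both singleton-branch divisors of the contracted configuration are $(-2)$-curves. Both points reduce to the structural conclusions of Lemma \ref{lem: Discrepancy of the fork is minimal}(2) together with the standard fact that in the blow-up sequence making $F_i$ appear, $F_{i-1}$ and $F_{i+1}$ are the immediate neighbors of $F_i$. The $E$-type case is just a finite case check against (2.d).
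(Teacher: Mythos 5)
Your proposal is correct and follows essentially the same route as the paper's proof: restrict $E$ to $\Exc(f)$ by Lemma \ref{lem: de type mld reached at minimal resolution}, dispose of the fork via Theorem \ref{thm: de type kc} and Lemma \ref{lem: kc is plc place}, use Lemma \ref{lem: Discrepancy of the fork is minimal}(2.c, 2.d) together with the hypothesis to force the $D$-type case with the two other branches being single $(-2)$-curves (ruling out $E$-type), then extract $E$ and check log canonicity at the two singular points of the extracted model via Lemmas \ref{lem: a type is plt} and \ref{lem: d type is lc not plt}, concluding with Lemma \ref{lem: kc is plc place}(2). The only minor imprecision is the case where $E$ is adjacent to the fork, when your ``Piece 1'' is a $3$-chain rather than a $D$-configuration with $F_0$ as fork, but Lemma \ref{lem: d type is lc not plt} is stated for $m\geq 3$ and still covers this degenerate attachment, so the argument goes through as in the paper.
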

\begin{proof}
Let $f: Y\rightarrow X$ be the minimal resolution of $X\ni x$. By Lemma \ref{lem: de type mld reached at minimal resolution}, $E\subset\Exc(f)$. By Theorem \ref{thm: de type kc}, we may assume that $E$ is not the fork of $\Dd(f)$. Let $L_1,L_2,L_3$ be the three branches of $\Dd(f)$ and assume that $E$ belongs to $L_1$. By Lemma \ref{lem: Discrepancy of the fork is minimal}(2.c,2.d), $X\ni x$ is a $D_m$-type singularity, $L_2$ contains a unique curve $F_2$ and $L_3$ contains a unique curve $F_3$ respectively, such that $F_2^2=F_3^2=-2$. Since $(X\ni x,B)$ is plt and $f$ is the minimal resolution of $X\ni x$, $0<a(E,X,B)\leq 1$, so there exists an extraction $g: W\rightarrow X$ of $E$ with the induced morphism $h: Y\rightarrow W$. Let $E_W$ be the strict transform of $E$ on $W$. By Lemmas \ref{lem: a type is plt} and \ref{lem: d type is lc not plt}, $(W,E_W)$ is lc near $E_W$, so $E$ is a potential lc place of $X\ni x$.
\end{proof}

\begin{thm}\label{thm: de type potential lc place du val}
Let $X\ni x$ be a Du Val singularity of $D_m$-type for some integer $m\geq 4$ or of $E_m$-type for some integer $m\in\{6,7,8\}$. Let $f: Y\rightarrow X$ be the minimal resolution of $X\ni x$, and $E$ be a divisor over $X\ni x$ such that $a(E,X,0)=\mld(X\ni x,0)$, then $E$ is a potential lc place of $X\ni x$ if and only if one of the following holds:
\begin{enumerate}
    \item $E$ is the fork of the $\Dd(f)$.
    \item $X\ni x$ is of $D_m$-type, the dual graph of $X\ni x$ looks like the following, and $E\in\{F_1,\dots,F_{m-2}\}$.
\end{enumerate}
\begin{center}
    \begin{tikzpicture}
                                                    
         \draw (2.1,0) circle (0.1);
         \node [above] at (2.1,0.1) {\footnotesize$F_{1}$};
         \draw[dashed] (2.2,0)--(3.2,0);
         \draw (3.3,0) circle (0.1);
         \node [above] at (3.3,0.1) {\footnotesize$F_{m-3}$};
         \draw (3.4,0)--(3.8,0);

         \draw (3.9,0) circle (0.1);
         \draw (3.9,0.6) circle (0.1);
         \draw (3.9,-0.6) circle (0.1);
         
         \draw (3.9,0.1)--(3.9,0.5);
         \draw (3.9,-0.1)--(3.9,-0.5);
         
         \node [right] at (4,0) {\footnotesize$F_{m-2}$};
         \node [right] at (4,0.6) {\footnotesize$F_{m-1}$};
         \node [right] at (4,-0.6) {\footnotesize$F_{m}$};
    \end{tikzpicture}
\end{center}
\end{thm}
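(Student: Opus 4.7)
The plan is to use the extraction characterization of potential lc places: for $E = F_i \in \Exc(f)$, letting $g\colon W \to X$ be the divisorial extraction of $F_i$ (which exists since $X\ni x$ is canonical) and $E_W = \Exc(g)$, the divisor $E$ is a potential lc place of $X\ni x$ if and only if $(W, E_W)$ is log canonical near $E_W$. One direction is Lemma \ref{lem: kc is plc place}(2); the other follows by pulling back a boundary $G \geq 0$ with $(X, G)$ lc and $a(E, X, G) = 0$ to $W$, obtaining $K_W + g^{-1}_*G + E_W = g^*(K_X+G)$, and applying monotonicity of lc. By Lemma \ref{lem: de type mld reached at minimal resolution} every divisor computing the mld lies in $\Exc(f)$, and since $X\ni x$ is Du Val every exceptional $F_i$ computes the mld, so the task reduces to deciding, for each $i$, whether $(W, E_W)$ is lc near $E_W$.

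For the sufficiency direction, the fork case $E = F_{m-2}$ is handled by Theorem \ref{thm: de type kc} applied with $B = 0$, producing $E$ as a Koll\'ar component, hence a potential lc place by Lemma \ref{lem: kc is plc place}. For case (2), when $X\ni x$ is $D_m$ Du Val and $E = F_i$ with $i \in \{1, \dots, m-3\}$, I factor $f = g \circ h$ and analyze the connected components of $\Exc(f) \setminus \{E\}$, each of which contracts to an isolated singular point of $W$ that $E_W$ may meet. The component $\{F_1, \dots, F_{i-1}\}$ on the tail side (empty when $i = 1$) is an $A$-chain with $E$ attached at a tail, so Lemma \ref{lem: a type is plt} yields plt, hence lc. The fork-side component $\{F_{i+1}, \dots, F_m\}$ has $D$-shaped dual graph with fork $F_{m-2}$, short branches $\{F_{m-1}\}, \{F_m\}$ of length one and self-intersection $-2$, and long branch $\{F_{i+1}, \dots, F_{m-3}\}$; since $E$ attaches to the tail of this long branch, or, in the edge case $i = m-3$ where the long branch is empty and the component is the $A_3$-chain $F_{m-1} - F_{m-2} - F_m$, to its middle vertex $F_{m-2}$ (which corresponds to Lemma \ref{lem: d type is lc not plt} with $m = 3$), the lemma gives lc at this singularity. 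By $D_4$-symmetry the labeling can be permuted, so every non-fork $F_i$ in $D_4$ is covered as well.

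For the necessity direction, I must rule out (i) $X\ni x$ is $D_m$ with $m \geq 5$ and $E \in \{F_{m-1}, F_m\}$, and (ii) $X\ni x$ is $E_m$ with $m \in \{6,7,8\}$ and $E \neq F_{m-2}$. In each such case I extract $E$, identify the dual graph of $\Exc(f) \setminus \{E\}$, and solve the linear system $h^*(K_W + E_W) = K_Y + E + \sum_j b_j F_j$ (using $K_Y \cdot F_j = 0$ since $X\ni x$ is Du Val) to exhibit some coefficient $b_j > 1$, contradicting lc. For case (i), removing $E$ leaves the $A_{m-1}$-chain $F_1 - \dots - F_{m-2} - F_m$ (or its mirror) with $E$ attached to $F_{m-2}$ at position $m-2$; the inverse Cartan matrix of $A_{m-1}$ gives $b_{m-2} = 2(m-2)/m$, which exceeds $1$ precisely when $m \geq 5$. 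For case (ii) I enumerate the finitely many positions of $E$ inside $E_6, E_7, E_8$ and solve the corresponding small linear systems, verifying in each instance that at least one coefficient exceeds $1$.

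The principal obstacle is case (ii) of the necessity direction: no single structural lemma in the style of Lemma \ref{lem: d type is lc not plt} uniformly covers all the configurations produced by extracting a non-fork divisor from an $E_m$ singularity, because the residual singularity on $W$ may be of $A$, $D$, or $E$ type and $E$ attaches at a vertex whose orientation does not match any lemma statement available. Consequently, this step must proceed as a finite but case-by-case Cartan-matrix computation; the computations themselves are routine once set up, as they amount to solving small symmetric linear systems with entries determined by the local dual graph.
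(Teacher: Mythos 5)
Your proposal is correct and follows the same skeleton as the paper: reduce to $E\subset\Exc(f)$, use the equivalence ``$E$ is a potential lc place iff the extraction $(W,E_W)$ is lc near $E_W$'', and prove sufficiency exactly as the paper does, via Lemma \ref{lem: a type is plt} on the chain-side component and Lemma \ref{lem: d type is lc not plt} on the fork-side component (your use of Theorem \ref{thm: de type kc} for the fork itself is an equivalent shortcut). Two points of comparison. First, you spell out the backward direction of the extraction characterization (pulling back $K_X+G$ and using monotonicity), which the paper asserts without proof; that is a genuine, if small, improvement in completeness. Second, for the necessity direction the paper simply invokes the classification of lc surface pairs \cite[Theorem 4.15]{KM98}, whereas you re-derive the needed special cases by hand from the inverse Cartan matrices: your $D_m$ computation is correct (removing $F_{m-1}$ leaves an $A_{m-1}$-chain with $E$ attached at position $m-2$, giving coefficient $2(m-2)/m$, which exceeds $1$ exactly for $m\geq 5$ and equals $1$ for $m=4$, consistent with the $D_4$ symmetry you note), and the $E_6,E_7,E_8$ cases you leave as an asserted finite check do work out (e.g.\ attaching at the short-branch tip of $E_6$ leaves an $A_5$-chain met at its middle vertex, with coefficient $3/2$; attaching at a long tail leaves a $D_5$ configuration met at a spin node, with coefficient $5/4$). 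So your route buys self-containedness at the cost of roughly a dozen small linear systems that you have not displayed; in a written version you should either tabulate those coefficients or do what the paper does and cite \cite[Theorem 4.15]{KM98}, which is precisely the classification encoding these computations. No step of your argument would fail.
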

\begin{proof}
Since $a(E,X,0)=\mld(X\ni x,0)$, $E\subset\Exc(f)$. Since $X\ni x$ is Du Val, for any prime divisor $F\subset\Exc(f)$, $F^2=-2$. The if part follows from Lemmas \ref{lem: a type is plt} and \ref{lem: d type is lc not plt}.

To prove the only if part, we may assume that $E$ is not the fork of $\mathcal{D}(f)$. Then $E$ is a potential lc place of $X\ni x$ if and only if there exists an extraction $g: W\rightarrow X$ of $E$ such that $(W,E_W)$ is lc near $E_W$, where $E_W$ is the strict transform of $E$ on $W$. The theorem follows from \cite[Theorem 4.15]{KM98}.
\end{proof}

\subsection{Non-plt singularities}

\begin{deflem}[{cf. \cite[Theorem 4.7]{KM98}}]\label{deflem: lc surface sing}
Let $X\ni x$ be an lc but not klt surface germ and $f: Y\rightarrow X$ the minimal resolution of $X\ni x$. Then exactly one of the following holds:
\begin{enumerate}
    \item ({\bf B}-type) $\Exc(f)=F$ is a smooth elliptic curve.
    \item ({\bf C}-type) $\Exc(f)=F$ is a nodal cubic curve.
    \item ({\bf F}-type) $\Exc(f)$ is a circle of smooth rational curves.
    \item ({\bf H}-type) $\Exc(f)$ has $\geq 5$ rational curves and $\Dd(f)$ has the following weighted dual graph:
\begin{center}
    \begin{tikzpicture}
                                                    
         \draw (1.5,0.6) circle (0.1);
         \node [left] at (1.4,0.6) {\footnotesize$2$};
         \draw (1.5,0.1)--(1.5,0.5);
         \draw (1.5,0) circle (0.1);
         \draw (1.5,-0.6) circle (0.1);
         \node [left] at (1.4,-0.6) {\footnotesize$2$};
         \draw (1.5,-0.1)--(1.5,-0.5);
         \draw (1.6,0)--(2,0);
         \draw (2.1,0) circle (0.1);
         \draw[dashed] (2.2,0)--(3.2,0);
         \draw (3.3,0) circle (0.1);
         \draw (3.4,0)--(3.8,0);

         \draw (3.9,0) circle (0.1);
         \draw (3.9,0.6) circle (0.1);
         \draw (3.9,-0.6) circle (0.1);
         
         \draw (3.9,0.1)--(3.9,0.5);
         \draw (3.9,-0.1)--(3.9,-0.5);
         
         \node [right] at (4,0.6) {\footnotesize$2$};
         \node [right] at (4,-0.6) {\footnotesize$2$};
    \end{tikzpicture}
\end{center}
\end{enumerate}
Let $m\geq 5$ be an integer. For an lc singularity of {\bf H}-type as above with $m$ exceptional divisors, we call it an \emph{${\bf{H}}_m$-type singularity}.
\end{deflem}

Although the concept of Koll\'ar component is not defined over an lc but not klt germ, the classification of surface lc singularities tells us when there exists a divisor which ``looks like a Koll\'ar component".
\begin{thm}\label{thm: lc type kc}
Let $(X\ni x,B)$ be an lc surface germ such that $X\ni x$ is not klt, and $E$ a prime divisor over $X$ such that $a(E,X,B)=\mld(X\ni x,B)$. Then
\begin{enumerate}
    \item $E$ is a potential lc place of $(X\ni x,B)$. 
    \item $K_Y+E$ is plt near $E$ if and only if $E$ is a {\bf B}-type or an ${\bf{H}}_5$-type singularity.
\end{enumerate}
\end{thm}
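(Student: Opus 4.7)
The plan is to dispatch part (1) immediately and then analyze the four types in Definition-Lemma~\ref{deflem: lc surface sing} for part (2). For part (1), observe that $X\ni x$ not klt gives $\mld(X\ni x,0)=0$, so $\mld(X\ni x,B)\le 0$; combined with lc-ness of $(X\ni x,B)$, this forces $\mld(X\ni x,B)=0$, whence $a(E,X,B)=0$. Taking $G=0$ in Definition~\ref{defn: potential lc place} then exhibits $E$ as a potential lc place.

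For part (2), I read $Y$ as an extraction $g\colon Y\to X$ with $\Exc(g)=E$, which exists because $a(E,X,B)=0\ge 0$ together with a surface MMP on a log resolution of $(X,B)$ containing $E$ yields such a $Y$. Let $f\colon Z\to X$ be the minimal resolution of $X\ni x$ and $h\colon Z\to Y$ the induced map, which contracts every exceptional curve of $f$ except the one carrying $E$. The question is when $(Y,E)$ is plt near $E$, i.e.\ when $E$ is normal in $Y$ and every singular point of $Y$ on $E$ is plt for the pair. For the \emph{if} direction: in B-type, the only exceptional lc place is the elliptic curve $F=\Exc(f)$, so $E=F$, $h$ is the identity, and $(Y,E)=(Z,F)$ is log smooth hence plt. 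In $H_5$-type the minimal resolution is a star with a central vertex $V$ and four $(-2)$-tails; a direct intersection-matrix computation in the spirit of Lemma~\ref{lem: Discrepancy of the fork is minimal} pins down $V$ as the unique lc place, so $E=V$ and $h$ contracts the four disjoint tails to four $A_1$-singularities of $Y$ on $V$, each met transversely by $V$. Lemma~\ref{lem: a type is plt} then gives plt at each of these four points.

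For the \emph{only if} direction: in C-type the only lc place is the nodal rational $F$, which is non-normal, so $(Y,F)$ cannot be plt. In F-type, extracting a single cycle curve $E$ forces $h$ to contract the complementary chain; since $E$ meets this chain at two distinct curves, the two intersection points collapse to one point of $Y$, rendering $E$ non-normal in $Y$. In $H_m$-type for $m\ge 6$, a discrepancy computation based on Lemma~\ref{lem: intersection number compare coefficients} shows any extraction of an lc place $E$ produces a non-klt center of $(Y,E)$ disjoint from the generic point of $E$; concretely, for $H_6$ extracting one fork vertex collapses the other fork with its two $(-2)$-tails into a chain whose central curve has log discrepancy $0$ with respect to $(Y,E)$, and longer H-configurations only make matters worse.

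The main obstacle I anticipate is the case $B\ne 0$, where additional lc places of $(X\ni x,B)$ may arise by blowing up intersection points of $f^{-1}_*B$ with $\Exc(f)$. For such $E$, the extraction $Y$ is forced to contract some component of $\Exc(f)$, and I expect this to produce a singularity on $Y$ (for instance a simple elliptic singularity above a B-type, or a cyclic quotient above an F/H-type) whose discrepancy with respect to $(Y,E)$ exceeds $1$ and hence violates plt; the same intersection-matrix technique used above should handle this uniformly, but carrying out the bookkeeping across all four types is where the real work lies.
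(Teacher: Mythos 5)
Your part (1) is the paper's argument verbatim and is fine. For part (2), however, there are two genuine gaps. First, the case $B\neq 0$ that you defer as ``the main obstacle'' is not an obstacle at all, but as written your proposal leaves it open: since $X\ni x$ is not klt and $(X,B)$ is lc, there is a divisor $F$ over $x$ with $a(F,X,0)=0$, and if $x\in\Supp B$ then $a(F,X,B)<a(F,X,0)=0$, contradicting lc-ness; hence $B=0$ near $x$. The paper makes exactly this one-line reduction, after which no bookkeeping with strict transforms of $B$ is needed. Second, and more seriously, in the ``only if'' direction you only rule out lc places that are curves on the minimal resolution $Z$. But $\mathcal C$ consists of \emph{all} lc places of $(X\ni x,0)$, and there are many others: blowing up the node of the {\bf C}-type curve gives an exceptional divisor of log discrepancy $2-2=0$ (so your claim that the nodal cubic is ``the only lc place'' in {\bf C}-type is false), and likewise blow-ups at the nodes of the {\bf F}-type cycle and at intersection points of adjacent coefficient-one curves in ${\bf H}_m$, $m\geq 6$, produce infinitely many further divisors computing the mld. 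For each of these you must also show the extraction is not plt near $E$; your case analysis never addresses them.

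Both gaps are closed uniformly by the paper's route, which is genuinely different from yours: once $B=0$ and $a(E,X,B)=0$, the extraction $g\colon Y\to X$ of $E$ satisfies $K_Y+E=g^*(K_X+B)$, so by connectedness of the non-klt locus $(Y,E)$ is plt near $E$ if and only if $E$ is the \emph{unique} lc place over $X\ni x$; Definition-Lemma \ref{deflem: lc surface sing} then shows uniqueness holds exactly for {\bf B}-type and ${\bf H}_5$-type, with no extraction-by-extraction discrepancy computations. Your explicit contractions (the four $A_1$ points in the ${\bf H}_5$ case via Lemma \ref{lem: a type is plt}, the non-normality of $E$ in the {\bf F}-type case, the coefficient-one neighbor in ${\bf H}_6$) are correct as far as they go, but to salvage your approach you would need to add the $B=0$ observation and an argument covering all higher lc places, at which point you are essentially re-proving connectedness case by case.
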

\begin{proof}
$E$ is an lc place of $(X\ni x,B)$ which implies (1). Since $(X\ni x,B)$ is lc and $X\ni x$ is not klt, $B=0$. By the connectedness of lc places, $K_Y+E$ is plt near $E$ if and only if $E$ is the only lc place over $X\ni x$, and (2) follows from Definition-Lemma \ref{deflem: lc surface sing}.
\end{proof}

Now we deal with the case when $X\ni x$ is klt but $(X\ni x,B)$ is not plt:

\begin{thm}\label{thm: lc not plt x klt potential lc place}
Let $(X\ni x,B)$ be an lc surface germ such that $X\ni x$ is klt but $(X\ni x,B)$ is not plt. Then any divisor $E$ over $X\ni x$ such that $a(E,X,B)=\mld(X\ni x,B)$ is a potential lc place of $(X\ni x,B)$.
\end{thm}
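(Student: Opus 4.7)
The plan is to show that the hypotheses force $\mld(X\ni x,B)=0$, after which the conclusion is immediate by taking $G=0$ in Definition \ref{defn: potential lc place}.

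First I would establish the lower bound $\mld(X\ni x,B)\ge 0$, which is just the lc hypothesis. For the reverse inequality, I use that in Definition \ref{defn: positivity}, klt requires $a(D,X,B)>0$ for \emph{every} prime divisor $D$ over $X$, while plt only requires this for \emph{exceptional} $D$; hence klt implies plt, so the hypothesis that $(X\ni x,B)$ is not plt forces $(X\ni x,B)$ to be not klt near $x$. Since the non-klt locus of $(X,B)$ is closed and $(X\ni x,B)$ fails to be klt in every neighborhood of $x$, there exists a prime divisor $F$ over $X\ni x$ with $a(F,X,B)\le 0$. The lc hypothesis then pins this down to $a(F,X,B)=0$, yielding $\mld(X\ni x,B)\le 0$.

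Combining these inequalities gives $\mld(X\ni x,B)=0$, so any divisor $E$ computing the mld satisfies $a(E,X,B)=0$. Taking $G=0$ in Definition \ref{defn: potential lc place}, the pair $(X\ni x,B+G)=(X\ni x,B)$ is lc (by hypothesis) and $a(E,X,B+G)=0$, hence $E$ is a potential lc place of $(X\ni x,B)$.

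No serious obstacle is expected; the only technical subtlety is verifying that ``not plt near $x$'' produces a divisor with center containing $x$ (as opposed to a divisor whose center only accumulates at $x$), and this follows from the fact that the non-klt locus is closed — equivalently, it is a finite union of closures of centers of lc places extracted on any single log resolution of $(X,B)$. The klt hypothesis on $X\ni x$ is not actually used in the argument and serves only to distinguish this case from the ``$X\ni x$ not klt'' case handled separately.
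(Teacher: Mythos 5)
Your overall strategy (prove $\mld(X\ni x,B)=0$ and then take $G=0$ in Definition \ref{defn: potential lc place}) is exactly the paper's one-line proof, and that last step is fine. The gap is in how you get $\mld(X\ni x,B)=0$. You weaken ``not plt'' to ``not klt'' and then argue that, since the non-klt locus is closed and meets every neighborhood of $x$, there is a prime divisor $F$ over $X\ni x$ with $a(F,X,B)\le 0$. But in Definition \ref{defn: mld and alct} a ``prime divisor over $X\ni x$'' has center equal to the closed point $x$, whereas $x$ lying in the non-klt locus only produces a divisor whose center \emph{contains} $x$ --- typically a coefficient-one component of $B$ through $x$ --- and such a divisor does not enter $\mld(X\ni x,B)$. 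Indeed the implication ``$x$ lies in the non-klt locus $\Rightarrow \mld(X\ni x,B)=0$'' is false: for $(\mathbb{A}^2\ni 0,\,L)$ with $L$ a line through $0$, the non-klt locus is $L\ni 0$, yet $\mld(\mathbb{A}^2\ni 0,L)=1$. That pair is plt, so it does not contradict the theorem, but it breaks your chain of implications precisely at the step where you discarded the extra strength of the plt hypothesis; your closing remark that the subtlety is only to get a center ``containing $x$'' is exactly the insufficient version of the statement you need.

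The repair is to use the failure of plt directly, which is what the paper's proof implicitly does: not plt near $x$ provides an \emph{exceptional} divisor over $X$ with $a\le 0$, and on a surface an exceptional divisor over $X$ has zero-dimensional center, i.e.\ its center is a closed point. On a single log resolution $\phi\colon W\to X$ with $K_W+B_W=\phi^*(K_X+B)$, the exceptional divisors over $X$ with nonpositive log discrepancy have centers among finitely many closed points of $X$ (the images of the $\phi$-exceptional coefficient-one divisors and of the intersection points of coefficient-one components of $B_W$); since plt fails on every neighborhood of $x$, one of these points must be $x$ itself. Hence there is an exceptional divisor with center exactly $x$ and, by the lc hypothesis, log discrepancy $0$, so $\mld(X\ni x,B)=0$. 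With this step repaired, your conclusion ($a(E,X,B)=0$, take $G=0$, so $E$ is an lc place and hence a potential lc place) goes through and coincides with the paper's argument; you are also right that the klt assumption on $X\ni x$ plays no role here beyond separating cases.
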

\begin{proof}
Since $(X\ni x,B)$ is not plt, $a(E,X,B)=\mld(X\ni x,B)=0$, so $E$ is an lc place of $(X\ni x,B)$, hence a potential lc place of $(X\ni x,B)$.
\end{proof}

\begin{thm}\label{thm: lc not klt kc}
Let $(X\ni x,B)$ be an lc surface germ such that $X\ni x$ is klt but $(X\ni x,B)$ is not plt. Then there exists a divisor $E$ over $X\ni x$ such that $a(E,X,B)=\mld(X\ni x,B)=0$ and $E$ is Koll\'ar component of $X\ni x$.
\end{thm}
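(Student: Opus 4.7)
The plan is to reduce to the plt case by perturbing $B$, apply Theorems \ref{thm: a type kc} and \ref{thm: de type kc}, and pass to the limit. For $\epsilon \in (0, 1)$, set $B_\epsilon := (1-\epsilon) B$. Since $X \ni x$ is klt and $(X, B)$ is lc, for every divisor $F$ over $X \ni x$ we have $a(F, X, B_\epsilon) = a(F, X, B) + \epsilon \cdot \mult_F B > 0$ (either $a(F, X, B) > 0$, or $a(F, X, B) = 0$ forces $\mult_F B = a(F, X, 0) > 0$), so $(X, B_\epsilon)$ is klt and in particular plt. Fixing any lc place $E_0$ of $(X, B)$, the identity $a(E_0, X, B_\epsilon) = \epsilon \cdot a(E_0, X, 0)$ yields $\mld(X \ni x, B_\epsilon) \leq \epsilon \cdot a(E_0, X, 0) \to 0$ as $\epsilon \to 0^+$.

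\textbf{Case 1 ($D$-type or $E$-type).} By Theorem \ref{thm: de type kc} applied to the plt pair $(X \ni x, B_\epsilon)$, the unique fork $F_1$ of the dual graph of the minimal resolution $f: Y \to X$ is simultaneously a Kollár component of $X \ni x$ and the unique divisor computing $\mld(X \ni x, B_\epsilon)$. Since $F_1$ depends only on $X \ni x$ and not on $\epsilon$, letting $\epsilon \to 0^+$ in the linear identity $a(F_1, X, B_\epsilon) = a(F_1, X, B) + \epsilon \cdot \mult_{F_1} B = \mld(X \ni x, B_\epsilon)$ forces $a(F_1, X, B) = 0$. Thus $F_1$ is the required Kollár component.

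\textbf{Case 2 (smooth or $A$-type).} For each $\epsilon > 0$, choose $E_\epsilon$ computing $\mld(X \ni x, B_\epsilon)$. By Theorem \ref{thm: a type kc} (resp.\ Theorem \ref{thm: Kaw17 1.1}), $E_\epsilon$ is a Kollár component of $X \ni x$, but $E_\epsilon$ is not a priori constant in $\epsilon$. The inequality $a(E_\epsilon, X, B_\epsilon) \leq \epsilon \cdot a(E_0, X, 0)$ rearranges to $a(E_\epsilon, X, 0) \leq 2 a(E_0, X, 0)$ for $\epsilon \in (0, 1]$. Invoking the classification of Kollár components on smooth and $A$-type surface germs (for smooth germs these are exceptional divisors of weighted blow-ups with coprime weights $(p, q)$ and log discrepancy $p + q$, and an analogous discrete parametrization for $A$-type can be read off via Lemma \ref{lem: a type is plt} applied to the chain structure of the minimal resolution), only finitely many Kollár components of $X \ni x$ have log discrepancy bounded by any fixed constant. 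A pigeonhole argument along a sequence $\epsilon_n \to 0^+$ then yields a single divisor $E$ with $E = E_{\epsilon_n}$ for infinitely many $n$, and $0 \leq a(E, X, B) \leq \epsilon_n \cdot a(E_0, X, 0) \to 0$ gives $a(E, X, B) = 0$.

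\textbf{Main obstacle.} I expect the technical heart to be the finiteness step in Case 2. If extracting it from the preceding lemmas proves awkward, an alternative is to select $E_0$ to minimize $a(E_0, X, 0)$ among lc places of $(X, B)$ and to show directly that this $E_0$ itself achieves $\mld(X \ni x, B_\epsilon)$ for all sufficiently small $\epsilon > 0$; this reduces matters to the discreteness statement $\inf\{a(F, X, B) : F \text{ a non-lc place of } (X, B)\} > 0$ for the fixed lc surface pair $(X, B)$.
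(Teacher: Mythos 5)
Your overall strategy---scale $B$ to $B_\epsilon:=(1-\epsilon)B$ so that the pair becomes plt, extract a Koll\'ar component computing $\mld(X\ni x,B_\epsilon)$ via Theorems \ref{thm: a type kc} and \ref{thm: de type kc}, and let $\epsilon\to 0^+$---is the same as the paper's. Case 1 is correct (though in the paper it is not needed as a separate case). The genuine gap is in Case 2: the claim that only finitely many Koll\'ar components of $X\ni x$ have log discrepancy bounded by a fixed constant is false already for $X\ni x$ smooth. For instance, over $\mathbb{A}^2\ni 0$, blow up the origin and then any point $p$ of the exceptional curve: the second exceptional divisor is, in coordinates adapted to $p$, extracted by a $(1,2)$-weighted blow-up, hence is a Koll\'ar component, and it has $a(E,X,0)=3$; varying $p$ gives an infinite one-parameter family. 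The weights $(p,q)$ are bounded by the log discrepancy, but the coordinate system is not, so the pigeonhole step has nothing finite to pigeonhole into and Case 2 does not close as written.

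The repair is exactly the fallback you sketch in your ``main obstacle'' paragraph, and it is precisely the paper's proof: set $\delta:=\min\{a(F,X,B)\mid F\text{ over }X\ni x,\ a(F,X,B)>0\}$, choose $\epsilon$ with $0<\mld(X\ni x,B_\epsilon)<\delta$, and let $E$ be a Koll\'ar component of $X\ni x$ computing $\mld(X\ni x,B_\epsilon)$, which exists by Theorems \ref{thm: a type kc} and \ref{thm: de type kc} since $(X\ni x,B_\epsilon)$ is plt. Then $0\leq a(E,X,B)\leq a(E,X,B_\epsilon)<\delta$, so by the definition of $\delta$ necessarily $a(E,X,B)=0$. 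This makes both the case division and any limiting or pigeonhole argument unnecessary; the only input beyond the two cited theorems is the positivity (attainment) of $\delta$, i.e.\ the discreteness statement you correctly isolated but left unproved---as, to be fair, does the paper, which simply writes the minimum; for a fixed lc surface germ it is standard, e.g.\ on a log resolution a point blow-up on divisors of log discrepancies $a_1,a_2\geq 0$ produces log discrepancy $a_1+a_2$ (resp.\ $a_1+1$), so positive log discrepancies cannot accumulate at $0$.
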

\begin{proof}
Let $$\delta:=\min\{a(E,X,B)\mid E\text{ is over }X\ni x, a(E,X,B)>0\}.$$
Then there exists $\epsilon\in (0,1)$ such that $\delta>\mld(X\ni x,(1-\epsilon)B)>0$. By Theorems \ref{thm: a type kc} and \ref{thm: de type kc}, there exists a divisor $E$ over $X\ni x$ such that $a(E,X,(1-\epsilon)B)=\mld(X\ni x,(1-\epsilon)B)<\delta$ and $E$ is a Koll\'ar component of $X\ni x$. Since 
$$0\leq a(E,X,B)<a(E,X,(1-\epsilon)B)<\delta,$$
$a(E,X,B)=\mld(X\ni x,B)=0$, so $E$ satisfies our requirements.
\end{proof}

Finally, recall the following result:
\begin{thm}\label{thm: surface dlt is log smooth}
Let $(X\ni x,B)$ be a dlt surface germ that is not plt. Then $(X\ni x,B)$ is log smooth and $B=\lfloor B\rfloor$ has exactly two irreducible components near $x$.
\end{thm}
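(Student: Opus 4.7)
The plan is to locate an exceptional valuation of log discrepancy $0$ over $x$ and read off the local structure from a crepant pullback to a log resolution. First, from the hypothesis that $(X\ni x,B)$ is dlt but not plt, I would extract a prime divisor $E$ exceptional over $X$, with center containing $x$, such that $a(E,X,B)\le 0$; since dlt implies lc, this forces $a(E,X,B)=0$.

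Next, I would fix a log resolution $f\colon Y\to X$ provided by the dlt hypothesis, so that every $f$-exceptional prime $F_i$ satisfies $a_i:=a(F_i,X,B)>0$. Because every $f$-exceptional divisor has strictly positive log discrepancy while $a(E,X,B)=0$, the valuation $E$ cannot coincide with any $F_i$, so $E$ is not extracted by $f$. Since $Y$ is a surface, the center of $E$ on $Y$ is then a closed point $y$ lying above $x$.

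Now I would invoke the crepant pullback $K_Y+B_Y+\sum_i (1-a_i)F_i=f^*(K_X+B)$ with $B_Y:=f^{-1}_*B$, a pair that is log smooth near $y$. The equality $a(E,X,B)=0$ means the mld at $y$ of this pair is zero. On a log smooth surface pair, vanishing mld at a point forces at least two coefficient-$1$ components to meet transversely there; since all $F_i$ have coefficient $<1$ by dlt, the two such components must be strict transforms of two distinct components $C_1,C_2$ of $\lfloor B\rfloor$. The snc condition on a surface forbids a third branch at $y$, so no $F_i$ passes through $y$; hence $f$ is an isomorphism near $y$, and $x=f(y)$ is a smooth point of $X$ at which $C_1$ and $C_2$ meet transversely.

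Finally, I would rule out any further component of $B$ through $x$. If a positive-coefficient component of $B$ other than $C_1,C_2$ passed through $x$, then $\mult_xB>2$, so Lemma \ref{lem: surface blow-up a divisor is 2-multiplicity} applied to the smooth blow-up of $x$ would give $a(E_0,X,B)=2-\mult_xB<0$, contradicting lc. Hence $B=\lfloor B\rfloor=C_1+C_2$ near $x$, and $(X\ni x,B)$ is log smooth with exactly two components of $B$ through $x$. The main obstacle is the first step: one must be sure that the "not plt" hypothesis at the germ really produces an exceptional divisor whose center is $x$ (and not merely a nearby point); this is handled by observing that plt-failure must persist in every shrinking neighborhood of $x$, so some such divisor must have center at $x$ itself.
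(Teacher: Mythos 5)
Your proof is correct and follows essentially the same route as the paper's: produce an exceptional divisor of log discrepancy $0$ centered at $x$, use dlt-ness to force $x$ into the snc locus of $(X,B)$, and then count the coefficient-one components through $x$. The only difference is one of detail: the paper directly invokes the fact that the center of such a divisor lies in the log smooth strata of a dlt pair, whereas you re-derive this from the log-resolution characterization via crepant pullback, the mld computation at a point of an snc surface pair (which can also be justified by Lemma \ref{lem: interseciton<1 imply mld does not need blow up}), and the blow-up multiplicity bound of Lemma \ref{lem: surface blow-up a divisor is 2-multiplicity}.
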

\begin{proof}
Since $(X\ni x,B)$ is dlt but not plt, $\lfloor B\rfloor$ contains at least $2$ irreducible components near $x$. Since $X$ is a surface, $B=\lfloor B\rfloor$ has exactly two irreducible components near $x$.

There exists a divisor $E$ over $X\ni x$ such that $a(E,X,B)=0$. Since $(X\ni x,B)$ is dlt, $x:=\Center_XE$ belongs to the log smooth strata of $(X,B)$, so $(X\ni x,B)$ is log smooth. 
\end{proof}

\section{Proof of the main theorems}

\begin{proof}[Proof of Theorem \ref{thm: classification divisors computing mld on surfaces}]
(1.a) follows from Theorems \ref{thm: Kaw17 1.1}, \ref{thm: a type kc}, and \ref{thm: surface dlt is log smooth}. For (1.b), by Theorem \ref{thm: surface dlt is log smooth}, $(X\ni x,B)$ is plt. (1.b.i) follows from Theorem \ref{thm: de type kc}. (1.b.ii) follows from Lemma \ref{lem: de type mld reached at minimal resolution}. (1.b.iii.A) follows from Theorem \ref{thm: de type potential lc place not du val}. (1.b.iii.B) follows from Lemma \ref{lem: Discrepancy of the fork is minimal} and (1.b.ii). (1.b.iv.A) is the classification of surface Du Val singularities. (1.b.iv.B) and (1.b.iv.C) follow from Theorem \ref{thm: de type potential lc place du val}. (2.a) follows from Theorem \ref{thm: lc not plt x klt potential lc place}. (2.b) follows from Theorem \ref{thm: lc not klt kc}. (2.c) follows from Theorem \ref{thm: Kaw17 1.1}. (3) follows from Theorem \ref{thm: lc type kc}.
\end{proof}

\begin{proof}[Proof of Theorem \ref{thm: dim 2 compute mld is kc}]
It follows from Theorem \ref{thm: classification divisors computing mld on surfaces}(1.a, 1.b.i, 2.b).
\end{proof}

\begin{proof}[Proof of Theorem \ref{thm: when all divisors computing mld are potential lc places}]
It follows from Theorem \ref{thm: classification divisors computing mld on surfaces}(1.a, 1.b.iii.A, 1.b.iv.B, i.b.iv.C, 2.a, 3).
\end{proof}

\section{Examples}

The following example is given by Zhuang which shows that Question \ref{ques: div compute mld is kc} does not have a general positive answer in dimension $\geq 3$ even when $B=0$. We are grateful for him sharing the example with us.

\begin{ex}[{c.f. \cite[Excercise 41]{Kol08}}]\label{ex: threefold mld not kc}
Consider the threefold singularity given by 
$$(x^3+y^3+z^3+w^4=0)\subset\mathbb (\mathbb C^4\ni 0).$$
This is a canonical singularity, and the only divisor $E$ which computes the mld is attained at the ordinary blow-up. However, $E$ is a cone over an elliptic curve, so $E$ is lc but not klt. In particular, $E$ is not a Koll\'ar component of the ambient variety.
\end{ex}

The following example of Kawakita shows that there may not exist a divisor computing $\mld(X\ni x,B)$ that is also a potential place of $(X\ni x,B)$ even when $X$ is a smooth surface. We remark that Theorem \ref{thm: dim 2 compute mld is kc} shows that there always exists a divisor computing $\mld(X\ni x,B)$ that is a Koll\'ar component of $(X\ni x,0)$.

\begin{ex}[{cf. \cite[Example 2]{Kaw17}}]
Let $D:=(x_1^2+x_2^3+rx_1x_2^2=0)\subset\mathbb A^2$ for some general real number $r$, and $B:=\frac{2}{3}D$. Then there exists a unique divisor $E$ over $\mathbb A^2\ni 0$ such that $\mld(\mathbb A^2\ni 0,B)=a(E,X,B)=\frac{2}{3}$. However, $E$ is not a potential place of $(\mathbb A^2\ni 0,B)$. 
\end{ex}

The following example is complementary to Theorem \ref{thm: a type kc}, which shows that the assumption ``$(X\ni x,B)$ is plt" is necessary.

\begin{ex}\label{ex: not plt a type may not be kc}
Let $Z\ni z$ be a $D_4$-type Du Val singularity with the following dual graph:
\begin{center}
    \begin{tikzpicture}
                                                    
         \draw (3.3,0) circle (0.1);
         \node [above] at (3.3,0.1) {\footnotesize$C$};
         \draw (3.4,0)--(3.8,0);

         \draw (3.9,0) circle (0.1);
         \draw (3.9,0.6) circle (0.1);
         \draw (3.9,-0.6) circle (0.1);
         
         \draw (3.9,0.1)--(3.9,0.5);
         \draw (3.9,-0.1)--(3.9,-0.5);
         
         \node [right] at (4,0) {\footnotesize$F_1$};
         \node [right] at (4,0.6) {\footnotesize$F_2$};
         \node [right] at (4,-0.6) {\footnotesize$F_3$};
    \end{tikzpicture}
\end{center}
Let $g: X\rightarrow Z$ be the extraction of $C$. Then $X$ has a unique singularity $x\in C$ such that $X\ni x$ is an $A_3$-type Du Val singularity. Let $f: Y\rightarrow X$ be the minimal resolution of $(X\ni x,C)$ and $C_Y:=f^{-1}_*C$. Then 
$$K_Y+C_Y+F_1+\frac{1}{2}F_2+\frac{1}{2}F_3=f^*(K_X+C).$$
In particular, let $h: W\rightarrow Y$ be the smooth blow-up of $C_Y\cap F_1$ with the exceptional divisor $E$. Then $a(E,X,C)=\mld(X\ni x,C)=0$, but by \cite[Theorem 4.15(2)]{KM98}, $E$ is not a Koll\'ar component of $X\ni x$.
\end{ex}

The last example is complementary to Theorem \ref{thm: de type kc} and \ref{thm: de type potential lc place du val}, which shows that even for non-Du Val singularities of $D$-type and $B=0$, it is possible that some divisor which computes the minimal log discrepancy is not a Koll\'ar component.
\begin{ex}\label{ex: not du val d type may not be kc}
Let $G:=\text{BD}_{12}(5,3)$ be a binary dihedral group in $\text{\rm GL}(2,\mathbb C)$. By \cite[Table 3.2]{dC08}, the quotient singularity $X\ni x\cong\mathbb C^2/G\ni 0$ is a $D_4$-type singularity and its minimal resolution $f: Y\rightarrow X$ has the following dual graph:
\begin{center} 
    \begin{tikzpicture}
                                                    
         \draw (3.3,0) circle (0.1);
         \node [above] at (3.3,0.1) {\footnotesize$F_1$};
         \draw (3.4,0)--(3.8,0);

         \draw (3.9,0) circle (0.1);
         \draw (3.9,0.6) circle (0.1);
         \draw (3.9,-0.6) circle (0.1);
         
         \draw (3.9,0.1)--(3.9,0.5);
         \draw (3.9,-0.1)--(3.9,-0.5);

         \node [right] at (4,0) {\footnotesize$F_2$};
         \node [right] at (4,0.6) {\footnotesize$F_3$};
         \node [right] at (4,-0.6) {\footnotesize$F_4$};

    \end{tikzpicture}
\end{center}
where $F_1^2=-3$ and $F_i^2=-2$ for $i\in\{2,3,4\}$. THus $a(F_1,X,0)=\frac{1}{2}=\mld(X\ni x,0)$, but by Theorem \ref{thm: de type kc}, $F_1$ is not a Koll\'ar component of $X\ni x$.
\end{ex}

\end{document}